\documentclass{amsart}
\usepackage{geometry}
\geometry{left=3cm, right=3cm, top=2.5cm, bottom=2.5cm, footskip=1cm}
\usepackage{amsthm,amssymb,amsmath,hyperref}
\usepackage{graphicx}
\usepackage{color}
\usepackage{tikz}
\usetikzlibrary{arrows.meta}
\usepackage[utf8]{inputenc}
\usepackage[T1]{fontenc}
\allowdisplaybreaks


\vfuzz2pt 
\hfuzz2pt 
\theoremstyle{plain}
\newtheorem{thm}{Theorem}[section]

\newtheorem{cor}[thm]{Corollary}
\newtheorem{lem}[thm]{Lemma}
\newtheorem{prop}[thm]{Proposition}

\theoremstyle{definition}
\newtheorem{defn}[thm]{Definition}

\theoremstyle{remark}
\newtheorem{rem}[thm]{Remark}

\theoremstyle{plain}

\numberwithin{equation}{section}


\newcommand{\B}{{\mathbb B}}

\newcommand{\R}{{\mathbb R}}

\newcommand{\Z}{{\mathbb Z}}

\newcommand{\calF}{{\mathcal F}}

\newcommand{\calS}{{\mathcal S}}
\newcommand{\calT}{{\mathcal T}}

\def\udot#1{\ifmmode\oalign{$#1$\crcr\hidewidth.\hidewidth
    }\else\oalign{#1\crcr\hidewidth.\hidewidth}\fi}

\def\R{\mathbb{R}}
\def\Z{\mathbb{Z}}
\def\T{\mathbb{T}}

\renewcommand{\parallel}{\mathrel{/\mkern-5mu/}}
\makeatletter
\newcommand{\notparallel}{%
  \mathrel{\mathpalette\not@parallel\relax}%
}
\newcommand{\not@parallel}[2]{%
  \ooalign{\reflectbox{$\m@th#1\smallsetminus$}\cr\hfil$\m@th#1\parallel$\cr}%
}

\begin{document}

\title[Unstable Cahn-Hilliard in 2D with shear]{Global Existence for the unstable Cahn-Hilliard equation in 2D with a Shear Flow}
\author{Bingyang Hu, Dinghua Xu, Yeyu Zhang}

\address{Bingyang Hu: Department of Mathematics and Statistics, Auburn University, 211 Parker Hall, Auburn, AL 36849, U.S.A.}%
\email{bzh0108@auburn.edu}

\address{Dinghua Xu: School of Mathematics, Shanghai University of Finance and Economics, Shanghai, 200433, PR China; 
School of Science, Zhejiang Sci-Tech University, Hangzhou,  310023, PR China}%
\email{dhxu6708@mail.shufe.edu.cn; dhxu6708@zstu.edu.cn}

\address{Yeyu Zhang: School of Mathematics, Shanghai University of Finance and Economics, Shanghai, 200433, PR China}%
\email{zhangyeyu@mail.shufe.edu.cn}

\begin{abstract}

In this paper, we study the advective unstable Cahn--Hilliard equation on $\T^2$ with shear flow:
\begin{equation*}
    \begin{cases}
u_t+Av_1(y) \partial_x u+\varepsilon \Delta^2 u= \Delta(a u^3+ b u^2)  \quad & \quad  \textrm{on} \quad \T^2;  \\
\\
u \ \textrm{periodic} \quad & \quad \textrm{on} \quad  \partial \T^2,
\end{cases}
\end{equation*}
where $u_0\in H_0^2(\T^2)$, $A,\varepsilon>0$, $a<0$, and $b\in\R$. The condition $a<0$ puts the model in an unstable phase-field regime: the nonlinear chemical potential may amplify, rather than restore, concentration fluctuations, as in spinodal decomposition. The shear term $Av_1(y)\partial_xu$ models imposed stirring along the shear direction; through mixing, it enhances dissipation and counteracts the growth driven by the unstable cubic term $\Delta(au^3)$. Assuming that the shear profile has finitely many critical points and that linearly growing modes occur only in the shear direction, we prove that the $L^2$-energy converges exponentially to zero, provided $|a|$ and $\|\int_\T u_0(x,\cdot)\,dx\|_{L_y^2}$ are sufficiently small.

\end{abstract}
\date{\today}

\thanks{}

\maketitle
\tableofcontents

\section{Introduction}

In this paper, we are interested in the following initial boundary value problem of the non-linear Cahn-Hilliard equation with advection:
\begin{equation} \label{maineq}
\begin{cases}
u_t+{\bf v} \cdot \nabla u+\varepsilon \Delta^2 u=\Delta(a u^3+bu^2+c u)  \quad & \quad  \textrm{on} \quad \T^2;  \\
\\
u \ \textrm{periodic} \quad & \quad \textrm{on} \quad  \partial \T^2, 
\end{cases}
\end{equation}
with initial data $u(x, 0)=u_0(x) \in H^2(\T^2)$. Here  $\varepsilon>0$, $a, b, c \in \R$ and ${\bf v}$ is an incompressible flow on $\T^2$. In the case when ${\bf v}=0$, \eqref{maineq} becomes the classical Cahn-Hilliard equations, which arises in the study of phase separation in cooling binary solutions such as alloys, glasses and polyer mixtures (see, e.g.,  \cite{CH58, Cahn61}). More broadly, phase-change phenomena coupled with transport and wave effects also appear in other fluid-dynamical settings, including moist atmospheric dynamic (see, e.g.,  \cite{ZSS21a,ZSS21b,ZSS22}). It is well-known that if $a>0$, then the solution to the problem \eqref{maineq} without advection spontaneously forms domains separated by thin transition regions (see, e.g., \cite{ES86, Ell89, Peg89}). In the phase-field interpretation, this corresponds to a stable bulk free-energy landscape with preferred phases, so that the dynamics tends to separate the mixture into nearly pure phases. By contrast, when $a<0$, the nonlinear chemical potential has a destabilizing sign (see, e.g., \cite{Cahn65, LeeKimJeongShin14}). In the phase-field interpretation, this corresponds to an unstable free-energy landscape in which concentration fluctuations may be amplified rather than restored, as in the classical spinodal-decomposition mechanism. This places the equation in an unstable regime, in which the solution in general blows up (see, e.g., \cite{ES86, GV02}). Therefore, a natural question that one can ask is the following: \emph{can either the phase separation or the blow-up be suppressed if we add an extra advection term?}

\vspace{0.1cm}

In \cite{FFIT19}, the authors studied the global existence for the problem \eqref{maineq} with $a=1, b=0$ and $c=-1$ (which is a typical case for the \emph{stable} case) under the assumption that the  \emph{dissipation time} of ${\bf v}$ is sufficiently small. As a conclusion, they showed that the solution to \eqref{maineq} converges exponentially to the total concentration 
\begin{equation} \label{mean0cond}
M:=\int_{\T^2} u_0 dxdy=\int_{\T^2} udxdy
\end{equation}
(note that \eqref{maineq} conserves mean of the solution), and in particular, they showed that  even in the situation when $\varepsilon>0$ is sufficiently small, no phase separation occurs if the stirring velocity field is sufficiently mixing. Here the dissipation time of ${\bf v}$ is defined as follows.

\begin{defn}
Consider the hyper-diffusion equation on $\T^2 \times (0, \infty)$. 
\begin{equation} \label{hyper-diffusion}
\theta_t+{\bf v} \cdot \nabla \theta+\varepsilon \Delta^2 \theta=0.
\end{equation}
Let $S_{s, t}$ be the solution operator to \eqref{hyper-diffusion}, that is for any $f \in L^2(\T^2)$, the function $\theta(t)=S_{s, t} f$ solves \eqref{hyper-diffusion} with initial data $\theta(s)=f$, and periodic boundary conditions. The \emph{dissipation time} of ${\bf v}$ is
$$
\tau_{\textrm{dis}}:=\inf \left\{ t \ge 0 \bigg | \left\|S_{s, s+t}\right\|_{L^2_0 \to L^2_0} \le \frac{1}{2} \quad \textrm{for all} \quad s \ge 0 \right\},  
$$
where $L_0^2(\T^2)$ is the collection of all square integrable functions on $\T^2$ with mean zero. 
\end{defn}

In this paper, we are interested in whether one can apply a similar idea in \cite{FFIT19} to study the \textit{unstable} case, in which, the constant $a$ in \eqref{maineq} is strictly negative, and it turns out the answer is \emph{affirmative} if in addition, we assume $|a|$ is sufficiently small. Note that the major difference in this case is that the double well potential no longer has a global lower bound, and hence one has to handle the term $\Delta(u^3)$ properly. 

Now we turn to some details. Let us consider the problem \eqref{maineq} with $a<0, b \in \R, c=0$ and when $v$ is given by a \emph{shear flow}, which is of the form
$$
{\bf v}=\begin{pmatrix}
Av_1(y) \\
0 
\end{pmatrix},
$$
where $v_1 \in W^{1, \infty}(\T)$ and $A>0$ is its amplitude. Namely, we consider the problem
\begin{equation} \label{maineq01}
\begin{cases}
u_t+Av_1(y) \partial_x u+\varepsilon \Delta^2 u= \Delta(a u^3+ b u^2)  \quad & \quad  \textrm{on} \quad \T^2;  \\
\\
u \ \textrm{periodic} \quad & \quad \textrm{on} \quad  \partial \T^2, 
\end{cases}
\end{equation}
with $a<0, b \in \R$ and initial data $u(x, 0)=u_0(x) \in H_0^2(\T^2)$, which is the collection of all $H^2(\T^2)$-functions with mean zero. Our goal is to show the global existence of solutions to the problem \eqref{maineq02} under certain proper assumption on $v$. The study of suppression of blow-up in nonlinear parabolic equations is a recent
research topic, and we refer the interested reader to \cite{BH17, CDFM21, FHXZ21, He18} and the references therein
for more detail.

We first note that by a rescaling argument, we can rewrite \eqref{maineq01} as 
\begin{equation} \label{maineq02}
\begin{cases}
u_t+v_1(y) \partial_x u+\varepsilon \gamma \Delta^2 u= \gamma \Delta(a u^3+ b u^2)  \quad & \quad  \textrm{on} \quad \T^2;  \\
\\
u \ \textrm{periodic} \quad & \quad \textrm{on} \quad  \partial \T^2, 
\end{cases}
\end{equation}
where $\gamma=A^{-1}$. Next, we turn to the assumption on the flow ${\bf v}$, which is now ${\bf v}=\begin{pmatrix}
v_1(y) \\
0
\end{pmatrix}$ after rescaling. The key assumption here is that the linear operator
\begin{equation} \label{20210822eq05}
H_\gamma=H_{\varepsilon, \gamma}:=\varepsilon \gamma \Delta^2+v_1(y) \partial_x
\end{equation} 
is \emph{dissipation enhancing} (see, e,g., \cite{CKRZ08, CDE20, FI19}) on the orthogonal component to the kernel of the transport operator $v_1(y)\partial_x$. More precisely, for any $g \in L^2(\T^2)$, we decompose it as 
$$
\langle g \rangle(t, y):=\int_\T g(t, x, y)dx \quad \textrm{and} \quad g_{\notparallel}(t, x, y)=g(t, x, y)-\langle g \rangle (t, y),
$$
where $\langle g \rangle$ is the \emph{average component} and $g_{\notparallel}$ is the \emph{fluctuation component}. It is easy to see that 
$$
\langle g \rangle \in \textrm{Ker} \left(v_1(y) \partial_x \right) \quad \textrm{and} \quad g_{\notparallel} \in \left(  \textrm{Ker} \left(v_1(y) \partial_x \right) \right)^{\perp}
$$

\begin{defn} \label{defn01}
We say the shear flow ${\bf v} =\begin{pmatrix} 
v_1(y) \\
0
\end{pmatrix}$ is a \emph{horizontal polynomial mixing shear flow} if there exists some global constant $C_1>0$ and $m \ge 2$, such that
\begin{equation} \label{20210822eq01}
\left\| e^{-(v_1 \partial_x)t} g_{\notparallel} \right\|_{H^{-1}} \le \frac{C_1}{(1+t)^m} \|g_{\notparallel}\|_{H^1}, \quad t \ge 0,
\end{equation} 
for any $g \in L_0^2(\T^2)$. 
\end{defn}

\begin{rem}
The constant $m$ in Definition \ref{defn01} is closely related to the flow function $v_1$. For example, it has been shown in \cite{BC17} that if $u$ has a finite number of critical points of order at most $m \ge 2$, namely at most $m-1$ derivatives vanish at the critical points, then there exists some constant $C_1>0$, such that the estimate \eqref{20210822eq01} holds. 
\end{rem}

As a consequence of \eqref{20210822eq01}, we have the following result. 

\begin{prop}{\cite[Theorem 2.1]{CDE20}} \label{20210823prop01}
For each $\gamma>0$ and ${\bf v}=\begin{pmatrix}
v_1(y) \\
0
\end{pmatrix}$ be a horizontal polynomial mixing shear flow. Then for any $g \in L^2(\T^2)$ with mean zero, one has for any $t \ge 0$,
\begin{equation} \label{20210822eq02}
\left\|e^{-t H_\gamma} g_{\notparallel}\right\|_{L^2} \le 10 e^{-\lambda_\gamma  t} \|g_{\notparallel}\|_{L^2}, \quad \lambda_\gamma=\lambda_{\varepsilon, \gamma}:=C_{2, \varepsilon} \gamma ^{\frac{2}{2+m}},
\end{equation}
where $C_{2, \varepsilon}>0$ is an absolute constant independent of the choice of $\gamma$, and only depending on $\varepsilon$, $C_1$, $m$  (which are defined in \eqref{20210822eq01}) and any dimensional constants. 
\end{prop}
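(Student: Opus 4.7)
The proposition is Theorem 2.1 of \cite{CDE20} specialized to the operator $H_\gamma$, and the plan is to outline its proof in the present shear setting. The heuristic is that combining the polynomial mixing of the transport with the parabolic dissipation of $\varepsilon\gamma\Delta^2$ produces an enhanced decay rate, with the exponent $2/(2+m)$ emerging from the sharp balance between these two mechanisms. Concretely, I would aim to exhibit a ``dissipation time'' $\tau_\gamma = c_\varepsilon \gamma^{-2/(2+m)}$ and a number $q \in (0,1)$ independent of $\gamma$ such that
$$
\|e^{-\tau_\gamma H_\gamma} g_{\notparallel}\|_{L^2} \le q\, \|g_{\notparallel}\|_{L^2},
$$
and then iterate over consecutive windows of length $\tau_\gamma$ to obtain \eqref{20210822eq02} with $\lambda_\gamma = \tau_\gamma^{-1}\log(1/q)$; the prefactor $10$ is absorbed by the first window.

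Set $\Phi(t) := e^{-tH_\gamma}g_{\notparallel}$. First I would check that $\Phi(t)$ stays in the $x$-mean-zero subspace: taking the $x$-average decouples the equation as $\partial_t \langle \Phi\rangle + \varepsilon\gamma \partial_y^4 \langle\Phi\rangle = 0$ with vanishing initial datum, so $\langle \Phi(t)\rangle \equiv 0$. Since $v_1(y)\partial_x$ is skew-adjoint on $L^2(\T^2)$, the basic energy identity
$$
\|\Phi(\tau)\|_{L^2}^2 + 2\varepsilon\gamma \int_0^\tau \|\Delta\Phi(s)\|_{L^2}^2\, ds = \|\Phi(0)\|_{L^2}^2
$$
follows, together with parabolic smoothing by the hyper-viscosity producing $\|\Phi(t)\|_{H^1} \lesssim (\varepsilon\gamma t)^{-1/4}\|\Phi(0)\|_{L^2}$.

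The core of the argument is a dichotomy on a single window $[0,\tau_\gamma]$ driven by a small threshold $\delta = \delta_\gamma$ to be optimised. In the \emph{dissipative regime} $\int_0^{\tau_\gamma}\|\Delta\Phi\|_{L^2}^2\, ds \ge \delta\|g_{\notparallel}\|_{L^2}^2$, the energy identity alone forces $\|\Phi(\tau_\gamma)\|_{L^2}^2 \le (1-2\varepsilon\gamma\delta)\|g_{\notparallel}\|_{L^2}^2$. In the \emph{mixing regime}, where the reverse inequality holds, an averaging argument produces a time $t_* \in [0,\tau_\gamma/2]$ at which $\|\Delta \Phi(t_*)\|_{L^2}^2 \lesssim (\delta/\tau_\gamma)\|g_{\notparallel}\|_{L^2}^2$, so $\Phi(t_*)$ is regular. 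Starting Duhamel at $t_*$ against the pure transport semigroup,
$$
\Phi(\tau_\gamma) = e^{-(\tau_\gamma - t_*) v_1\partial_x}\Phi(t_*) - \varepsilon\gamma \int_{t_*}^{\tau_\gamma} e^{-(\tau_\gamma - s) v_1\partial_x} \Delta^2 \Phi(s)\, ds,
$$
pairing with a unit $H^1$ test function and invoking \eqref{20210822eq01} on the leading term while controlling the error integral by Cauchy--Schwarz against the small dissipation integral yields an $H^{-1}$-decay estimate. Combined with the interpolation $\|\Phi(\tau_\gamma)\|_{L^2}^2 \le \|\Phi(\tau_\gamma)\|_{H^{-1}}\|\Phi(\tau_\gamma)\|_{H^1}$ and the parabolic $H^1$-bound above, this converts into $L^2$-contraction.

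The main obstacle, and the heart of \cite{CDE20}, is arranging the balance between the two regimes so as to obtain the sharp exponent $2/(2+m)$: integrating by parts in the error term produces commutators $[\Delta,\, e^{-(\tau_\gamma - s)v_1\partial_x}]$ whose $L^2$-norm grows polynomially in $\tau_\gamma$ (through $v_1'$ and $v_1''$), and this polynomial loss must be reabsorbed by the mixing decay $(1+\tau_\gamma)^{-m}$ and the threshold $\delta$ without spoiling the exponent. Matching the two contributions ultimately forces $\tau_\gamma \sim \gamma^{-2/(2+m)}$ and hence $\lambda_\gamma \sim \gamma^{2/(2+m)}$; once the single-step contraction is established, iteration yields \eqref{20210822eq02}.
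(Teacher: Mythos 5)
The paper does not prove this proposition; it is taken directly from \cite[Theorem 2.1]{CDE20}, so the relevant comparison is with the argument in that reference. Your sketch does reproduce the overall CDE20 mechanism correctly: reduce to a one-window contraction at a scale $\tau_\gamma$, split into a dissipative regime (energy identity) and a mixing regime (Duhamel against the pure transport group, $H^{-1}$ mixing estimate, interpolation back to $L^2$), and iterate. The scaling you posit for $\tau_\gamma$ matches the claimed $\lambda_\gamma\sim\gamma^{2/(2+m)}$.

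There is, however, one step that does not hold as you wrote it. The claimed parabolic smoothing $\|\Phi(t)\|_{H^1}\lesssim(\varepsilon\gamma t)^{-1/4}\|\Phi(0)\|_{L^2}$ is \emph{false} for the full semigroup $e^{-tH_\gamma}$: testing the equation against $-\Delta\Phi$ gives
$$
\frac{d}{dt}\|\nabla\Phi\|_{L^2}^2 = -2\int_{\T^2} v_1'(y)\,\partial_x\Phi\,\partial_y\Phi \;-\; 2\varepsilon\gamma\|\nabla\Delta\Phi\|_{L^2}^2,
$$
so the transport term can make $\|\nabla\Phi\|_{L^2}$ grow exponentially at rate $\|v_1'\|_\infty$, and no uniform $L^2\to H^1$ smoothing at rate $(\gamma t)^{-1/4}$ is available. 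What the CDE20 argument actually uses is the averaged-time control you already set up: from $\varepsilon\gamma\int_0^{\tau}\|\Delta\Phi\|_{L^2}^2\,ds < \delta\|g_{\notparallel}\|_{L^2}^2$ one extracts a single $t_*\in[0,\tau/2]$ with $\|\Delta\Phi(t_*)\|_{L^2}^2\lesssim \delta\|g_{\notparallel}\|_{L^2}^2/(\varepsilon\gamma\tau)$, and then interpolates $\|\nabla\Phi(t_*)\|_{L^2}\lesssim\|\Phi(t_*)\|_{L^2}^{1/2}\|\Delta\Phi(t_*)\|_{L^2}^{1/2}$ to get $H^1$ control at that time only, not for all $t$. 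Your final interpolation step $\|\Phi(\tau)\|_{L^2}^2\le\|\Phi(\tau)\|_{H^{-1}}\|\Phi(\tau)\|_{H^1}$ therefore cannot invoke a smoothing bound at $t=\tau$; the $H^1$ factor has to come from the energy identity / averaging, not from a heat-type estimate. Finally, you quite fairly flag that the derivation of the sharp exponent $2/(2+m)$ is the real content of \cite{CDE20} — your sketch asserts the balance but does not carry it out, so it should be read as an outline of the cited proof rather than a self-contained argument.
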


We are ready to state the main theorem of this paper. 

\begin{thm} \label{mainthm}
Let $u_0 \in H_0^2(\T^2)$, $|a|$ defined in \eqref{maineq02} be sufficiently small and ${\bf v}=\begin{pmatrix}
v_1(y) \\
0
\end{pmatrix}$ be a horizontal polynomial mixing shear flow with parameters $C_1>0$ and $m \ge 2$. If
$$
\left\|\int_{\T} u_0(x, \cdot) dx \right\|_{L_y^2} \ll 1, 
$$
then there exists some $\gamma^*>0$,  which only depends on $\varepsilon, a, b, c$ and $\left\|u_0\right\|_{L^2}$, such that for any $0< \gamma \le \gamma^*$, there exists a global-in-time weak solution (or mild solution) of \eqref{maineq02} with initial data $u_0$, such that $u \in L^\infty \left([0, \infty), L^2(\T^2) \right) \cap L^2 \left([0, \infty), H^2(\T^2) \right)$. Moreover, $\|u(t)\|_{L^2}$ converges to $0$ exponentially. 
\end{thm}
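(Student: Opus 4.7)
My plan is to prove Theorem \ref{mainthm} via a continuation--bootstrap argument that couples the enhanced dissipation of Proposition \ref{20210823prop01} to energy estimates exploiting the smallness of $|a|$. First I split the solution using the kernel decomposition already introduced in the paper:
\[
u(t,x,y)=\bar u(t,y)+u_{\notparallel}(t,x,y),\qquad \bar u(t,y):=\langle u\rangle(t,y).
\]
Averaging \eqref{maineq02} in $x$ gives the one--dimensional mean equation
\[
\partial_t \bar u+\varepsilon\gamma\,\partial_y^4\bar u=\gamma\,\partial_y^2\bigl\langle au^3+bu^2\bigr\rangle,
\]
while the fluctuation obeys
\[
\partial_t u_{\notparallel}+H_\gamma u_{\notparallel}=\gamma\Bigl(\Delta(au^3+bu^2)-\partial_y^2\langle au^3+bu^2\rangle\Bigr).
\]
Because $\int_{\T^2} u\,dxdy$ is conserved and starts at zero, $\bar u$ has zero $y$--mean, so Poincar\'e gives $\bar u$ only the slow exponential rate $\varepsilon\gamma$ from $\partial_y^4$, whereas Proposition \ref{20210823prop01} gives $u_{\notparallel}$ the much faster rate $\lambda_\gamma\sim\gamma^{2/(2+m)}$ in the linear regime.

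\medskip

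I first establish local well--posedness in $C([0,T_0];L^2(\T^2))\cap L^2([0,T_0];H^2(\T^2))$ by a standard contraction--mapping argument, bounding the cubic term via $H^1\hookrightarrow L^p(\T^2)$ for any finite $p$. On a maximal interval $[0,T)$ I then impose the bootstrap hypotheses
\[
\|u_{\notparallel}(t)\|_{L^2}\le K_1\,e^{-\lambda_\gamma t/2}\|u_0\|_{L^2},\qquad \|\bar u(t)\|_{L^2_y}\le K_2\,\|\bar u_0\|_{L^2_y},
\]
together with an $L^2_tH^2_{x,y}$ control on $u$. To improve the first, I write $u_{\notparallel}$ via Duhamel against $e^{-tH_\gamma}$, apply Proposition \ref{20210823prop01}, and bound the nonlinear forcing in $L^2$ (or $H^{-1}$) by Gagliardo--Nirenberg combinations of $\|u\|_{L^2}$ and $\|\Delta u\|_{L^2}$, the prefactor $|a|$ providing the slack that closes the loop. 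To improve the second, I take the $L^2_y$ inner product of the mean equation with $\bar u$ and integrate by parts; the dangerous cubic contributions $\gamma a\int\partial_y^2\bar u\cdot\bigl(\bar u^3+3\bar u\langle u_{\notparallel}^2\rangle+\langle u_{\notparallel}^3\rangle\bigr)dy$ are absorbed using a combination of $|a|\ll 1$, the smallness of $\|\bar u\|_{L^2_y}$, and the decaying bootstrap bound on $u_{\notparallel}$.

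\medskip

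The hardest step, and the main obstacle, is handling the unstable cubic $\Delta(au^3)$. Expanding $u^3=(\bar u+u_{\notparallel})^3$ produces the self--interaction $\bar u^3$, which feeds back into the equation for $\bar u$ and does \emph{not} benefit from enhanced dissipation: it only sees the slow $\varepsilon\gamma$ hyper--diffusion. This is precisely where both smallness hypotheses enter. Smallness of $\|\bar u_0\|_{L^2_y}$ keeps the mean in a regime where the backward--diffusion--like term $3|a|\gamma\int\bar u^2|\partial_y\bar u|^2 dy$ (produced from $\gamma a\int\partial_y^2\bar u\cdot\bar u^3\,dy$) is controlled by $\varepsilon\gamma\|\partial_y^2\bar u\|_{L^2_y}^2$ through the 1D interpolation $\|\bar u\|_{L^\infty}^2\lesssim\|\bar u\|_{L^2}\|\partial_y\bar u\|_{L^2}$. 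Smallness of $|a|$ is what controls the global destabilizing contribution $-3a\gamma\int u^2|\nabla u|^2\,dxdy=\tfrac{3|a|\gamma}{4}\|\nabla(u^2)\|_{L^2}^2$ appearing in the full $L^2$ energy identity, since a 2D Gagliardo--Nirenberg inequality estimates this by $C|a|\gamma\|u\|_{L^2}^2\|\Delta u\|_{L^2}^2$, which for $|a|$ small can be absorbed into $\varepsilon\gamma\|\Delta u\|_{L^2}^2$.

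\medskip

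Once the constants $K_1,K_2$ are strictly improved on $[0,T)$ for all $\gamma\in(0,\gamma^*]$ with $\gamma^*=\gamma^*(\varepsilon,a,b,c,\|u_0\|_{L^2})$ chosen small enough, the standard continuation criterion forces $T=\infty$ and yields the claimed $u\in L^\infty_tL^2\cap L^2_tH^2$ regularity. Exponential decay of $\|u(t)\|_{L^2}$ then follows by combining the enhanced dissipation decay of $u_{\notparallel}$ from Proposition \ref{20210823prop01} with the Duhamel decay of $\bar u$, whose nonlinear source is by then exponentially small; the final rate is the slower of $\lambda_\gamma$ and $\varepsilon\gamma$.
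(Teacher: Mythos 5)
Your overall strategy---split $u=\langle u\rangle+u_{\notparallel}$, use enhanced dissipation from Proposition~\ref{20210823prop01} on $u_{\notparallel}$, estimate the 1D mean equation with Gagliardo--Nirenberg interpolation, and close a bootstrap using smallness of $|a|$ and of $\|\langle u\rangle(0)\|_{L^2_y}$---matches the paper's route. However, there is one genuine structural gap that would prevent your bootstrap from closing as written, and two smaller points worth flagging.

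The structural gap concerns the form of your bootstrap hypothesis on $u_{\notparallel}$. You propose $\|u_{\notparallel}(t)\|_{L^2}\le K_1 e^{-\lambda_\gamma t/2}\|u_0\|_{L^2}$ measured from $t=0$, and plan to improve it by a single Duhamel application of $e^{-tH_\gamma}$. But Proposition~\ref{20210823prop01} carries the prefactor $10$: a one-shot Duhamel from $0$ to $t$ returns $\|u_{\notparallel}(t)\|_{L^2}\le 10\,e^{-\lambda_\gamma t}\|u_{\notparallel}(0)\|_{L^2}+(\text{errors})$, which does not beat a prefactor $K_1\ge 10$ uniformly in $t$; in particular on a short interval the estimate cannot be strictly improved. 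The paper handles this by posing the bootstrap \emph{uniformly in $s$}, i.e.\ $\|u_{\notparallel}(t)\|_{L^2}\le 20\,e^{-\lambda_\gamma(t-s)/4}\|u_{\notparallel}(s)\|_{L^2}$ for all $0\le s\le t\le t_0$, together with the energy-flux bound $\varepsilon\gamma\int_s^t\|\Delta u_{\notparallel}\|_{L^2}^2\,d\tau\le 10\|u_{\notparallel}(s)\|_{L^2}^2$, and then \emph{iterating in time} over blocks of length $\tau^*=4/\lambda_\gamma$, where $10\,e^{-\lambda_\gamma\tau^*}=10/e^4<1/e$ absorbs the prefactor (Propositions~\ref{bootestprop2} and~\ref{bootestprop3}). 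Without this uniform-in-$s$ formulation and the $\tau^*$-stepping, the factor $10$ from the enhanced-dissipation estimate blocks the contraction of the bootstrap constants. You should also make explicit what actually makes the error terms vanish as $\gamma\to 0$: the point is that every cross term in the paper's closed estimates is bounded by a positive power of $\gamma/\lambda_\gamma\sim\gamma^{m/(2+m)}\to 0$, and it is this---not $|a|$-smallness alone---that lets you choose $\gamma^*$. Smallness of $|a|$ is only needed for those contributions (like $\widetilde{K_0}$ and $H_{2,1}$ in the paper's proof) that are \emph{not} accompanied by a $\gamma/\lambda_\gamma$ factor.

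Two smaller points. First, your local theory in $C_tL^2\cap L^2_tH^2$ with a fixed point using only $H^1\hookrightarrow L^p$ is weaker than what the nonlinearity $\Delta(u^3)$ comfortably supports; the paper builds mild solutions in a stronger $H^2$-based space $X_T$ with weighted higher-derivative norms precisely to make $\Delta(u^3)$ well-defined and Lipschitz, and since $u_0\in H^2_0$ there is no reason to weaken this. Second, your claimed exponential rate (``slower of $\lambda_\gamma$ and $\varepsilon\gamma$'') is qualitatively right, but the paper's decay for $\langle u\rangle$ is $\varepsilon\gamma\lambda_1^2/4$ from the 1D Poincar\'e inequality, and since $\lambda_\gamma\sim\gamma^{2/(2+m)}\gg\gamma$ for small $\gamma$, the bottleneck is always the mean equation, not $u_{\notparallel}$.
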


\begin{rem} \label{20211026rem01}
\begin{enumerate}
    \item [(1).] In the sequel, we will prove Theorem \ref{mainthm} in a quantitative way, in particular, we will be precise on the size of $|a|$ and $\left\| \int_{\T} u_0(x, \cdot) dx \right\|_{L_y^2}$; 

\item [(2).] Let us comment on the roles of the parameter $|a|$ and the flow amplitude $A$. Recall that, after the rescaling leading to \eqref{maineq02}, one has $\gamma=A^{-1}$. Moreover, by Proposition \ref{20210823prop01}, the enhanced-dissipation rate associated with the shear flow is
$$
\lambda_\gamma=C_{2,\varepsilon}\gamma^{\frac{2}{2+m}}.
$$
Thus, on the enhanced-dissipation time scale, the Duhamel terms associated with the nonlinearity (see \eqref{20210823eq01} and \eqref{20210905eq01}) naturally involve the factor
$$
\frac{\gamma}{\lambda_\gamma}
\sim \gamma^{\frac{m}{m+2}}
=A^{-\frac{m}{m+2}}.
$$
This scaling shows that increasing the flow amplitude $A$ strengthens the stabilizing effect of the shear flow by reducing the accumulation of nonlinear terms over the enhanced-dissipation time scale.

However, increasing the flow amplitude $A$ through the above scaling does \emph{not} by itself remove the difficulty caused by a large destabilizing cubic coefficient $|a|$. Indeed, the term $\Delta(a u^3)$ enters the nonlinear estimates through the size of $|a|$, and its contribution has to be controlled within the bootstrap argument. Thus, in Theorem \ref{mainthm}, the smallness of $|a|$ should be understood not merely as a consequence of the rescaling, but as part of a sufficient quantitative regime in which the cubic destabilizing effect can be dominated by the enhanced dissipation induced by the shear flow. This balance is reflected in Propositions \ref{bootestprop1} and \ref{bootestprop3}. 

In view of the above discussion, it is natural to ask whether sufficiently strong shear flows can still suppress blow-up for large $|a|$, and what the corresponding sharp relation between $|a|$ and the flow amplitude $A$ should be.

\vspace{0.1cm}

\item [(3).] Theorem \ref{mainthm} shows that the $L^2$ blow-up of the unstable Cahn--Hilliard equation can be suppressed via a shear flow. More precisely, let us consider the unstable Cahn--Hilliard equation without advection, that is
    \begin{equation} \label{unstable}
    \begin{cases}
    u_t+\varepsilon \Delta^2 u= \Delta(a u^3+ b u^2)  \quad & \quad  \textrm{on} \quad \T^2;  \\
\\
u \ \textrm{periodic} \quad & \quad \textrm{on} \quad  \partial \T^2, 
    \end{cases}
    \end{equation} 
where $\varepsilon>0$, $a<0$ and $b \in \R$. In the phase-field interpretation, the regime $a<0$ corresponds to an unstable free-energy landscape in which concentration fluctuations may be amplified rather than restored. It is well known that if the \emph{Landau--Ginzburg free energy} of the initial data is sufficiently negative, that is, if
$$
-\int_{\T^2} \left(H(u_0)+\frac{\gamma}{2} |\nabla u_0|^2 \right) dxdy
$$
is sufficiently large, where
\begin{equation} \label{20211027eq21}
H(u)=\int_0^u \left(as^3+bs^2 \right) ds=\frac{au^4}{4}+\frac{bu^3}{3}
\end{equation} 
is the bulk potential, then there exists a $T^*>0$, such that
\begin{equation} \label{20211025eq01}
\limsup_{t \to T^*_{-}} \|u(t)\|_{L^2}=\infty. 
\end{equation} 
(see, e.g., \cite[Theorem 3.1]{ES86}). Note that under the assumptions of Theorem \ref{mainthm}, it is possible for the Landau--Ginzburg free energy of the initial data to be very negative; for example, one can take an appropriate $b$ with $|b|$ sufficiently large. Hence the solution of the corresponding non-advective problem \eqref{unstable} may blow up in finite time, while our result shows that, with an additional shear flow satisfying \eqref{20210822eq01}, the solution can exist globally.

\end{enumerate}
\end{rem}

\vspace{0.1cm}

\noindent {\bf Novelty and comparison with previous works.} We now clarify the main differences between the present work and the closely related papers \cite{FFIT19, FHXZ21}. Although our argument is inspired by the general philosophy of using mixing or enhanced dissipation to suppress instability, the unstable Cahn--Hilliard equation studied here contains difficulties that are absent in the stable case.

In \cite{FFIT19}, the authors considered the stable advective Cahn--Hilliard equation, where the effect of the imposed velocity field is quantified by the dissipation time of the associated advection-hyperdiffusion equation on the whole mean-zero space. Once this dissipation time is sufficiently small, the mixing mechanism acts on the full fluctuation around the conserved spatial average and drives the solution toward the homogeneous mixed state. The shear-flow setting in the present paper is structurally different: the operator
\[
H_{\gamma}=\varepsilon\gamma\Delta^2+v_1(y)\partial_x
\]
is dissipation enhancing only on the orthogonal complement of the kernel of \(v_1(y)\partial_x\). Thus the solution must be decomposed into the shear-invariant part \(\langle u\rangle\) and the enhanced-dissipative part \(u_{\notparallel}\), and \(\langle u\rangle\) has to be controlled by separate nonlinear estimates rather than directly by the mixing effect. In this sense, although our result assumes \( |a| \) to be small, the main theorem is not a direct perturbative consequence of the stable result in \cite{FFIT19}.

Another essential difference comes from the sign of the cubic term. In the stable equation in \cite{FFIT19}, the cubic term is compatible with the dissipative energy structure. In the present unstable regime \(a<0\), however, the potential \(H(u)\) defined in \eqref{20211027eq21} is not bounded from below, and the contribution of \(\Delta(a u^3)\) may drive growth rather than dissipation. Hence a global a priori energy bound is unavailable. Our proof replaces this missing energy control by a bootstrap argument: enhanced dissipation first yields decay and integrated \(H^2\)-type control for \(u_{\notparallel}\); this is used to control \(\langle u\rangle\); and the resulting bound is then fed back into the estimates for \(u_{\notparallel}\), closing the bootstrap; we refer the reader to Figure \ref{fig:bootstrap-roadmap} for an outline of how the bootstrap argument is organized.
\vspace{0.1cm}

The present work also differs from the earlier work \cite{FHXZ21} by Feng, Xu, and the first and third authors of the present paper. The equation studied there is a second-order non-local semilinear parabolic equation, whose nonlinear source term contains no spatial derivatives. In the present fourth-order Cahn--Hilliard problem, however, the nonlinearity takes the form
\[
\Delta(a u^3+b u^2).
\]
After expansion, this produces derivative nonlinearities such as
\[
u^2\Delta u, \qquad u|\nabla u|^2,
\]
which are considerably more delicate than the semilinear source terms treated in \cite{FHXZ21}. Moreover, after decomposing
\[
u=\langle u\rangle+u_{\notparallel},
\]
these derivative nonlinearities generate coupled terms between the zero-mean component $u_{\notparallel}$ and the shear-invariant component $\langle u\rangle$. Since the shear flow only directly enhances dissipation on the zero-mean component, while $\langle u\rangle$ lies in the kernel of the transport operator, the two components must be controlled simultaneously. This coupled fourth-order bootstrap structure is the main technical novelty of the present paper. 

\medskip 

The structure of this paper is as follows. Section 2 deals with the local existence to the problem \eqref{maineq}. In Section \ref{bootassumption}, we establish several local estimates for the terms $\left\|u_{\notparallel}(t)\right\|_{L^2}$ and $\gamma \int_s^t \left\|\Delta u_{\notparallel} \right\|_{L^2}^2 d\tau$, and this allows us to make the bootstrap assumptions to the problem \eqref{maineq02}. As a consequence of the bootstrap assumptions, in Section 4, we prove uniform bounds for the term $\langle u \rangle$ with both $|a|$ and $\gamma$ being sufficiently small. Section 5 is devoted to prove the main theorem \ref{mainthm}. We prove it via a bootstrap argument by showing the bootstrap assumptions can be improved.

Finally, in this paper, we will write $B$ and $C$ as some constants that might change line by line, where
\begin{enumerate}
\item [(1).] $B$ will only depend on $\varepsilon, b$ and any dimensional constants;
\item [(2).] $C$ will only depend on $\varepsilon, a, b, \|\langle u \rangle (0) \|_{L^2}$,  $\left\|u_{\notparallel}(0)\right\|_{L^2}$ and any dimensional constants. 
\end{enumerate} 
{\bf Acknowledgements.} 
B. Hu acknowledges support from the Simons Travel Grant MPS-TSM-00007213.
D. Xu acknowledges support from the National Natural Science Foundation of China (NSFC) under Grants No. 12371428 and No. 11871435.
Y. Zhang acknowledges support from the National Natural Science Foundation of China (NSFC) under Grants No. 12401562, No. 12571459, and No. 12241103. The authors would also like to thank the anonymous referee for the careful reading and valuable suggestions, which led to several improvements in the presentation of the paper.

\section{Preliminary: Local existence}  \label{localext} 

In this section, we study the local existence of the solutions to the problem \eqref{maineq} with arbitrary $H^2$ initial data. Here, instead of proving a prior estimate (see, \cite{ES86}), it is more convenient for us to consider the \emph{mild solutions} of \eqref{maineq}, which will play an important role in our later context when we consider the case when $v$ is a shear flow. 

The result in this section is standard. Nevertheless, we would like to make a remark that it is already known that there exists a global solution to the equation \eqref{maineq} if $a>0$ (see, e.g., \cite[Theorem 1.1]{ES86} and \cite[Proposition 2.1]{FFIT19}), while the global existence is not guaranteed if $a<0$ (see, \cite{ES86}). To this end, we make a remark that the constant $C$ used in this section is also allowed to depend on $c$, as we do not require $c=0$ in the current section. 

We start with some basic setup. Given a function $f \in L^1(\T^2)$, we denote $\hat{f}({\bf k})$ to be its Fourier coefficient of $f$ at frequency ${\bf k} \in \Z^2$, and hence $\hat{f}:= \left\{\hat{f}(\bf k)\right\}_{\bf k \in \Z^2}$. Then the \emph{inhomogeneous Sobolev space} $H^s(\T^2), s \in \R$ is defined to be the collection of all measure functions $f$ on $\T^2$, with
$$
\|f\|_{H^s}^2:=\sum_{{\bf k} \in \Z^2} (1+|{\bf k}|^2)^s \left| \hat{f}({\bf k}) \right|^2=\left\| \left(I-\Delta \right)^{s/2} f \right\|_2^2<\infty, 
$$
while the \emph{homogeneous Sobolev space} $\dot{H}^s(\T^2), s \in \R$ consists of all measurable functions $f$ on $\T^2$ with 
$$
\|f\|_{\dot{H}^s}^2:=\sum_{{\bf k} \in \Z^2} |{\bf k}|^{2s} \left| \hat{f}({\bf k}) \right|^2=\left\| \left(-\Delta \right)^{s/2} f \right\|_2^2<\infty.
$$
Note that it is clear that for $f \in L^2(\T^2)$, $f \in H^s(\T^2)$ if and only if $f \in \dot{H}^s(\T^2)$.

Let $e^{-t\Delta^2}$ be the semigroup generated by the bi-Laplacian $\Delta$, namely, 
$$
e^{-t\Delta^2}f:= \calF^{-1} \left( \left\{e^{-t\left|{\bf k} \right|^4} \hat{f}(\bf k) \right\}_{{\bf k} \in \Z^2} \right),
$$
where $\calF^{-1}$ is the inverse Fourier transform on $\Z^2$: for any $\left\{a_{\bf k} \right\}_{{\bf k} \in \Z^2}$, 
$$
\calF^{-1} \left( \left\{a_{\bf k} \right\} \right)_{\bf k \in \Z^2}(x):=\sum_{\bf k \in \Z^n} a_{\bf k} e^{-2\pi x \cdot \bf k}, \quad x \in \T^2. 
$$

The following semi-group estimate for $e^{-t \Delta^2}$ is standard and we would like to leave the proof to the interested reader. 

\begin{lem} \label{20210728lem01} 
For any $s>0$, there exists a dimension constant $C>0$ such that
$$
\left\| \left(-\Delta \right)^{\frac{s}{2}}e^{-t\Delta^2} f \right\|_{L^2} \le Ct^{-\frac{s}{4}} \|f\|_{L^2}. 
$$
\end{lem}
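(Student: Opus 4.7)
The plan is to reduce the semigroup estimate to a pointwise bound on the Fourier side via Plancherel's identity on $\T^2$, using the fact that both $(-\Delta)^{s/2}$ and $e^{-t\Delta^2}$ are Fourier multipliers that commute and are diagonal in the exponential basis. Concretely, I would start by writing
\begin{equation*}
\left\| (-\Delta)^{s/2} e^{-t\Delta^2} f \right\|_{L^2}^2 \;=\; \sum_{{\bf k} \in \Z^2} |{\bf k}|^{2s}\, e^{-2t|{\bf k}|^4}\, \bigl|\hat{f}({\bf k})\bigr|^2,
\end{equation*}
so that the entire problem collapses to controlling the scalar multiplier $m_t({\bf k}) := |{\bf k}|^{2s} e^{-2t|{\bf k}|^4}$ uniformly in ${\bf k}$.

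The second step is the elementary one-variable bound
\begin{equation*}
\sup_{y \ge 0} y^{s/4} e^{-ty} \;\le\; C_s\, t^{-s/4},
\end{equation*}
proved by differentiating in $y$ and noting the maximum occurs at $y = s/(4t)$, which gives the explicit value $(s/(4et))^{s/4}$. Applied with $y = |{\bf k}|^4$ and squared, this yields $m_t({\bf k}) \le C_s\, t^{-s/2}$ for every ${\bf k} \in \Z^2$ and $t > 0$ (with the ${\bf k} = 0$ mode handled trivially since $s > 0$). Inserting this uniform bound into the Plancherel identity and invoking $\sum_{{\bf k}} |\hat{f}({\bf k})|^2 = \|f\|_{L^2}^2$ gives the estimate after taking square roots.

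There is essentially no obstacle here — the argument is entirely pointwise on the Fourier side and the only substantive ingredient is the elementary scalar inequality above. The only minor things to double-check are the correct scaling exponent (the factor $t^{-s/4}$ comes from matching $|{\bf k}|^{2s}$ against $e^{-2t|{\bf k}|^4}$, where the bi-Laplacian contributes the fourth power in the exponent and hence a quarter power in $t$) and that no mean-zero hypothesis is needed since $|{\bf k}|^s$ vanishes at ${\bf k} = 0$.
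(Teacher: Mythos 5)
Your proof is correct and is precisely the standard argument the paper alludes to when it ``leaves the proof to the interested reader'': Plancherel reduces the claim to the pointwise bound $|{\bf k}|^{s}e^{-t|{\bf k}|^4}\le C_s t^{-s/4}$, which follows from the elementary scalar maximization you carry out. One small remark: the constant you obtain, $(s/(4e))^{s/4}$, depends on $s$ rather than on the dimension, so the paper's phrase ``dimension constant'' is a harmless misnomer; what matters, and what you establish, is that $C$ is independent of $t$ and $f$.
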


\medskip

We are ready to introduce the definition of the mild solutions to the problem \eqref{maineq}. 

\begin{defn} \label{mildsol}
Let $v \in L^\infty([0, \infty); W^{1, \infty})$ be a divergence free flow.  A function $u \in C \left([0, T]; H^2\right) \cap L^2\left( \left(0, T \right]; H^4 \right)$, $T>0$, is called a \emph{mild solution} of \eqref{maineq} on $[0, T]$ with initial data $u_0 \in H^2$, if for any $0 \le t \le T$, 
\begin{eqnarray} \label{mildsoleq01}
&& u(t)=\calT(u)(t):=e^{-t\varepsilon \Delta^2}u_0+\int_0^t e^{-(t-s)\varepsilon \Delta^2} \Delta \left(a u^3+b u^2+cu \right) ds \nonumber \\
&& \quad \quad \quad \quad  \quad \quad \quad \quad \quad   -\int_0^t e^{-(t-s)\varepsilon \Delta^2}\left(v \cdot \nabla u \right)ds
\end{eqnarray}
holds pointwisely in time with values in $H^2$, where the integral is defined in B\"ochner sense. 
\end{defn} 

\begin{rem}
We claim that the second integral in \eqref{mildsoleq01} is well-defined. Indeed, if $u \in C \left([0, T]; H^2 \right)$, we have
\begin{eqnarray} \label{20210727eq01} 
&&\left\|\Delta(au^3+ bu^2+cu) \right\|_{L^2} \nonumber \\
&& \quad \quad \quad = \left\|3a u^2 \Delta u+6a u|\nabla u|^2+2b|\nabla u|^2+2bu \Delta u+|c|\Delta u \right\|_{L^2} \nonumber\\
&& \quad \quad \quad \le 3|a| \|u\|_{L^\infty}^2 \|\Delta u \|_{L^2}+6|b| \|u\|_{L^\infty} \|\nabla u \|_{L^4}^2+c\|\Delta u \|_{L^2} \nonumber \\
&& \quad \quad \quad \quad \quad + 2 |b| \|\nabla u\|_{L^4}^2+2 |b| \|u\|_{L^\infty} \|\Delta u\|_{L^2}.
\end{eqnarray}
By the Gagliardo-Nirenberg's inequalities with $n=2$, 
\begin{equation} \label{20210729eq20}
\left\|u \right\|_{L^\infty} \le C \left(\left\|\Delta u \right\|_{L^2}^{\frac{1}{2}} \left\|u \right\|_{L^2}^{\frac{1}{2}}+\|u\|_{L^2} \right) \le C \|u\|_{H^2}, 
\end{equation} 
\begin{equation} \label{20210729eq21}
\left\|\nabla u \right\|_{L^4} \le C \left(\left\|\Delta u \right\|_{L^2}^{\frac{3}{4}} \left\|u \right\|_{L^2}^{\frac{1}{4}}+\|u\|_{L^2} \right) \le C\|u\|_{H^2}
\end{equation} 
we can bound the right hand side of \eqref{20210727eq01} by 
\begin{equation} \label{20210727eq02} 
C_{a, b, c} \left( \|u\|_{H_2}^3+\|u\|_{H_2}^2+\|u\|_{H^2} \right), 
\end{equation} 
where $C_{a, b, c}$ is some positive constant which only depends on $a, b$ and $c$. The desired claim then follows clearly. 

\end{rem}

We shall use the Banach contraction mapping to construct such a mild solution. For this purpose, we introduce the following Banach space. For any $T>0$, we define 
$$
X_T:=C \left([0, T]; H^2 \right) \cap \left\{u: \T^2 \times \R_+ \to \R \bigg | \sup_{0<t \le T} \left(t^{\frac{1}{4}} \| \Delta^{\frac{3}{2}}  u \|_{L^2}+t^{\frac{1}{2}} \|\Delta^2 u\|_{L^2} \right)<\infty \right\}
$$
with the norm
$$
\|u\|_{X_T}:=\max \left( \sup_{0 \le t\le T} \|u\|_{H^2}, \sup_{0 \le t \le T} \left(t^{\frac{1}{4}} \|\Delta^{\frac{3}{2}} u \|_{L^2}+t^{\frac{1}{2}} \|\Delta^2 u \|_{L^2}\right)  \right).
$$
We have the following result. 

\begin{thm} \label{20210728thm01} 
Let $v \in L^\infty([0, \infty); W^{1, \infty})$ be a divergence free flow. Then there exists $0<T \le 1$ depending on $\varepsilon$, $a$, $b$, $c$, $\sup_{t \ge 0} \|v\|_{W^{1, \infty}}$ and $\|u_0\|_{H^2}$ such that \eqref{maineq} admits a unique mild solution $u$ on $[0, T]$, which is unique in $X_T$. 
\end{thm}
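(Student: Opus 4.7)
The plan is a standard Banach fixed-point argument applied to the operator $\calT$ defined in \eqref{mildsoleq01}, carried out on the closed ball $B_R := \{u \in X_T : \|u\|_{X_T} \le R\}$ with $R := 2\|u_0\|_{H^2}+1$ and $T \in (0,1]$ to be chosen small depending on $R$, $\varepsilon$, $a,b,c$, and $\sup_{t \ge 0}\|v\|_{W^{1,\infty}}$.

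For the linear piece $e^{-t\varepsilon\Delta^2}u_0$, since $u_0 \in H^2$ I use the factorizations $\Delta^{3/2}e^{-t\varepsilon\Delta^2}u_0 = \Delta^{1/2}e^{-t\varepsilon\Delta^2}(\Delta u_0)$ and $\Delta^2 e^{-t\varepsilon\Delta^2}u_0 = \Delta e^{-t\varepsilon\Delta^2}(\Delta u_0)$ together with Lemma \ref{20210728lem01}; this gives an $X_T$-bound by $C\|u_0\|_{H^2}$ uniformly in $T$. For the Duhamel terms, I use the pointwise estimate $\|\Delta(au^3+bu^2+cu)\|_{L^2} \le C(\|u\|_{H^2}^3+\|u\|_{H^2}^2+\|u\|_{H^2})$ already established in \eqref{20210727eq01}--\eqref{20210727eq02}, combined with $\|v\cdot\nabla u\|_{L^2} \le \|v\|_{L^\infty}\|u\|_{H^2}$ and $\|v\cdot\nabla u\|_{H^1} \le C\|v\|_{W^{1,\infty}}\|u\|_{H^2}$. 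For the $H^2$ and $t^{1/4}\|\Delta^{3/2}\cdot\|_{L^2}$ parts of the $X_T$ norm, applying Lemma \ref{20210728lem01} with $s=2$ and $s=3$ respectively produces integrable singular kernels $(t-s)^{-1/2}$ and $(t-s)^{-3/4}$, and the resulting time integrals are controlled by a positive power of $T$.

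The delicate piece is $t^{1/2}\|\Delta^2\cdot\|_{L^2}$ of the Duhamel terms, since the naive estimate $\|\Delta^2 e^{-\tau\varepsilon\Delta^2}\Delta N(u)\|_{L^2}\lesssim \tau^{-1}$ gives a non-integrable kernel. To handle this I split $\int_0^t = \int_0^{t/2}+\int_{t/2}^t$: on $[0,t/2]$ we have $t-s \ge t/2$, so moving one Laplacian back onto the semigroup and using the off-diagonal bound $\|\Delta^3 e^{-\tau\varepsilon\Delta^2}\|_{L^2 \to L^2} \lesssim \tau^{-3/2}$ yields a bounded factor of $t^{-1/2}$ that is absorbed by the weight $t^{1/2}$; on $[t/2,t]$ I exploit the additional regularity encoded in the $X_T$ norm, namely $\|\Delta^{3/2}u(s)\|_{L^2} \lesssim R s^{-1/4}$, together with a Moser/Gagliardo-Nirenberg estimate, to bound $\|\Delta^{3/2}N(u(s))\|_{L^2}$ and apply Lemma \ref{20210728lem01} with $s=3$, producing an integrable kernel $(t-s)^{-3/4}$. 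A completely analogous split handles the transport term at cost of one derivative on $v$. Summing all contributions gives $\|\calT(u)\|_{X_T} \le \|u_0\|_{H^2}+P(R,\|v\|_{W^{1,\infty}})T^\alpha$ for some polynomial $P$ and some $\alpha>0$, so choosing $T$ small ensures $\calT(B_R) \subset B_R$.

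For the contraction, I write $N(u_1)-N(u_2) = a(u_1-u_2)(u_1^2+u_1u_2+u_2^2)+b(u_1-u_2)(u_1+u_2)+c(u_1-u_2)$, which is Lipschitz in $H^2$ on $B_R$ with constant $\le C(R)$, and repeat the preceding scheme to obtain $\|\calT(u_1)-\calT(u_2)\|_{X_T} \le C(R,\|v\|_{W^{1,\infty}})T^\alpha\|u_1-u_2\|_{X_T}$. Shrinking $T$ further so that the prefactor is $\le 1/2$, the Banach fixed-point theorem produces a unique mild solution in $B_R$. Uniqueness in all of $X_T$ follows by a continuity-in-time argument: any two $X_T$ solutions agree on a small initial interval by the contraction estimate applied to a ball of radius $\max(\|u^{(1)}\|_{X_T}, \|u^{(2)}\|_{X_T})$, and the agreement is extended to $[0,T]$ by a standard bootstrap. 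The main technical obstacle is precisely the $\Delta^2$ estimate described above: the biharmonic semigroup smoothing gives a borderline (non-integrable) kernel, forcing one to exploit the built-in time-weighted higher-regularity in the definition of $X_T$ and to carry out a careful time-splitting rather than a single direct estimate.
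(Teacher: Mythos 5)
Your overall strategy—Banach fixed point on the time-weighted space $X_T$, with the linear piece controlled via Lemma~\ref{20210728lem01} and the nonlinearity controlled via the estimates \eqref{20210727eq01}--\eqref{20210727eq02}—is exactly the paper's strategy (Lemmas~\ref{20210728lem02}--\ref{20210730lem02}). Your uniqueness argument is in fact a bit more complete than the paper's, which only invokes the fixed-point theorem and hence only gives uniqueness inside the ball; your continuation/bootstrap step is what is actually needed for ``unique in $X_T$.''

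However, there is a genuine gap in your treatment of the $t^{1/2}\|\Delta^2\calT(u)\|_{L^2}$ piece on $[0,t/2]$, and it is fatal for large data. You put all three Laplacians on the semigroup, $\Delta^3 e^{-(t-s)\varepsilon\Delta^2}N(u(s))$, and use $\|\Delta^3 e^{-\tau\varepsilon\Delta^2}\|_{L^2\to L^2}\lesssim\tau^{-3/2}$. Then
$$\int_0^{t/2}(t-s)^{-3/2}\,ds \;=\; 2\bigl[(t/2)^{-1/2}-t^{-1/2}\bigr]\;\sim\;t^{-1/2},$$
so after multiplying by the weight $t^{1/2}$ you obtain a fixed $T$-independent constant times $\sup_s\|N(u(s))\|_{L^2}\lesssim R^3+R^2+R$. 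You yourself say this factor is ``absorbed'' by the weight, i.e.\ it survives as a constant; but then the summed bound cannot be of the form $\|u_0\|_{H^2}+P(R)T^\alpha$ as you subsequently claim. With $R\sim\|u_0\|_{H^2}$ the self-map condition $C\|u_0\|_{H^2}+C_0 R^3\le R$ fails for large $\|u_0\|_{H^2}$ no matter how small $T$ is, and the same problem propagates to the contraction constant. The theorem asserts short-time existence for arbitrary $H^2$ data with $T$ depending on $\|u_0\|_{H^2}$, which requires a genuine small factor in $T$ in every Duhamel contribution.

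The paper avoids this entirely by not splitting in time. It factors $\Delta^2\cdot\Delta=\Delta^{3/2}\cdot\Delta^{3/2}$ on the \emph{whole} interval, bounding $\|\Delta^{3/2}e^{-(t-s)\varepsilon\Delta^2}\|_{L^2\to L^2}\lesssim(t-s)^{-3/4}$ and using \eqref{20210729eq14} to get $\|\Delta^{3/2}N(u(s))\|_{L^2}\lesssim(\|u\|_{X_T}^2+\cdots)s^{-1/4}\|u\|_{X_T}$; the Beta-function integral $\int_0^t(t-s)^{-3/4}s^{-1/4}ds$ is a $t$-independent constant, so multiplying by $t^{1/2}$ yields the needed $T^{1/2}$. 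If you want to keep your split, the fix on $[0,t/2]$ is to move only the extant $\Delta$ onto the semigroup (leaving $\Delta^2 e^{-(t-s)\varepsilon\Delta^2}\Delta N(u)$ with kernel $(t-s)^{-1}$ and $\|\Delta N(u)\|_{L^2}$ controlled by \eqref{20210727eq02}): then $\int_0^{t/2}(t-s)^{-1}ds=\log 2$ is a $t$-independent constant, and the weight $t^{1/2}$ produces $T^{1/2}$. But once you do this correctly on $[0,t/2]$ you may as well use the paper's single-integral Beta-function argument, which makes the split unnecessary.
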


The proof of Theorem \ref{20210728thm01}  consists of several lemmas. 

\begin{lem} \label{20210728lem02} 
Under the assumption of Theorem \ref{20210728thm01}, we have for any $0<T \le 1$, 
$$
\calT(u) \in C([0, T]; H^2).
$$
Moreover, for each $t \in (0, T]$, there exists some $C=C(\varepsilon, a, b, c)>0$, such that 
\begin{eqnarray} \label{20200728eq10}
&& \left\|\calT(u)(t) \right\|_{H^2} \nonumber \\
&& \quad \quad \le C \left( \|u_0\|_{H^2}+ t^{\frac{1}{2}} \left(  \|u\|_{X_T}^3+ \|u\|_{X_T}^2+ (1+\|v\|_{L^\infty([0, \infty); W^{1, \infty})} )  \|u\|_{X_T} \right) \right).  
\end{eqnarray} 
\end{lem}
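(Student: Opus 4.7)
The plan is to decompose $\calT(u)(t)$ from (\ref{mildsoleq01}) into its three constituent pieces---the free bi-harmonic propagator on $u_0$, the reaction Duhamel integral, and the advection Duhamel integral---and estimate each separately in $H^2(\T^2)$. Because $e^{-\tau\varepsilon\Delta^2}$ is a Fourier multiplier by $e^{-\tau\varepsilon|k|^4}$, it is a contraction on every Sobolev space, and in particular the free piece satisfies $\|e^{-t\varepsilon\Delta^2}u_0\|_{H^2} \le \|u_0\|_{H^2}$, which accounts for the first term on the right-hand side of (\ref{20200728eq10}).

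For the reaction Duhamel integral I would feed the pointwise-in-time bound already recorded as (\ref{20210727eq02}) in the remark preceding the definition of $X_T$---namely $\|\Delta(au^3+bu^2+cu)(s)\|_{L^2} \le C(\|u(s)\|_{H^2}^3 + \|u(s)\|_{H^2}^2 + \|u(s)\|_{H^2})$, which is itself controlled by $\|u\|_{X_T}$---into the semigroup smoothing estimate of Lemma \ref{20210728lem01}. Setting $f := au^3+bu^2+cu$ and exploiting the fact that $\Delta$ commutes with $e^{-\tau\varepsilon\Delta^2}$, the lemma with $s=2$ gives
\begin{equation*}
\|e^{-(t-s)\varepsilon\Delta^2}\Delta f(s)\|_{H^2} \le C\bigl(1+(t-s)^{-1/2}\bigr)\|\Delta f(s)\|_{L^2},
\end{equation*}
and integrating $(t-s)^{-1/2}$ over $[0,t]$ with $t \le 1$ produces the $t^{1/2}$ prefactor multiplying $(\|u\|_{X_T}^3+\|u\|_{X_T}^2+\|u\|_{X_T})$. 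The advection integral is handled identically: the H\"older bound $\|v\cdot\nabla u\|_{L^2} \le \|v\|_{L^\infty}\|u\|_{H^2}$ combined with the same smoothing estimate and integration in $s$ yields the $Ct^{1/2}(1+\|v\|_{L^\infty([0,\infty);W^{1,\infty})})\|u\|_{X_T}$ contribution. Summing the three pieces gives (\ref{20200728eq10}).

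Continuity $\calT(u) \in C([0,T];H^2)$ then follows by standard semigroup reasoning: the free piece is continuous because $\{e^{-t\varepsilon\Delta^2}\}_{t \ge 0}$ is strongly continuous on $H^2$, while continuity of the two Duhamel integrals reduces, via dominated convergence, to the integrability of their integrands in $H^2$ (already established by the preceding bounds) together with strong continuity of $\tau \mapsto e^{-\tau\varepsilon\Delta^2}$. There is no deep obstacle here; the argument is a routine Kato-type mild-solution setup. The only delicate point is the choice of smoothing exponent: one must pass two derivatives out of $\Delta f$ via Lemma \ref{20210728lem01} with $s=2$, producing the integrable singularity $(t-s)^{-1/2}$ and hence the uniform $t^{1/2}$ prefactor that will let the subsequent contraction argument for Theorem \ref{20210728thm01} close on a small time interval.
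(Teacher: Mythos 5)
Your proposal is correct and follows essentially the same route as the paper's proof: contraction of the semigroup for the free piece, Lemma~\ref{20210728lem01} to pull two derivatives off the semigroup at the cost of an integrable $(t-s)^{-1/2}$ singularity, the bound \eqref{20210727eq02} for the nonlinear term, H\"older for the advection term, and integration in $s$ to produce the $t^{1/2}$ prefactor. The only cosmetic difference is that you estimate $\|\calT(u)(t)\|_{H^2}$ in one shot while the paper splits into $L^2$, $\dot H^1$, and $\dot H^2$ pieces, and you spell out the continuity step that the paper leaves implicit.
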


\begin{proof}
It suffices to prove the estimate \eqref{20200728eq10}. For any $t>0$, by Lemma \ref{20210728lem01}, we have
\begin{eqnarray*}
&&\left\|\Delta \calT(u)(t) \right\|_{L^2} \le \left\|e^{-t \varepsilon \Delta^2} \left(\Delta u_0 \right) \right\|_{L^2}+ \int_0^t \left\|\Delta e^{-t\varepsilon \Delta^2} (v \cdot \nabla u) \right\|_{L^2} ds\\
&& \quad \quad \quad \quad \quad \quad\quad \quad \quad\quad \quad \quad\quad  + \int_0^t \left\|\Delta e^{-(t-s)\varepsilon \Delta^2} \Delta \left(a u^3+b u^2+cu \right) \right\|_{L^2} ds  \\
&& \quad \le  C \left(\|u_0\|_{H^2}+\int_0^t (t-s)^{-\frac{1}{2}} \left[\left\|\Delta \left(a u^3+b u^2+cu \right) \right\|_{L^2}+\left\|v \cdot \nabla u \right\|_{L^2} \right] ds \right)  \\
&& \quad \le C \left(\|u_0\|_{H^2}+t^{\frac{1}{2}} \left(\|u\|_{X_T}^3+\|u\|_{X_T}^2+ (1+\|v\|_{L^\infty([0, \infty); L^\infty)}) \|u\|_{X_T} \right) \right)
\end{eqnarray*}
where we have used \eqref{20210727eq02}  in the above estimate. Similarly, we have
\begin{eqnarray*}
&& \|\calT(u)(t)\|_{L^2} \\
&& \quad \quad \le C \left(\|u_0\|_{H^2}+t \left( \|u\|_{X_T}^3+ \|u\|_{X_T}^2+ (1+\|v\|_{L^\infty([0, \infty); L^\infty)})  \|u\|_{X_T} \right) \right)
\end{eqnarray*}
and
\begin{eqnarray*}
&& \|\nabla \calT(u)(t)\|_{L^2} \\
&& \quad \quad \le C \left(\|u_0\|_{H^2}+t^{\frac{3}{4}} \left( \|u\|_{X_T}^3+ \|u\|_{X_T}^2+ (1+\|v\|_{L^\infty([0, \infty); L^\infty)})  \|u\|_{X_T} \right) \right). 
\end{eqnarray*}
Combining all these estimates yields \eqref{20200728eq10}. 
\end{proof}

\begin{lem} \label{20210728lem03} 
Under the assumption of Theorem \ref{20210728thm01}, we have for any $0<T \le 1$, there exists some $C=C(\varepsilon, a, b, c)>0$, such that 
\begin{eqnarray} \label{20210729eq01}
&& \sup_{0 \le t \le T} \left( t^{\frac{1}{4}}\left\|\Delta^{\frac{3}{2}} u \right\|_{L^2}+t^{\frac{1}{2}} \left\|\Delta^2 u \right\|_{L^2} \right) \nonumber \\
&&\quad \quad  \le C \left( \|u_0\|_{H^2}+ T^{\frac{1}{2}} \left( \|u\|_{X_T}^3+ \|u\|_{X_T}^2+ \left(1+\|v\|_{L^\infty([0, \infty); W^{1, \infty})}\right) \|u\|_{X_T} \right) \right). 
\end{eqnarray} 

\end{lem}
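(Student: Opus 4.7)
The plan is to bound $\|\Delta^{3/2}\calT(u)(t)\|_{L^2}$ and $\|\Delta^{2}\calT(u)(t)\|_{L^2}$ using the mild-solution formula \eqref{mildsoleq01}, distributing the fractional Laplacians between the bi-Laplacian semigroup (which supplies a time singularity via Lemma \ref{20210728lem01}) and the integrand (which I would estimate using Sobolev embeddings in dimension $d=2$ together with the weighted norms built into $X_T$). For the propagation of $u_0$, the factorizations $\Delta^{3/2}=\Delta^{1/2}\cdot\Delta$ and $\Delta^2=\Delta\cdot\Delta$ applied to $\Delta u_0 \in L^2$ yield pointwise bounds $C t^{-1/4}\|u_0\|_{H^2}$ and $C t^{-1/2}\|u_0\|_{H^2}$, which after multiplying by $t^{1/4}$ and $t^{1/2}$ produce the $\|u_0\|_{H^2}$ term on the right of \eqref{20210729eq01}. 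For $\Delta^{3/2}$ applied to either integral, I would place all the derivatives on the semigroup, using $\|\Delta^{3/2} e^{-(t-s)\varepsilon\Delta^2}\|_{L^2\to L^2}\le C(t-s)^{-3/4}$ together with the pointwise bound \eqref{20210727eq02} for the cubic term and $\|v\cdot \nabla u\|_{L^2}\le \|v\|_{L^\infty}\|u\|_{X_T}$ for the transport term; the singularity $\int_0^t (t-s)^{-3/4}\,ds = 4t^{1/4}$ is integrable and, after multiplication by $t^{1/4}$, produces the required $T^{1/2}$ factor.

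The main obstacle will be the $\Delta^2$ bound on the nonlinear integral, because the naive splits $\Delta^2\cdot\Delta=\Delta^{5/2}\cdot\Delta^{1/2}$ or $\Delta^3$ generate non-integrable singularities $(t-s)^{-5/4}$ or $(t-s)^{-3/2}$ near $s=t$. My plan is to split symmetrically,
\begin{equation*}
\Delta^2\,e^{-(t-s)\varepsilon\Delta^2}\,\Delta w \;=\; \Delta^{3/2}\,e^{-(t-s)\varepsilon\Delta^2}\,\Delta^{3/2} w,
\end{equation*}
so that the semigroup contributes only the borderline factor $(t-s)^{-3/4}$, at the cost of having to control $\|w\|_{\dot H^3}$ pointwise in $s$ for $w = au^3 + bu^2 + cu$. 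Using that $H^3(\T^2)$ is an algebra (since $3 > d/2 = 1$) to obtain $\|u^k\|_{H^3}\le C\|u\|_{H^2}^{k-1}\|u\|_{H^3}$ for $k=1,2,3$, combined with the interpolation $\|u(s)\|_{H^3}\le C(\|\Delta^{3/2} u(s)\|_{L^2} + \|u(s)\|_{L^2})\le C(s^{-1/4}+1)\|u\|_{X_T}$ read directly from the definition of the $X_T$-norm, I would obtain
\begin{equation*}
\|w(s)\|_{H^3}\le C\bigl(\|u\|_{X_T}^3+\|u\|_{X_T}^2+\|u\|_{X_T}\bigr)(s^{-1/4}+1).
\end{equation*}
The resulting time integral $\int_0^t(t-s)^{-3/4}s^{-1/4}\,ds$ is a finite Beta function, and multiplying by $t^{1/2}$ (and using $T\le 1$ to absorb lower powers) yields a bound of the form $CT^{1/2}(\|u\|_{X_T}^3+\|u\|_{X_T}^2+\|u\|_{X_T})$ as required.

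Finally, for $\Delta^2$ applied to the transport integral I would split $\Delta^2=\Delta^{3/2}\cdot\Delta^{1/2}$ and estimate $\|\Delta^{1/2}(v\cdot\nabla u)\|_{L^2}\le \|v\cdot\nabla u\|_{H^1}\le C\|v\|_{W^{1,\infty}}\|u\|_{H^2}\le C\|v\|_{W^{1,\infty}}\|u\|_{X_T}$; this is precisely where the hypothesis $v\in L^\infty([0,\infty);W^{1,\infty})$ is used rather than merely $L^\infty$. The time integral is once more $\int_0^t(t-s)^{-3/4}\,ds=4t^{1/4}$, and multiplication by $t^{1/2}$ produces a contribution bounded by $CT^{3/4}\|v\|_{W^{1,\infty}}\|u\|_{X_T}\le CT^{1/2}\|v\|_{W^{1,\infty}}\|u\|_{X_T}$ since $T\le 1$. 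Summing the three contributions for each of $\Delta^{3/2}$ and $\Delta^2$ yields \eqref{20210729eq01}.
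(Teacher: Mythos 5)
Your proposal is correct and follows essentially the same route as the paper: you split the degree-$6$ operator $\Delta^{2}\circ\Delta$ acting on the nonlinear Duhamel term into a degree-$3$ operator falling on the semigroup (giving the borderline but integrable singularity $(t-s)^{-3/4}$) and a degree-$3$ operator falling on $au^{3}+bu^{2}+cu$, whose $s^{-1/4}$ singularity comes from the weighted $\Delta^{3/2}$ component of the $X_T$-norm; the Beta-function integral and the $t^{1/2}$ prefactor then close the estimate. The only cosmetic difference is that the paper realizes the degree-$3$ split as $\nabla\cdot\Delta$ on each side and bounds $\|\nabla\cdot\Delta(au^{3}+bu^{2}+cu)\|_{L^2}$ by an explicit Leibniz expansion followed by Gagliardo--Nirenberg (their equation \eqref{20210729eq14}), whereas you invoke the tame $H^{3}(\T^{2})$-algebra estimate $\|u^{k}\|_{H^{3}}\lesssim\|u\|_{H^{2}}^{k-1}\|u\|_{H^{3}}$ together with $\|u(s)\|_{H^{3}}\lesssim s^{-1/4}\|u\|_{X_T}+\|u\|_{X_T}$; both yield the same pointwise-in-$s$ bound and the same final inequality.
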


\begin{proof}
We first prove 
\begin{eqnarray} \label{20210729eq11}
 && \sup_{0 \le t \le T}  t^{\frac{1}{4}}\left\|\Delta^{\frac{3}{2}} u \right\|_{L^2}  \nonumber  \\
 && \quad \quad \le C \left( \|u_0\|_{H^2}+ t^{\frac{1}{2}} \left( \|u\|_{X_T}^3+ \|u\|_{X_T}^2+  \left(1+\|v\|_{L^\infty([0, \infty); W^{1, \infty})}\right) \|u\|_{X_T} \right) \right). 
\end{eqnarray} 
Indeed, by Lemma \ref{20210728lem01} and \eqref{20210727eq02} again, we have
\begin{eqnarray*}
&&\left\| \Delta^{\frac{3}{2}} \calT(u)(t) \right\|_{L^2} \le  \left\|\Delta^{\frac{1}{2}} e^{-t\varepsilon \Delta^2} \left(\Delta u_0 \right) \right\|_{L^2}+ \int_0^t \left\|\Delta^{\frac{3}{2}} e^{-(t-s)\varepsilon \Delta^2} (v \cdot \nabla u) \right\|_{L^2} ds\\
&& \quad \quad \quad \quad \quad \quad \quad \quad \quad \quad \quad +\int_0^t \left\|\Delta^{\frac{3}{2}} e^{-(t-s)\varepsilon \Delta^2} \Delta \left(a u^3+b u^2+cu \right) \right\|_{L^2} ds \\
&& \quad \quad  \le  Ct^{-\frac{1}{4}} \|u_0\|_{H^2} + C\int_0^t (t-s)^{-\frac{3}{4}} \left\|\Delta \left(a u^3+b u^2+cu \right) \right\|_{L^2} ds \\
&& \quad \quad \quad \quad \quad \quad \quad +C\int_0^t (t-s)^{-\frac{3}{4}} \|v \cdot \nabla u \|_{L^2} ds \\ 
&& \quad \quad \le  C t^{-\frac{1}{4}} \|u_0\|_{H^2}+ C \int_0^t (t-s)^{-\frac{3}{4}} ds \cdot \left(|a| \|u\|_{X_t}^3+|b| \|u\|_{X_T}^2+|c|\|u\|_{X_T} \right) \\
&&  \quad \quad \quad \quad \quad \quad \quad +C \int_0^t (t-s)^{-\frac{3}{4}}ds \cdot \|v\|_{L^\infty([0, \infty); L^\infty)} \|u\|_{X_T} \\
&& \quad \quad \le  C \bigg( t^{-\frac{1}{4}} \|u_0\|_{H^2}+ t^{\frac{1}{4}} \cdot \Big(|a| \|u\|_{X_T}^3+|\gamma_1| \|u\|_{X_T}^2 \\
&& \quad \quad \quad \quad \quad \quad \quad  \quad \quad \quad  \quad \quad  \quad  \quad \quad \quad \quad   +(1+\|v\|_{L^\infty([0, \infty); L^\infty)}) \|u\|_{X_T} \Big) \bigg).
\end{eqnarray*}
The desired estimate \eqref{20210729eq11} then follows from by multiplying $t^{\frac{1}{4}}$ on both sides of the above estimate.

Now we turn to prove the second part, that is
\begin{eqnarray} \label{20210729eq12}
 && \sup_{0 \le t \le T}  t^{\frac{1}{2}}\left\|\Delta^2 u \right\|_{L^2}  \nonumber \\ 
 && \quad \quad \le C \left( \|u_0\|_{H^2}+ t^{\frac{1}{2}} \left(  \|u\|_{X_T}^3+\|u\|_{X_T}^2+ \left(1+\|v\|_{L^\infty([0, \infty); W^{1, \infty})}\right) \|u\|_{X_T} \right) \right). 
\end{eqnarray} 
Note that
\begin{eqnarray*} 
\nabla \cdot \Delta (au^3+bu^2+cu)%
&=& 6a u \left(\nabla \cdot u \right) \Delta u+3a u^2 \nabla \cdot \Delta u  \nonumber \\
&& + 6a (\nabla \cdot u) |\nabla u|^2+12a u \sum_{i, j=1}^n \frac{\partial u}{\partial x_i} \frac{\partial^2 u}{\partial x_i \partial x_j}  \nonumber \\
&& +2b\sum_{i, j=1}^n \frac{\partial u}{\partial x_i} \frac{\partial^2 u}{\partial x_i \partial x_j}+2b \left(\nabla \cdot u \right) \Delta u \nonumber \\
&& +2b u \nabla \cdot \Delta u+c\nabla \cdot \Delta u. 
\end{eqnarray*}
This further gives
\begin{eqnarray*} 
&& \left\|\nabla \cdot \Delta (au^3+bu^2+cu) \right\|_{L^2}  \nonumber \\
&& \quad \quad \quad  \quad \le  18 |a| \left\|u \right\|_{L^\infty} \left\|\nabla u \right\|_{L^\infty} \|\Delta u\|_{L^2}+3|a| \|u\|_{L^\infty}^2 \left\| \Delta^{\frac{3}{2}} u \right\|_{L^2} \nonumber \\
&& \quad \quad \quad \quad \quad \quad +6|a| \left\|\nabla u \right\|_{L^\infty} \left\|\nabla u \right\|_{L^2}+4|b| \left\|\nabla u \right\|_{L^\infty} \|\Delta u \|_{L^2} \nonumber \\
&& \quad \quad \quad \quad \quad \quad +2|b| \|u\|_{L^\infty} \left\|\Delta^{\frac{3}{2}} u \right\|_{L^2}+|c|\left\|\Delta^{\frac{3}{2}} u \right\|_{L^2}
\end{eqnarray*}
This, together with \eqref{20210729eq20}, \eqref{20210729eq21} and the Gagliardo-Nirenberg's inequality 
\begin{equation} \label{20210729eq31} 
\|\nabla u \|_{L^\infty} \le C \left(\left\| \Delta^{\frac{3}{2}} u \right\|_{L^2}^{\frac{2}{3}} \|u\|_{L^2}^{\frac{1}{3}}+\|u\|_{L^2} \right) \le C \left\|\Delta^{\frac{3}{2}} u \right\|_{L^2},  
\end{equation} 
suggests 
\begin{equation} \label{20210729eq14} 
\left\|\nabla \cdot \Delta (au^3+bu^2+cu) \right\|_{L^2}  \le C \left(|a|\|u\|_{H^2}^2+(|a|+|b|) \|u\|_{H^2}+|c| \right) \left\|\Delta^{\frac{3}{2}} u \right\|_{L^2}. 
\end{equation}
On the other hand, we have 
\begin{eqnarray} \label{20210730eq41}
\left\|\nabla \cdot (v \cdot \nabla u) \right\|_{L^2} %
&=& \left\| \sum_{i=1}^n \left(\nabla \cdot v_i \right) u_{x_i}+\sum_{i=1}^n v_i \nabla \cdot u_{x_i} \right\|_{L^2} \nonumber \\
&\le& C\|v\|_{L^\infty([0, T); W^{1, \infty})} \|u\|_{H^2}. 
\end{eqnarray} 
Therefore, by \eqref{20210729eq14} and \eqref{20210730eq41}, we have
\begin{eqnarray*}
&& \left\|\Delta^2 \calT(u)(t) \right\|_{L^2} \le \left\|\Delta e^{-t\varepsilon \Delta^2} \left(\Delta u_0 \right) \right\|_{L^2}+\int_0^t \left\|\nabla \cdot \Delta e^{-(t-s)\varepsilon\Delta^2} \nabla (v \cdot \nabla u) \right\|_{L^2} ds \\
&&  \quad \quad \quad \quad \quad \quad \quad  +\int_0^t \left\|\nabla \cdot \Delta e^{-(t-s)\varepsilon \Delta^2} \left[ \nabla \cdot \Delta \left(a u^3+b u^2+cu \right) \right] \right\|_{L^2} ds \\
&&  \le C t^{-\frac{1}{2}}\|u_0\|_{H^2}+\int_0^t \left\|\nabla \cdot \Delta e^{-(t-s)\varepsilon \Delta^2}  \right\|_{L^2 \to L^2} \left\| \nabla \cdot \Delta \left(a u^3+b u^2+cu \right) \right\|_{L^2} ds \\
&& \quad \quad \quad \quad \quad \quad \quad +\int_0^t \left\|\nabla \cdot \Delta e^{-(t-s)\varepsilon \Delta^2}  \right\|_{L^2 \to L^2} \left\| \nabla \cdot (v \cdot \nabla u) \right\|_{L^2} ds \\
&& \le C t^{-\frac{1}{2}}\|u_0\|_{H^2}+ \int_0^t  (t-s)^{-\frac{3}{4}}  \cdot \left\| \nabla \cdot \Delta \left(a u^3+b u^2+cu \right) \right\|_{L^2} ds \\ 
&& \quad \quad \quad \quad \quad \quad \quad +\int_0^t (t-s)^{-\frac{3}{4}} \left\|\nabla \cdot \left(v \cdot \nabla u \right) \right\|_{L^2} ds \\
&& \le C t^{-\frac{1}{2}} \|u_0\|_{H^2} + \int_0^t (t-s)^{-\frac{3}{4}} \|v\|_{L^\infty([0, T); W^{1, \infty})} \|u\|_{H^2}  ds\\
&& \quad \quad +C \int_0^t (t-s)^{-\frac{3}{4}} s^{-\frac{1}{4}} \cdot  \left(\|u(s)\|_{H^2}^2+ \|u(s)\|_{H^2}+1 \right) \left(s^{\frac{1}{4}} \left\|\Delta^{\frac{3}{2}} u(s)  \right\|_{L^2} \right) ds \\
&& \le Ct^{-\frac{1}{2}}\|u_0\|_{H^2}+C \left(  \|u\|_{X_T}^3+\|u\|_{X_T}^2+ (1+t^{\frac{1}{4}} \|v\|_{L^\infty([0, \infty); W^{1, \infty})} )\|u\|_{X_T} \right) 
\end{eqnarray*} 
which implies the desired estimate \eqref{20210729eq12}. The estimate \eqref{20210729eq01} then clearly follows from \eqref{20210729eq11} and \eqref{20210729eq12}. 
\end{proof}

Combing Lemma \ref{20210728lem02} and Lemma \ref{20210728lem03}, we have the following result. 

\begin{lem} \label{20210730lem01}
For any $0<T \le 1$. The map $\calT: X_T \to X_T$ and there exists some $C_1=C_1(\varepsilon, a, b, c)>0$, such that
\begin{eqnarray*} 
&& \left\|\calT(u) \right\|_{X_T} \le C_1 \bigg( \|u_0\|_{H^2} \\ 
&& \quad \quad \quad \quad \quad \quad \quad \quad + T^{\frac{1}{2}} \left(  \|u\|_{X_T}^3+ \|u\|_{X_T}^2+  \left(1+\|v\|_{L^\infty([0, \infty); W^{1, \infty})}\right) \|u\|_{X_T} \right) \bigg).  
\end{eqnarray*}
\end{lem}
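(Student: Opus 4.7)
The plan is to combine Lemma \ref{20210728lem02} and Lemma \ref{20210728lem03} directly, since together they already control both pieces of the $X_T$-norm. Recall that by definition
$$
\left\|\calT(u)\right\|_{X_T}=\max\!\left(\sup_{0\le t\le T}\left\|\calT(u)(t)\right\|_{H^2},\ \sup_{0\le t\le T}\left(t^{1/4}\left\|\Delta^{3/2}\calT(u)(t)\right\|_{L^2}+t^{1/2}\left\|\Delta^2\calT(u)(t)\right\|_{L^2}\right)\right),
$$
so it suffices to estimate each of the two pieces separately and then take the larger constant.

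First, I would apply Lemma \ref{20210728lem02} to control the first piece. The estimate \eqref{20200728eq10} gives, for every $t\in(0,T]$,
$$
\left\|\calT(u)(t)\right\|_{H^2}\le C\!\left(\|u_0\|_{H^2}+t^{1/2}\left(\|u\|_{X_T}^3+\|u\|_{X_T}^2+(1+\|v\|_{L^\infty W^{1,\infty}})\|u\|_{X_T}\right)\right).
$$
Since $T\le 1$, we have $t^{1/2}\le T^{1/2}$, and taking the supremum in $t$ yields the desired form for $\sup_{0\le t\le T}\|\calT(u)(t)\|_{H^2}$. The continuity of $\calT(u)$ at $t=0$ with values in $H^2$ is already contained in Lemma \ref{20210728lem02}, so $\calT(u)\in C([0,T];H^2)$.

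Next, I would apply Lemma \ref{20210728lem03}, whose conclusion \eqref{20210729eq01} is exactly the bound required for the second piece of the $X_T$-norm; in particular, it shows the supremum is finite, so $\calT(u)$ lies in the $X_T$ space. Taking $C_1$ to be the maximum of the two constants produced by Lemmas \ref{20210728lem02} and \ref{20210728lem03}, and noting that $T^{1/2}$ dominates the smaller time-powers $T^{3/4}$ and $T$ appearing in the intermediate estimates (again using $T\le 1$), we obtain the stated inequality. I do not anticipate a genuine obstacle here: the lemma is a bookkeeping combination, and the only thing to be careful about is unifying the various powers of $T$ appearing in the gradient and $L^2$ estimates of $\calT(u)$ via the bound $T\le 1$, so that a single $T^{1/2}$ factor suffices on the right-hand side.
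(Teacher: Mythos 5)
Your proposal is correct and matches the paper's approach exactly: the paper itself derives Lemma \ref{20210730lem01} by simply combining Lemma \ref{20210728lem02} (which controls $\sup_t\|\calT(u)(t)\|_{H^2}$) with Lemma \ref{20210728lem03} (which controls the weighted $\Delta^{3/2}$ and $\Delta^2$ pieces), taking the larger constant and using $T\le 1$ to unify the time powers. There is nothing further to add.
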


Next, we show that $\calT$ is a Lipschitz map on $X_T$. 

\begin{lem} \label{20210730lem02}
Let $0<T \le 1$. Then there exists a constant $C_2=C_2(\varepsilon, a, b, c)>0$, such that for $u_1, u_2 \in X_T$, 
\begin{eqnarray} \label{20210730eq01}
&& \|\calT(u_1)-\calT(u_2) \|_{X_T} \nonumber \\
&& \quad \quad \le  C_2 T^{\frac{1}{2}} \left[\left(\|u_1\|_{X_T}+\|u_2\|_{X_T}+1 \right)^2+\|v\|_{L^\infty([0, \infty); W^{1, \infty})}  \right]   \left\|u_1-u_2 \right\|_{X_T}. 
\end{eqnarray} 
\end{lem}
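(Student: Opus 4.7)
The approach mirrors the proofs of Lemma \ref{20210728lem02} and Lemma \ref{20210728lem03}. Since the initial data term $e^{-t\varepsilon\Delta^2}u_0$ in \eqref{mildsoleq01} is independent of $u$, it cancels in the difference, leaving
\begin{eqnarray*}
\calT(u_1)(t)-\calT(u_2)(t) &=& \int_0^t e^{-(t-s)\varepsilon\Delta^2}\Delta\bigl[a(u_1^3-u_2^3)+b(u_1^2-u_2^2)+c(u_1-u_2)\bigr]\, ds \\
&& -\int_0^t e^{-(t-s)\varepsilon\Delta^2}\bigl(v\cdot\nabla(u_1-u_2)\bigr)\, ds.
\end{eqnarray*}
I would then use the algebraic factorizations $u_1^3-u_2^3=(u_1-u_2)(u_1^2+u_1u_2+u_2^2)$ and $u_1^2-u_2^2=(u_1-u_2)(u_1+u_2)$, so that every term in the nonlinearity carries a factor $u_1-u_2$ multiplied by a polynomial of degree at most $2$ in $(u_1,u_2)$. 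This reduces the required bounds to estimates of the exact same shape as those appearing in Lemma \ref{20210728lem02} and Lemma \ref{20210728lem03}, with one copy of $u$ replaced by $u_1-u_2$ and the rest by $u_1$ or $u_2$.

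For the $H^2$ piece of the $X_T$-norm, I would apply Lemma \ref{20210728lem01} with $s=2$ to obtain the kernel $(t-s)^{-1/2}$. Distributing $\Delta$ across the factorized nonlinearity via the product rule, and applying the Gagliardo--Nirenberg inequalities \eqref{20210729eq20} and \eqref{20210729eq21} to each resulting factor, yields the pointwise bound
$$
\bigl\|\Delta[a(u_1^3-u_2^3)+b(u_1^2-u_2^2)+c(u_1-u_2)]\bigr\|_{L^2} \le C\bigl(\|u_1\|_{H^2}+\|u_2\|_{H^2}+1\bigr)^2\|u_1-u_2\|_{H^2}.
$$
The transport term is controlled directly by $\|v\|_{L^\infty([0,\infty);L^\infty)}\|u_1-u_2\|_{H^2}$. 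Integrating $(t-s)^{-1/2}$ up to $t\le T$ then contributes the factor $T^{1/2}$. The lower-order pieces ($L^2$ and $\nabla$-bounds) are handled identically and give higher powers of $T$, which are absorbed.

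For the $t^{1/4}\|\Delta^{3/2}\cdot\|_{L^2}$ and $t^{1/2}\|\Delta^2\cdot\|_{L^2}$ components I would repeat the argument of Lemma \ref{20210728lem03} verbatim, using kernels $(t-s)^{-3/4}$ and $(t-s)^{-3/4}s^{-1/4}$ coming from Lemma \ref{20210728lem01}, so that after the time integration and multiplication by the $X_T$-weight $t^{1/4}$ or $t^{1/2}$ one again obtains a factor $T^{1/2}$. The main obstacle is the bookkeeping for the $\Delta^2$ estimate: expanding $\nabla\cdot\Delta$ of the multilinear expression via the product rule produces a large sum of terms of the form $P(u_1,u_2)\cdot Q(\partial^\alpha(u_1-u_2))$ and $R(u_1-u_2)\cdot S(\partial^\alpha(u_1+u_2))$, where at most one factor carries three derivatives while the others sit in $L^\infty$ and are controlled by \eqref{20210729eq20}, \eqref{20210729eq21} and \eqref{20210729eq31}. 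Every such term fits into the envelope $(\|u_1\|_{X_T}+\|u_2\|_{X_T}+1)^2\,\|u_1-u_2\|_{X_T}$. Combined with the transport contribution (estimated as in \eqref{20210730eq41}, giving the linear factor $\|v\|_{L^\infty([0,\infty);W^{1,\infty})}\|u_1-u_2\|_{X_T}$), this yields \eqref{20210730eq01}.
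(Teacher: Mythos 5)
Your proposal is correct and follows essentially the same route as the paper: the paper's proof also drops the cancelled initial-data term, establishes the difference bound \eqref{20210730eq02} (your factorization of $u_1^3-u_2^3$ and $u_1^2-u_2^2$ is exactly what underlies the paper's two sub-estimates), and then mirrors Lemmas \ref{20210728lem02}--\ref{20210728lem03} with kernels $(t-s)^{-1/2}$, $(t-s)^{-3/4}$, and $(t-s)^{-3/4}s^{-1/4}$ for the three components of the $X_T$-norm, together with \eqref{20210730eq41} for the transport term.
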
 

\begin{proof}
To begin with, we note that by \eqref{20210729eq20} and \eqref{20210729eq21}, one can compute that
\begin{eqnarray} \label{20210730eq02} 
&&\left\|\Delta \left(au_1^3+b u_1^2+cu_1-\left(au_2^3+bu_2^2+cu_2\right) \right) \right\|_{L^2} \nonumber \\
&& \quad \quad \quad \quad \quad \quad \quad \quad \quad \le C\left(\|u_1\|_{H^2}+\|u_2\|_{H_2}+1 \right)^2 \|u_1-u_2\|_{H^2}, 
\end{eqnarray}
which is a consequence of the following estimates:
\begin{enumerate}
    \item [(1).] $
    \left\|\Delta(u_1^3-u_2^3) \right\|_{L^2} \le C \left(\|u_1\|_{H^2}^2+\|u_1\|_{H^2}\|u_2\|_{H^2}+\|u_2\|_{H^2}^2 \right) \|u_1-u_2\|_{H^2}$;  
    
    \medskip
    
    \item [(2).]
    $ \left\|\Delta(u_1^2-u_2^2) \right\|_{L^2} \le C \left(\|u_1\|_{H^2}+\|u_2\|_{H^2} \right) \|u_1-u_2\|_{H^2}$. 
\end{enumerate}
Using \eqref{20210730eq02}, we have for any $0 \le t \le T$, 
\begin{eqnarray} \label{20210730eq03}
&& \left\|\calT(u_1)(t)-\calT(u_2)(t) \right\|_{H^2} \nonumber \\
&& \quad \le \int_0^t \left\|\Delta e^{-(t-s)\varepsilon \Delta^2} \Delta \left(au_1^3+b u_1^2+cu_1-\left(au_2^3+bu_2^2+cu_2\right) \right) \right\|_{L^2} ds  \nonumber \\
&& \quad \quad \quad \quad +\int_0^t \left\|\Delta e^{-(t-s) \varepsilon \Delta^2} \left( v \cdot \nabla (u_1-u_2) \right) \right\|_{L^2} ds \nonumber \\ 
&& \quad \le C \int_0^t (t-s)^{-\frac{1}{2}} \left(\|u_1(s)\|_{H^2}+\|u_2(s)\|_{H_2}+1 \right)^2 \|u_1(s)-u_2(s)\|_{H^2} ds \nonumber \\
&& \quad \quad \quad \quad +\int_0^t (t-s)^{-\frac{1}{2}} \|v\|_{L^\infty([0, \infty); W^{1, \infty})} \|u_1(s)-u_2(s)\|_{H^2} ds \nonumber \\ 
&& \quad \le C \int_0^t (t-s)^{-\frac{1}{2}} ds \cdot \left(\|u_1\|_{X_T}+\|u_2\|_{X_T}+1 \right)^2  \left\|u_1-u_2 \right\|_{X_T}  \nonumber \\
&& \quad \quad \quad \quad +\int_0^t (t-s)^{-\frac{1}{2}} ds \cdot  \|v\|_{L^\infty([0, \infty); W^{1, \infty})} \|u_1-u_2\|_{X_T} ds \nonumber \\ 
&& \quad \le  Ct^{\frac{1}{2}} \cdot \left[\left(\|u_1\|_{X_T}+\|u_2\|_{X_T}+1 \right)^2+\|v\|_{L^\infty([0, \infty); W^{1, \infty})}  \right]    \left\|u_1-u_2 \right\|_{X_T}, 
\end{eqnarray}
and
\begin{eqnarray*}
&&\left\| \Delta^{\frac{3}{2}} \left(\calT(u_1)-\calT(u_2) \right)(t) \right\|_{L^2}  \\
&& \quad \le C \int_0^t (t-s)^{-\frac{3}{4}} \left(\|u_1(s)\|_{H^2}+\|u_2(s)\|_{H_2}+1 \right)^2 \|u_1(s)-u_2(s)\|_{H^2} ds \nonumber \\
&& \quad \quad \quad \quad + C \int_0^t (t-s)^{-\frac{3}{4}} \|v\|_{L^\infty([0, \infty); W^{1, \infty})} \|u_1(s)-u_2(s)\|_{H^2} ds \nonumber \\ 
&& \quad \le C \int_0^t (t-s)^{-\frac{3}{4}} ds \cdot \left(\|u_1\|_{X_T}+\|u_2\|_{X_T}+1 \right)^2  \left\|u_1-u_2 \right\|_{X_T}  \nonumber \\
&& \quad \quad \quad \quad + C \int_0^t (t-s)^{-\frac{3}{4}} ds \cdot  \|v\|_{L^\infty([0, \infty); W^{1, \infty})} \|u_1-u_2\|_{X_T} \nonumber \\ 
&& \quad \le  Ct^{\frac{1}{4}} \cdot \left[\left(\|u_1\|_{X_T}+\|u_2\|_{X_T}+1 \right)^2+\|v\|_{L^\infty([0, \infty); W^{1, \infty})}  \right]   \left\|u_1-u_2 \right\|_{X_T}, 
\end{eqnarray*}
which implies
\begin{eqnarray} \label{20210730eq04}
&& t^{\frac{1}{4}} \left\| \Delta^{\frac{3}{2}} \left(\calT(u_1)-\calT(u_2) \right)(t) \right\|_{L^2}  \nonumber \\
&& \quad \le  Ct^{\frac{1}{2}} \cdot \left[\left(\|u_1\|_{X_T}+\|u_2\|_{X_T}+1 \right)^2+\|v\|_{L^\infty([0, \infty); W^{1, \infty})}  \right]    \left\|u_1-u_2 \right\|_{X_T}. 
\end{eqnarray}

Next, we claim that
\begin{eqnarray} \label{20210730eq05}
&& t^{\frac{1}{2}} \left\|\Delta^2 \left(\calT(u_1)-\calT(u_2) \right)(t) \right\|_{L^2} \nonumber \\
&& \quad \le Ct^{\frac{1}{2}} \cdot \left[\left(\|u_1\|_{X_T}+\|u_2\|_{X_T}+1 \right)^2+\|v\|_{L^\infty([0, \infty); W^{1, \infty})}  \right]   \left\|u_1-u_2 \right\|_{X_T}. 
\end{eqnarray} 
Indeed, by \eqref{20210729eq20}, \eqref{20210729eq21}, \eqref{20210729eq31} and the definition of $X_T$, one can check that for each $0 \le s \le t$, 
\begin{eqnarray*} 
&& \left\|\nabla \cdot \Delta \left(au_1(s)^3+b u_1(s)^2+cu_1(s)-\left(au_2(s)^3+bu_2(s)^2+cu_2(s)\right) \right) \right\|_{L^2} \nonumber \\
&& \quad \quad \quad \quad \quad \quad \quad \quad \quad \quad\quad \quad \quad  \le C s^{-\frac{1}{4}} \left(\|u_1\|_{X_T}+\|u_2\|_{X_T}+1 \right)^2  \left\|u_1-u_2 \right\|_{X_T}, 
\end{eqnarray*} 
which implies 
\begin{eqnarray*}
&& \left\|\Delta^2 \left(\calT(u_1)-\calT(u_2) \right)(t) \right\|_{L^2} \\
&& \quad \quad \quad \le \int_0^t \left\| \nabla \cdot \Delta e^{-(t-s)\varepsilon \Delta^2} \nabla (v \cdot \nabla (u_1(s)-u_2(s))) \right\|_{L^2} \\
&& \quad \quad \quad \quad \quad \quad + \int_0^t \bigg\| \nabla \cdot \Delta  e^{-(t-s)\varepsilon \Delta^2} \nabla \cdot \Delta   \bigg [au_1(s)^3+b u_1(s)^2+cu_1(s) \\
&& \quad \quad \quad \quad \quad \quad \quad \quad  \quad\quad \quad \quad\quad \quad \quad  \quad  -\left(au_2(s)^3+bu_2(s)^2+cu_2(s)\right) \bigg] \bigg \|_{L^2} ds \\
&& \quad \quad \quad   \le C \int_0^t (t-s)^{-\frac{3}{4}} s^{-\frac{1}{4}} ds \cdot \left(\|u_1\|_{X_T}+\|u_2\|_{X_T}+1 \right)^2  \left\|u_1-u_2 \right\|_{X_T} \\
&& \quad \quad \quad \quad \quad \quad + \int_0^t (t-s)^{-\frac{3}{4}} ds \cdot \|v\|_{L^\infty([0, \infty); W^{1, \infty})} \|u_1-u_2\|_{X_T} \\ 
&& \quad \quad \quad   \le C \left[\left(\|u_1\|_{X_T}+\|u_2\|_{X_T}+1 \right)^2+t^{\frac{1}{4}} \|v\|_{L^\infty([0, \infty); W^{1, \infty})} \right) \left\|u_1-u_2 \right\|_{X_T}.
\end{eqnarray*}
This clearly gives \eqref{20210730eq05}. Finally, the desired estimate \eqref{20210730eq01} follows from \eqref{20210730eq03}, \eqref{20210730eq04} and \eqref{20210730eq05}. 
\end{proof}

\begin{proof} [Proof of Theorem \ref{20210728thm01}]
Let $C:=\max\{C_1, C_2\}$ and $M:=10C\|u_0\|_{H^2}$, where $C_1$ and $C_2$ are defined in Lemma \ref{20210730lem01} and Lemma \ref{20210730lem02}, respectively. Let further, $\B(0, M)$ be the ball in $X_T$ centered at origin with radius $M$ with 
$$
0<T<\min \left\{1, \frac{1}{16C^2(M^2+M+1+\|v\|_{L^\infty([0, \infty); W^{1, \infty})} )^2}\right\}. 
$$
It is then easy to see that
$$
\left\|\calT(u)\right\|_{X_T} \le M, \quad \forall u \in \B(0, M)
$$
and
$$
\left\|\calT(u_1)-\calT(u_2) \right\|_{X_T} \le \frac{1}{2} \|u_1-u_2\|_{X_T}, \quad \forall u_1, u_2 \in \B(0, M). 
$$
An application of Banach contraction mapping theorem yields Theorem \ref{20210728thm01}.  
\end{proof}

As a consequence, we have the following corollary.

\begin{cor} \label{20210731cor01} 
Under the assumption of Theorem \ref{20210728thm01}, if $\tilde{T}$ is the maximal time of existence of the mild solution $u=\calT(u)$, then 
$$
\limsup_{t \to \tilde{T}_{-}} \|u(t)\|_{H^2}=\infty. 
$$
Otherwise, $\tilde{T}=\infty$. 
\end{cor}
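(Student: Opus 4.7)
This is the standard continuation/blow-up alternative, which I would prove by contradiction combined with one fresh application of the local existence Theorem \ref{20210728thm01}. Assume $\tilde{T}<\infty$ and that $\limsup_{t\to\tilde{T}^-}\|u(t)\|_{H^2}<\infty$; equivalently, pick $K<\infty$ with $\|u(t)\|_{H^2}\le K$ for all $t\in[0,\tilde{T})$. The goal is to produce a mild solution on an interval strictly larger than $[0,\tilde{T})$, contradicting maximality.

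The key point is that the local existence time produced by Theorem \ref{20210728thm01} depends on the initial data only through $\|u_0\|_{H^2}$ and on the velocity only through $\sup_{t\ge 0}\|v\|_{W^{1,\infty}}$, but not on the initial time. So I would fix $t_0\in(0,\tilde T)$, set $\tilde u_0:=u(t_0)\in H^2$ with $\|\tilde u_0\|_{H^2}\le K$, and consider the shifted velocity $\tilde v(\tau,\cdot):=v(t_0+\tau,\cdot)$, which satisfies $\sup_{\tau\ge 0}\|\tilde v\|_{W^{1,\infty}}\le\sup_{t\ge 0}\|v\|_{W^{1,\infty}}$. Applying Theorem \ref{20210728thm01} to this shifted Cauchy problem produces $T_*=T_*(\varepsilon,a,b,c,\sup_t\|v\|_{W^{1,\infty}},K)>0$ and a mild solution $\tilde u\in X_{T_*}$ starting from $\tilde u_0$. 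Crucially, $T_*$ is independent of the choice of $t_0$, so I can select $t_0$ with $\tilde T-T_*<t_0<\tilde T$.

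Next I would concatenate: define $w(t):=u(t)$ for $t\in[0,t_0]$ and $w(t):=\tilde u(t-t_0)$ for $t\in[t_0,t_0+T_*]$. I need to verify two things. First, $w$ is a mild solution of \eqref{maineq} on $[0,t_0+T_*]$ with initial data $u_0$; this is a direct computation using the semigroup property $e^{-(t-s)\varepsilon\Delta^2}=e^{-(t-t_0)\varepsilon\Delta^2}e^{-(t_0-s)\varepsilon\Delta^2}$ to split the Duhamel integrals in \eqref{mildsoleq01} at $s=t_0$, then substituting the mild-solution identity satisfied by $u$ at time $t_0$ into the first piece. Second, $w$ extends $u$, meaning $w(t)=u(t)$ for all $t\in[0,\tilde T)$; since both $u\big|_{[t_0,\tilde T)}$ and $\tilde u(\cdot-t_0)\big|_{[t_0,\min(\tilde T, t_0+T_*))}$ are mild solutions in $X_{T'}$ (for any $T'<\min(\tilde T-t_0,T_*)$) to the same Cauchy problem with data $\tilde u_0$, the uniqueness clause of Theorem \ref{20210728thm01} forces them to agree. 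Since $t_0+T_*>\tilde T$, this contradicts the definition of $\tilde T$ as the maximal time of existence.

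The argument has no serious obstacle; the only mildly delicate point is keeping the local existence time uniform in $t_0$, which is exactly what I highlighted above and which follows by inspecting the dependence of the radius $M$ and time $T$ chosen in the proof of Theorem \ref{20210728thm01}. No new estimates are needed beyond those already established.
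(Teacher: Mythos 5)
Your proposal is the standard continuation argument (uniform local existence time for $H^2$-bounded data, restart from a time $t_0$ close to $\tilde T$, concatenate via the semigroup property, invoke uniqueness in $X_{T'}$ to see the concatenation extends $u$), which is exactly the argument the paper defers to by citing \cite[Corollary~2.7]{FM19} and \cite[Corollary~2.6]{FHX21}. It is correct, and your explicit check that the existence time in the proof of Theorem~\ref{20210728thm01} depends on the data only through $\|u_0\|_{H^2}$ (via $M=10C\|u_0\|_{H^2}$ and the choice of $T$) and on the velocity only through $\sup_{t\ge 0}\|v\|_{W^{1,\infty}}$ is the one point worth spelling out, which you do.
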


\begin{proof}
The proof for the above result is standard. For example, one can consult \cite[Corollary 2.7]{FM19} (see, also \cite[Corollary 2.6]{FHX21}). 
\end{proof}

\begin{rem} \label{20211015rem02} 
\begin{enumerate}
\item [(1). ] Corollary \ref{20210731cor01} can be viewed as a supplement to \cite[Theorem 3.1]{ES86}, which asserts that in the non-advective case (namely, ${\bf v} \equiv 0$), if the Landau-Ginzurg free energy  of the initial data is sufficiently negative, then the $H^2$-energy (more precisely, $L^2$-energy) of the solution will blow up in finite time (see, Remark \ref{20211026rem01}); while Corollary \ref{20210731cor01} gives that if the mild solution with $H^2$ initial data exists in finite time, then its $H^2$-norm has to blow up. 

\medskip

\item [(2).] Via a standard approximation argument, one can also check the mild solution $u$ constructed in Theorem \ref{20210728thm01} is also a \emph{weak solution} on $[0, T)$ (see, e.g., \cite[Proposition 2.8]{FM19} and \cite[Proposition 2.9]{FHX21}). Here, a function $\widetilde{u} \in L^\infty([0, T]; H^2)$ is called a \emph{weak solution} of \eqref{maineq} on $[0, T]$ with initial data $\widetilde{u}_0 \in H^2$ if for all $\phi \in C_c^\infty ([0, T) \times \T^2)$, 
\begin{eqnarray*}
&& \int_{\T^2} \widetilde{u}_0\phi(0)dxdy+\int_0^T \int_{\T^2} \widetilde{u} \partial_t \phi dxdydt =\int_0^T \varphi(v \cdot \nabla \widetilde{u}) dxdydt \\
&& \quad \quad \quad +\varepsilon \int_0^T \int_{\T^2} \phi \Delta^2 \widetilde{u} dxdydt-\int_0^T \int_{\T^2} \phi \Delta \left(a u^3+bu^2+cu \right) dxdydt
\end{eqnarray*}
and $\partial_t \widetilde{u} \in L^2([0, T]; H^{-2})$. Moreover, $u$ also satisfies the following energy identity: for any $0 \in [0, T)$, 
\begin{eqnarray} \label{energyid}
&& \|u(0)\|_{L^2}+2\int_0^T \int_{\T^2} u \Delta\left(a u^3+b u^2+cu \right) dxdydt \nonumber \\
&& \quad \quad \quad \quad \quad \quad \quad =\|u(t)\|_{L^2}^2+2\varepsilon \int_0^t \|\Delta u(s)\|_{L^2}^2 ds. 
\end{eqnarray} 

\medskip

\item [(3).] We claim  for each $t>0$, $\partial_t u(t, \cdot) \in L^2(\T^2)$. Indeed, by standard argument from spectral theory, one can verify that in the weak sense, 
\begin{equation} \label{20210801eq01}
\quad \quad \quad  \partial_t u(t)=- \varepsilon \Delta^2 u(t)+\Delta(a u^3(t)+b u(t)^2+cu(t))-{\bf v} \cdot \nabla u(t). 
\end{equation} 
Note that the last two terms in the right hand side of \eqref{20210801eq01} belongs to $L^2$, while for the first term, by Theorem \ref{20210728thm01}, we already know that
$$
t^{\frac{1}{2}} \left\|\Delta^2 u \right\|_{L^2}<\infty.
$$
The desired claim is then immediate. 
\end{enumerate} 
\end{rem}

\section{Bootstrap assumptions} \label{bootassumption} 

We now turn to the proof of Theorem \ref{mainthm}, which we recall deals with the global existence of the mild solutions to the following \emph{unstable} Cahn-Hilliard equation:
$$
\begin{cases}
u_t+v_1(y) \partial_x u+\varepsilon \gamma \Delta^2 u= \gamma \Delta(a u^3+ b u^2)  \quad & \quad  \textrm{on} \quad \T^2;  \\
\\
u \ \textrm{periodic} \quad & \quad \textrm{on} \quad  \partial \T^2, 
\end{cases}
$$
with $\varepsilon, \gamma>0, a<0, b \in \R$, $v_1 \in W^{1, \infty}(\T)$ and the initial condition $u_0 \in H^2_0(\T^2)$. The goal of this section is to establish the \emph{bootstrap assumptions} to the above problem. 

 Let $u$ be the unique mild solution on $[0, T]$ to the problem \eqref{maineq02} constructed in Theorem \ref{20210728thm01} and recall that 
\begin{equation} \label{20210827eq17}
\langle u \rangle(t, y):=\int_{\T} u(t, x, y) dx \quad \textrm{and} 
\quad u_{\notparallel} (t, x, y):=u(t, x, y)-\langle u \rangle (t, y), 
\end{equation} 
Note that both $\langle u \rangle$ and $u_{\notparallel}$ have mean zero. Using Theorem \ref{20210728thm01}, we have
$$
\langle u \rangle \in L_{loc}^2 \left((0, T); H^4(\T) \right) \cap C\left([0, T); H^2(\T) \right) 
$$
and
$$
u_{\notparallel} \in L_{loc}^2 \left((0, T); H^4(\T^2) \right) \cap C\left([0, T); H^2(\T^2) \right). 
$$
Moreover, using \eqref{maineq02}, one can check that $\langle u \rangle$ and $u_{\notparallel}$ solve the following coupled system:
\begin{equation} \label{20210822eq03}
\partial_t \langle u \rangle+\varepsilon \gamma \partial_y^4 \langle u \rangle= \gamma \int_{\T} \Delta \left[a\left(\langle u \rangle+u_{\notparallel} \right)^3+b \left(\langle u \rangle+u_{\notparallel} \right)^2\right] dx 
\end{equation}
and
\begin{eqnarray} \label{20210822eq04}
&& \partial_t u_{\notparallel}+v_1(y)\partial_x u_{\notparallel}+\varepsilon \gamma \Delta^2 u_{\notparallel}=\gamma \Delta \left[a\left(\langle u \rangle+u_{\notparallel} \right)^3+b \left(\langle u \rangle+u_{\notparallel} \right)^2\right]  \nonumber \\
&& \quad \quad \quad \quad \quad \quad \quad \quad  \quad \quad \quad  \quad  \quad \quad  \quad - \gamma \int_{\T} \Delta \left[a\left(\langle u \rangle+u_{\notparallel} \right)^3+b \left(\langle u \rangle+u_{\notparallel} \right)^2\right] dx. 
\end{eqnarray} 
Let $\calS_t$ be the solution operator form $0$ to time $t \ge 0$ for the advection-hyperdiffusion equation:
$$
\partial_t g+v_1(y)\partial_x g+\varepsilon \gamma \Delta^2 g=0.
$$
Namely, $\calS_t=e^{-tH_{\gamma}}$, where we recall that $H_\gamma$ is defined in \eqref{20210822eq05}. Then by the Duhamel's formula, we have for any $0 \le s \le t$, 
\begin{eqnarray} \label{20210823eq01}
u_{\notparallel}(t)%
&=& \calS_{t-s} \left(u_{\notparallel}(s) \right)  
 +\gamma \int_s^t \calS_{t-\tau} \left( \Delta \left[a\left(\langle u \rangle+u_{\notparallel} \right)^3+b \left(\langle u \rangle+u_{\notparallel} \right)^2\right] \right) d\tau \nonumber \\
&& \quad \quad \quad -\gamma \int_s^t  \calS_{t-\tau} \left(\int_{\T}  \Delta \left[a\left(\langle u \rangle+u_{\notparallel} \right)^3+b \left(\langle u \rangle+u_{\notparallel} \right)^2\right] dx\right) d\tau. 
\end{eqnarray}

We have the following results.

\begin{lem} \label{20210827lem01}
There exists a sufficiently small time $0<t_1<T$, which only depends on $\varepsilon, a, b, \gamma, v_1, \|u_{\notparallel}(0)\|_{L^2}$ and $\|u\|_{L^2([0, T]; H^2)}$, such that for any $0 \le s \le t \le t_1$, one has
\begin{equation} \label{20210827eq10}
\|u_{\notparallel}(t)\|_{L^2} \le C e^{-\frac{ \lambda_{\gamma}(t-s)}{4}} \|u_{\notparallel}(s)\|_{L^2}, 
\end{equation} 
where $\lambda_{\gamma}$ is defined in \eqref{20210822eq02}. 
\end{lem}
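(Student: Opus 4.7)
The plan is to combine the Duhamel representation \eqref{20210823eq01} for $u_{\notparallel}$ with the dissipation-enhancement estimate of Proposition \ref{20210823prop01}. Writing $F_{\notparallel}(\tau) := \left(\Delta[a u^3 + b u^2]\right)_{\notparallel}$ and noting that both $u_{\notparallel}$ and $F_{\notparallel}$ are mean-zero in $x$ and globally (since $u_0$ has zero mean and both $\Delta$ and the $\notparallel$-projection preserve this), Proposition \ref{20210823prop01} yields
\[
\|u_{\notparallel}(t)\|_{L^2} \le 10\, e^{-\lambda_\gamma(t-s)}\|u_{\notparallel}(s)\|_{L^2} + 10\gamma \int_s^t e^{-\lambda_\gamma(t-\tau)} \|F_{\notparallel}(\tau)\|_{L^2}\, d\tau.
\]
The linear semigroup contribution is already dominated by $10\, e^{-\lambda_\gamma(t-s)/4}\|u_{\notparallel}(s)\|_{L^2}$, so the whole task reduces to controlling the Duhamel integral.

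The key structural observation is that $F_{\notparallel}$ factors through $u_{\notparallel}$. Expanding
\[
a u^3 + b u^2 = \bigl(a\langle u\rangle^3 + b\langle u\rangle^2\bigr) + u_{\notparallel}\bigl(3a\langle u\rangle^2 + 2b\langle u\rangle\bigr) + u_{\notparallel}^2\bigl(3a\langle u\rangle + b\bigr) + a u_{\notparallel}^3,
\]
the first bracket depends only on $y$, hence its Laplacian is still $y$-only and is killed by the $\notparallel$-projection. Each of the three remaining terms carries at least one factor of $u_{\notparallel}$, so the algebra property of $H^2(\T^2)$ in two dimensions produces a pointwise bound of the form
\[
\|F_{\notparallel}(\tau)\|_{L^2} \le C\bigl(1 + \|u(\tau)\|_{H^2}\bigr)^2 \|u_{\notparallel}(\tau)\|_{H^2}.
\]
By Theorem \ref{20210728thm01} the mild solution belongs to $C([0,T]; H^2)$, so the right-hand side is uniformly bounded on $[0,T]$ by a constant $K = K(\varepsilon, a, b, \gamma, v_1, \|u\|_{L^\infty([0,T];H^2)})$. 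Substituting and using $\int_s^t e^{-\lambda_\gamma(t-\tau)}\, d\tau \le t-s$ gives
\[
\|u_{\notparallel}(t)\|_{L^2} \le 10\, e^{-\lambda_\gamma(t-s)/4}\|u_{\notparallel}(s)\|_{L^2} + 10\gamma K (t - s).
\]

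To close the estimate, I would exploit the continuity of $s \mapsto \|u_{\notparallel}(s)\|_{L^2}$ inherited from $u \in C([0,T]; H^2)$: for $t_1$ small enough we have $\|u_{\notparallel}(s)\|_{L^2} \ge \tfrac{1}{2}\|u_{\notparallel}(0)\|_{L^2}$ for every $s \in [0, t_1]$. Shrinking $t_1$ further so that $10\gamma K t_1 \le 5\,\|u_{\notparallel}(0)\|_{L^2}\, e^{-\lambda_\gamma t_1/4}$ permits the additive error $10\gamma K(t-s)$ to be absorbed into a multiple of $e^{-\lambda_\gamma(t-s)/4}\|u_{\notparallel}(s)\|_{L^2}$, yielding the bound with, for instance, $C = 20$. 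The main obstacle is precisely this absorption step: without the factorization of $F_{\notparallel}$ through $u_{\notparallel}$, the additive error would be a fixed constant independent of $\|u_{\notparallel}\|_{L^2}$ and a purely multiplicative estimate would be impossible; this is also the source of the degenerate dependence of $t_1$ on $\|u_{\notparallel}(0)\|_{L^2}$ recorded in the statement.
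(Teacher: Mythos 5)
Your argument follows the same skeleton as the paper's proof: Duhamel for $u_{\notparallel}$, the enhanced dissipation of Proposition~\ref{20210823prop01} for the first term, a uniform $L^2$ bound on the nonlinear forcing from $u\in C([0,T];H^2)$ and Gagliardo--Nirenberg, an integral contribution of order $\gamma K(t-s)$, and finally absorption of that additive error into $e^{-\lambda_\gamma(t-s)/4}\|u_{\notparallel}(s)\|_{L^2}$ by shrinking $t_1$ and invoking continuity of $s\mapsto\|u_{\notparallel}(s)\|_{L^2}$ near $s=0$. The only cosmetic difference is that the paper bounds the Duhamel integrand with $\|\calS_t\|_{L^2\to L^2}\le 1$ rather than the exponential weight; both yield an $O(t-s)$ error. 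So the proposal is correct and essentially the same proof.

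Where you go off track is the commentary surrounding the ``key structural observation.'' You carefully expand $au^3+bu^2$ to show that $F_{\notparallel}$ carries a factor of $u_{\notparallel}$, but then you immediately discard that gain by bounding $C(1+\|u\|_{H^2})^2\|u_{\notparallel}\|_{H^2}$ by a time-uniform constant $K$. At that point the factorization is doing no work: you are in exactly the same position as the paper, which simply estimates $\|\Delta(u^3)\|_{L^2}\le C\|u\|_{H^2}^3$ and $\|\Delta(u^2)\|_{L^2}\le C\|u\|_{H^2}^2$ with no factorization at all. Consequently the final paragraph's claim, that ``without the factorization of $F_{\notparallel}$ through $u_{\notparallel}$ \dots a purely multiplicative estimate would be impossible,'' is false. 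The multiplicative form is recovered purely by the absorption step (additive error $\le 10\gamma K t_1$, combined with $\|u_{\notparallel}(s)\|_{L^2}\ge\tfrac12\|u_{\notparallel}(0)\|_{L^2}$ for $s\le t_1$), which is also the genuine source of the dependence of $t_1$ on $\|u_{\notparallel}(0)\|_{L^2}$; the paper achieves this without ever invoking the factorization. A factorization of the form $\|F_{\notparallel}\|_{L^2}\lesssim \|u_{\notparallel}\|_{L^2}$ would matter if one wanted a Gronwall-type closure avoiding the degenerate dependence on $\|u_{\notparallel}(0)\|_{L^2}$, but neither you nor the paper runs that argument, and your bound still has $\|u_{\notparallel}\|_{H^2}$ rather than $\|u_{\notparallel}\|_{L^2}$, so it would not close that way anyhow. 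One small shared omission (in both your write-up and the paper's) is the degenerate case $\|u_{\notparallel}(0)\|_{L^2}=0$, which should be dispatched separately by noting that $u_{\notparallel}\equiv 0$ is then preserved by the flow of \eqref{20210822eq04}.
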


\begin{proof}
Taking $L^2$-norm on both sides of \eqref{20210823eq01} , we have
\begin{eqnarray*}
&&\|u_{\notparallel}(t)\|_{L^2}\le  \left\|\calS_{t-s} \left(u_{\notparallel}(s) \right) \right\|_{L^2} \nonumber \\
&& \quad \quad \quad  +|a| \gamma \int_s^t \left\|\calS_{t-\tau} \left( \Delta \left(\langle u \rangle+u_{\notparallel} \right)^3-\int_{\T}\Delta \left(\langle u \rangle+u_{\notparallel} \right)^3 dx  \right)\right\|_{L^2} d\tau \nonumber \\
&& \quad \quad \quad +|b|\gamma \int_s^t  \left\|\calS_{t-\tau} \left(  \Delta \left(\langle u \rangle+u_{\notparallel} \right)^2-\int_{\T} \Delta \left(\langle u \rangle+u_{\notparallel} \right)^2 dx\right) \right\|_{L^2} d\tau.
\end{eqnarray*}
By Proposition \ref{20210823prop01} and the fact that $\left\|\calS_t \right\|_{L^2 \to L^2} \le 1$, we can further bound $\|u_{\notparallel}(t)\|_{L^2}$ by 
\begin{equation} \label{20210823eq10}
10e^{-\lambda_{\gamma}(t-s)}\|u_{\notparallel}(s)\|_{L^2}+ |a|\gamma I_1+ |b|\gamma I_2, 
\end{equation} 
where 
\begin{eqnarray*}
I_1%
&:=&  \int_s^t \left\|\Delta \left(\langle u \rangle+u_{\notparallel} \right)^3-\int_{\T} \Delta \left(\langle u \rangle+u_{\notparallel} \right)^3 dx \right\|_{L^2} d\tau \\
&=& \int_s^t \left\|\Delta(u^3)-\int_{\T} \Delta (u^3) dx \right\|_{L^2} d\tau
\end{eqnarray*}
and
\begin{eqnarray*}
I_2%
&:=&  \int_s^t \left\|\Delta \left(\langle u \rangle+u_{\notparallel} \right)^2-\int_{\T} \Delta \left(\langle u \rangle+u_{\notparallel} \right)^2 dx \right\|_{L^2} d\tau \\
&=& \int_s^t \left\|\Delta(u^2)-\int_{\T} \Delta (u^2) dx \right\|_{L^2} d\tau.
\end{eqnarray*}

\medskip

\textit{Estimate of $I_1$}. Note that
$$
\Delta(u^3)=3u^2 \Delta u+6u|\nabla u|^2.
$$
Thus, 
\begin{eqnarray} \label{20210827eq01}
\|\Delta(u^3)\|_{L^2}%
&\le& C \left[ \|u^2 \Delta u\|_{L^2}+\|u |\nabla u|^2\|_{L^2} \right] \nonumber \\
&\le& C \left[\|u\|_{L^\infty}^2 \|\Delta u\|_{L^2}+\|u\|_{L^2} \|\nabla u\|_{L^4}^2 \right] \nonumber \\
&\le&  C \left[\|\Delta u\|_{L^2}^2 \|u\|_{L^2}+\|u\|_{L^2}^{\frac{3}{2}} \|\Delta u\|_{L^2}^{\frac{3}{2}} \right] \nonumber \\
&\le& C\|u\|_{H^2}^3, 
\end{eqnarray}
where in the above estimates, we have used the Gagliardo–Nirenberg's inequalities in 2D:
$$
\|u\|_{L^\infty} \le C \|\Delta u\|_{L^2}^{\frac{1}{2}} \|u\|_{L^2}^{\frac{1}{2}} 
 \quad \textrm{and} \quad 
\left\|\nabla u \right\|_{L^4} \le C \|\Delta u\|_{L^2}^{\frac{3}{4}} \|u\|_{L^2}^{\frac{1}{4}}.
$$
Using \eqref{20210827eq01}, we have
$$
I_2 \le C \int_s^t \left\|\Delta(u^3) \right\|_{L^2} d\tau \le C \int_s^t \|u\|_{H^2}^3 d\tau \le C (t-s) \|u\|^3_{L^\infty\left(\left[0, T\right]; H^2 \right)},
$$
where in the last estimate above, we have used the fact that $0 \le s \le t \le T$. 

\medskip

\textit{Estimate of $I_2$.} The estimate of $I_2$ is similar to the one  of $I_1$. We first note that 
$$
\Delta(u^2)=2u\Delta u+2|\nabla u|^2,
$$
and hence we have 
\begin{eqnarray} \label{20210827eq20} 
\|\Delta(u^2)\|_{L^2}%
&\le& C\left[\|u\Delta u\|_{L^2}+\|\nabla u\|^2_{L^4}\right] \nonumber  \\
&\le& C \left[\|u\|_{L^\infty} \|\Delta u\|_{L^2}+\|\nabla u\|_{L^4}^2 \right] \nonumber \\
&\le& C\|\Delta u\|_{L^2}^{\frac{3}{2}} \|u\|_{L^2}^{\frac{1}{2}} \le C\|u\|_{H^2}^2.
\end{eqnarray}
This implies
$$
I_3 \le C \int_s^t \|\Delta(u^2)\|_{L^2} d\tau \le C\int_s^t \|u\|_{H^3}^2 d\tau \le C(t-s) \|u\|^2_{L^\infty\left(\left[0, T\right]; H^2 \right)}. 
$$

\medskip

Finally, combining all the estimates of $I_1$ and $I_2$ with \eqref{20210823eq10}, we see that 
\begin{eqnarray*}
\|u_{\notparallel}(t)\|_{L^2}%
&\le&  10e^{-\lambda_{\gamma}(t-s)} \|u_{\notparallel}(s)\|_{L^2} +C(t-s) \gamma \bigg[ \|u\|^3_{L^\infty([0, T]; H^2)} \\
&& \quad \quad \quad \quad \quad \quad \quad +|a|\|u\|_{L^\infty([0, T]; H^2)}^2+|b|\|u\|_{L^\infty([0, T]; H^2)} \bigg]. 
\end{eqnarray*}
The desired estimate \eqref{20210827eq10} then follows from by choosing $t_1$ sufficiently small and the continuity of $\|u(t)\|_{L^2}$ at $t=0$. 
\end{proof}

\begin{lem} \label{20210827lem02}
There exists a sufficiently small time $0<t_2<T$, such that for any $0 \le s \le t_2$, one has
\begin{equation} \label{20210827eq15}
\varepsilon \gamma \int_s^t \left\|\Delta u_{\notparallel}(\tau) \right\|_{L^2}^2 d\tau \le C \|u_{\notparallel}(s)\|_{L^2}^2.
\end{equation} 
Here, $t_2$ only depends on $\varepsilon, a, b, \gamma, v_1, \|u_{\notparallel}(0)\|_{L^2}$ and $\|u\|_{L^\infty([0, 1]; H^2)}$.
\end{lem}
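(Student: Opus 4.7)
The strategy is to run a standard $L^2$ energy estimate on equation \eqref{20210822eq04} for $u_{\notparallel}$. Testing against $u_{\notparallel}$, the transport term vanishes because $v_1(y)\partial_x$ is skew-adjoint in $L^2(\T^2)$, while the mean-subtraction $-\gamma\int_\T \Delta[\cdots]\,dx$ is orthogonal to $u_{\notparallel}$ (which has zero $x$-average). Two integrations by parts on the bi-Laplacian produce $\varepsilon\gamma\|\Delta u_{\notparallel}\|_{L^2}^2$, and one integration by parts on the nonlinear term leaves
\[
\tfrac{1}{2}\tfrac{d}{dt}\|u_{\notparallel}\|_{L^2}^2+\varepsilon\gamma\|\Delta u_{\notparallel}\|_{L^2}^2=\gamma\int_{\T^2}\Delta u_{\notparallel}\cdot(au^3+bu^2)\,dxdy.
\]

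The key observation is that $\Delta u_{\notparallel}$ itself has zero $x$-average, so on the right-hand side one may replace $au^3+bu^2$ by its $\notparallel$-part. Writing $u=\langle u\rangle+u_{\notparallel}$ and expanding, every monomial in $(au^3+bu^2)_{\notparallel}$ carries at least one factor of $u_{\notparallel}$; combined with the 2D Gagliardo--Nirenberg inequalities $\|f\|_{L^\infty}\le C\|f\|_{H^2}$, $\|f\|_{L^4}^2\le C\|f\|_{L^2}\|\nabla f\|_{L^2}$ and $\|f\|_{L^6}^3\le C\|f\|_{L^2}\|\nabla f\|_{L^2}^2$, a direct computation gives
\[
\|(au^3+bu^2)_{\notparallel}\|_{L^2}\le C\bigl(\|u\|_{H^2}^2+\|u\|_{H^2}\bigr)\|u_{\notparallel}\|_{L^2}.
\]
Cauchy--Schwarz together with Young's inequality then absorb half of the dissipation and leave the differential inequality
\[
\tfrac{d}{dt}\|u_{\notparallel}\|_{L^2}^2+\varepsilon\gamma\|\Delta u_{\notparallel}\|_{L^2}^2\le\tfrac{C\gamma}{\varepsilon}\bigl(\|u\|_{H^2}^2+\|u\|_{H^2}\bigr)^2\|u_{\notparallel}\|_{L^2}^2.
\]

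Integrating from $s$ to $t$, using the uniform $H^2$ control provided by Theorem \ref{20210728thm01} to bound the prefactor by some $M=M(\|u\|_{L^\infty([0,1];H^2)})$, and applying Lemma \ref{20210827lem01} on $[0,t_1]$ to estimate $\|u_{\notparallel}(\tau)\|_{L^2}\le C\|u_{\notparallel}(s)\|_{L^2}$ for $\tau\in[s,t]$, one obtains
\[
\varepsilon\gamma\int_s^t\|\Delta u_{\notparallel}(\tau)\|_{L^2}^2\,d\tau\le \Bigl(1+\tfrac{C'\gamma M(t-s)}{\varepsilon}\Bigr)\|u_{\notparallel}(s)\|_{L^2}^2.
\]
Choosing $t_2\le t_1$ sufficiently small (depending on $\varepsilon$, $\gamma$, $a$, $b$, $v_1$, $\|u\|_{L^\infty([0,1];H^2)}$ and, through the constant of Lemma \ref{20210827lem01}, on $\|u_{\notparallel}(0)\|_{L^2}$) yields the claimed bound \eqref{20210827eq15}.

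I expect the main difficulty to lie in the second paragraph: extracting the explicit $\|u_{\notparallel}\|_{L^2}$ factor from the nonlinearity. A naive Cauchy--Schwarz estimate of the form $\gamma\|\Delta u_{\notparallel}\|_{L^2}\|au^3+bu^2\|_{L^2}$ produces an additive term of the type $\tfrac{\gamma(t-s)M}{\varepsilon}$ after time integration, which is \emph{not} controllable by a multiple of $\|u_{\notparallel}(s)\|_{L^2}^2$ (and in particular degenerates when $u_{\notparallel}(s)$ is small). One must exploit the $\notparallel$-structure and verify that cross-terms such as $\langle u\rangle^2 u_{\notparallel}$, $\langle u\rangle\bigl(u_{\notparallel}^2-\langle u_{\notparallel}^2\rangle\bigr)$, and $u_{\notparallel}^3-\langle u_{\notparallel}^3\rangle$ all satisfy the desired quantitative estimate with the correct powers of $\|u\|_{H^2}$ and a single power of $\|u_{\notparallel}\|_{L^2}$.
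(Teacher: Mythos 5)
Your proof is correct, and it takes a genuinely different (and cleaner) route than the paper's. Both arguments start from the same energy identity, but they diverge in how the cubic/quadratic term is handled. The paper leaves $\Delta$ on the nonlinearity, bounds the right-hand side by $C\gamma\|u_{\notparallel}\|_{L^2}\bigl(\|u\|_{H^2}^3+\|u\|_{H^2}^2\bigr)$, and then \emph{discards} the factor $\|u_{\notparallel}\|_{L^2}$ by replacing it with $\|u\|_{H^2}$; after time-integration this produces the additive error $C\gamma(t-s)\bigl(\|u\|_{L^\infty H^2}^4+\|u\|_{L^\infty H^2}^3\bigr)$ you warned about. The paper absorbs that error into $\|u_{\notparallel}(s)\|_{L^2}^2$ by choosing $t_2$ small and invoking continuity, which implicitly requires $\|u_{\notparallel}(s)\|_{L^2}$ to stay bounded \emph{below} (close to $\|u_{\notparallel}(0)\|_{L^2}$) on $[0,t_2]$ — that is the real reason $t_2$ must depend on $\|u_{\notparallel}(0)\|_{L^2}$. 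Your version instead puts $\Delta$ on $u_{\notparallel}$, observes that $\Delta u_{\notparallel}$ has zero $x$-mean so that only $(au^3+bu^2)_{\notparallel}$ survives, and then checks (correctly, with Ladyzhenskaya and the $L^6$ interpolation) that every monomial in the decomposition $3\langle u\rangle^2u_{\notparallel}+3\langle u\rangle(u_{\notparallel}^2-\langle u_{\notparallel}^2\rangle)+(u_{\notparallel}^3-\langle u_{\notparallel}^3\rangle)+\cdots$ carries an explicit factor of $\|u_{\notparallel}\|_{L^2}$. Young then yields a \emph{multiplicative} Gronwall-type inequality $\varepsilon\gamma\int_s^t\|\Delta u_{\notparallel}\|_{L^2}^2\,d\tau\le\bigl(1+C'\gamma M(t-s)/\varepsilon\bigr)\|u_{\notparallel}(s)\|_{L^2}^2$, which degrades gracefully, needs no lower bound on $\|u_{\notparallel}(s)\|$, and makes the dependence of $t_2$ on $\|u_{\notparallel}(0)\|_{L^2}$ enter only through the requirement $t_2\le t_1$ from Lemma \ref{20210827lem01}. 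Your closing paragraph identifies exactly the soft spot in the paper's argument; your alternative is the tighter estimate.
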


\begin{proof}
Multiply \eqref{20210822eq04} by $u_{\notparallel}$ on both sides and then integrating by parts, we see that for any $t>0$,
\begin{eqnarray} \label{20210827eq21} 
&&\frac{1}{2} \frac{d}{dt} \|u_{\notparallel}(t)\|_{L^2}^2+\varepsilon \gamma \|\Delta u_{\notparallel}(t)\|_{L^2}^2 = a\gamma \int_{\T^2} u_{\notparallel}(t) \Delta(u^3)(t)dxdy \nonumber \\
&& \quad \quad \quad \quad  +b\gamma \int_{\T^2} u_{\notparallel}(t) \Delta (u^2)(t)dxdy  -a\gamma \int_{\T} \left(\int_{\T} u_{\notparallel}(t) dx \right) \left(\int_{\T} \Delta (u^3) dx \right) dy \nonumber \\
&& \quad \quad \quad \quad - b \gamma \int_{\T} \left(\int_{\T} u_{\notparallel}(t) dx \right) \left(\int_{\T} \Delta (u^2) dx \right) dy \nonumber \\
&& \quad = a\gamma \int_{\T^2} u_{\notparallel}(t) \Delta(u^3)(t)dxdy +b\gamma \int_{\T^2} u_{\notparallel}(t) \Delta (u^2)(t)dxdy \nonumber  \\
&& \quad \le  |a| \gamma \|u_{\notparallel}\|_{L^2} \|\Delta(u^3)\|_{L^2} +|b|\gamma \|u_{\notparallel}\|_{L^2} \|\Delta(u^2)\|_{L^2}, 
\end{eqnarray}
where in the last second line of the above estimate, we have used the fact that $\int_{\T} u_{\notparallel}(t) dx=0$.

Note that by \eqref{20210827eq01} and \eqref{20210827eq20} respectively, we have
$$
\|u_{\notparallel}\|_{L^2} \|\Delta(u^3)\|_{L^2} \le C \|u_{\notparallel}\|_{L^2} \|u\|_{H^2}^3
 \quad \textrm{and} \quad 
 \|u_{\notparallel}\|_{L^2} \|\Delta(u^2)\|_{L^2} \le C \|u_{\notparallel}\|_{L^2} \|u\|_{H^2}^2. 
$$
Hence, 
$$
\textrm{RHS of \eqref{20210827eq21}} \le C\gamma \left[  \|u_{\notparallel}\|_{L^2} \|u\|_{H^2}^3+ \|u_{\notparallel}\|_{L^2} \|u\|_{H^2}^2 \right].
$$
This gives 
\begin{eqnarray*}
\frac{\|u_{\notparallel}(t)\|_{L^2}^2}{2}+\varepsilon \gamma \int_s^t \|\Delta u_{\notparallel}(\tau) \|_{L^2}^2 d\tau%
&\le& \frac{\|u_{\notparallel}(s)\|_{L^2}^2}{2}+C\gamma \int_s^t \|u_{\notparallel}\|_{L^2} \|u\|_{H^2}^3 d\tau \\
&&+C\gamma \int_s^t \|u_{\notparallel}\|_{L^2} \|u\|_{H^2}^2 d\tau,
\end{eqnarray*}
and hence
\begin{eqnarray*}
\varepsilon \gamma \int_s^t \|\Delta u_{\notparallel}(\tau)\|_{L^2}^2 d\tau%
&\le& \frac{\|u_{\notparallel}(s)\|_{L^2}^2}{2}+ C\gamma (t-s) \Big( \|u\|^4_{L^\infty([0, 1]; H^2)}+ \\
&& \quad \quad \quad \quad \quad \quad  \quad \quad \quad  \|u\|^3_{L^\infty([0, 1]; H^2)} \Big). 
\end{eqnarray*} 
The desired estimate \eqref{20210827eq15} then follows from by choosing $t_2$ sufficiently small and the continuity of $\|u(t)\|_{L^2}$ at $t=0$. 
\end{proof}

The estimates \eqref{20210827eq10} and \eqref{20210827eq15} suggests for all sufficiently small time $t \ge s \ge 0$, we can assume that 
\begin{enumerate}
    \item [(1).] $\|u_{\notparallel}(t)\|_{L^2} \le 20 e^{-\frac{\lambda_{\gamma}(t-s)}{4}} \|u_{\notparallel}(s)\|_{L^2}$;
    \item [(2).] $\varepsilon \gamma \int_s^t \|\Delta u_{\notparallel}(\tau)\|_{L^2}^2 d\tau  \le 10 \|u_{\notparallel}(s)\|_{L^2}^2$.
\end{enumerate}
Finally, let $t_0>0$ is the maximal time such that estimates above hold on $[0, t_0]$, and we refer the two estimates above together with $t \in [0, t_0]$ as the \emph{bootstrap assumptions}. 

\begin{rem}
Let us clarify the role of the maximal time $t_0$ in the bootstrap assumptions above. 
The estimates in Lemmas \ref{20210827lem01} and \ref{20210827lem02} show that the two bootstrap assumptions hold on a sufficiently short time interval; hence $t_0>0$ is well-defined. The main goal of the rest of the paper is to prove, under the assumptions of Theorem \ref{mainthm}, that in fact $t_0=+\infty$.

The argument is by contradiction. Suppose that $t_0<+\infty$. Then the main task is to show that, on $[0,t_0]$, the bootstrap estimates can be improved: the constants $20$ and $10$ in the two bootstrap assumptions above can be replaced by strictly smaller constants. By the continuity of the corresponding norms and the absolute continuity of the time-integrated dissipation term at $t_0$, these improved estimates imply that the original bootstrap assumptions (with constants $20$ and $10$) remain valid on a slightly larger interval $[0,t_0+\epsilon]$, for some $\epsilon>0$. This contradicts the maximality of $t_0$, and therefore one must have $t_0=+\infty$. We refer the reader to the proof of Theorem \ref{mainthm} in Section \ref{bootest}  for the precise closing argument.
\end{rem}

\medskip 

Before entering the technical details, we summarize our proof of the bootstrap argument in Figure \ref{fig:bootstrap-roadmap}. 
The bootstrap assumptions first yield uniform control of the shear-invariant component through Proposition \ref{avgprop}. This control is then used in Propositions \ref{bootestprop1} and \ref{bootestprop3}, which improve the two bootstrap estimates. The strict improvement closes the bootstrap and forces $t_0=+\infty$.

\begin{figure}[htbp]
\centering
\begin{tikzpicture}[
    font=\footnotesize, scale=.95,
    box/.style={
        rectangle,
        rounded corners,
        draw=black!60,
        thick,
        align=center,
        text width=0.78\textwidth,
        inner sep=6pt
    },
    arrow/.style={
        -{Latex[length=2.4mm]},
        thick
    }
]

\node[box] (boot) at (0,0) {
\textbf{Proof by contradiction: bootstrap assumptions on $[0,t_0]$ for some $t_0$ being finite}\\[1mm]
The two bootstrap estimates for $u_{\notparallel}$ hold with constants $20$ and $10$.
};

\node[box] (avg) at (0,-1.75) {
\textbf{Step 1: Control of the shear-invariant component}\\[1mm]
Proposition \ref{avgprop} gives uniform bounds for $\langle u\rangle$ under the bootstrap assumptions.
};

\node[box] (diss) at (0,-3.68) {
\textbf{Step 2: Improvement of the integrated dissipation estimate}\\[1mm]
Proposition \ref{bootestprop1} improves the second bootstrap estimate, replacing the constant $10$ by the sharper constant $5$.
};

\node[box] (decay) at (0,-5.73) {
\textbf{Step 3: Improvement of the decay estimate}\\[1mm]
Proposition \ref{bootestprop3} improves the first bootstrap estimate, replacing the constant $20$ by the sharper constant $15$. 
};

\node[box] (close) at (0,-7.60) {
\textbf{Closing the bootstrap}\\[1mm]
The improved estimates are strictly stronger than the original bootstrap assumptions.
};

\node[box] (conclusion) at (0,-9.50) {
\textbf{Conclusion}\\[1mm]
By continuity, the original bootstrap assumptions extend beyond $t_0$, contradicting the maximality of $t_0$. Therefore the maximal time $t_0$ is infinite.
};

\draw[arrow] (boot) -- (avg);
\draw[arrow] (avg) -- (diss);
\draw[arrow] (diss) -- (decay);
\draw[arrow] (decay) -- (close);
\draw[arrow] (close) -- (conclusion);

\end{tikzpicture}
\caption{Roadmap of the bootstrap argument.}
\label{fig:bootstrap-roadmap}
\end{figure}

\section{Uniform bounds of $\langle u \rangle$}

As a consequence of the bootstrap assumptions, we show that the terms 
$$
\left\|\langle u\rangle \right\|_{L^2} \qquad \textrm{and} \qquad \varepsilon \gamma \int_0^t \left\|\partial_y^2 \langle u \rangle  \right\|_{L_y^2}^2 d\tau
$$
can be bounded uniformly when $\gamma$ is sufficiently small and the $L^2$-energy of $\langle u_0 \rangle$ is sufficiently small. Motivated by \cite{FHXZ21}, we show such uniform bounds by applying a version of \emph{small energy method} under the setting of shear flows. 

\begin{lem} [A prior estimate] \label{priopest} 
Assume the bootstrap assumptions, $|a|<1$ and moreover, there exists a positive number $0<\widetilde{t_0} \le t_0$, such that
\begin{equation} \label{20211016eq01}
\left\| \langle u \rangle (t, \cdot) \right\|_{L_y^2} \le K, \quad \textrm{for all} \quad t \in [0, \widetilde{t_0}]
\end{equation}
where 
\begin{equation} \label{20211016eq02}
K \le \min \left\{1, \left(\frac{\varepsilon \lambda_1^2}{4{\bf B_1}} \right)^{\frac{3}{8}} \right\},
\end{equation} 
Here, $t_0$ is the maximal time defined in the bootstrap assumptions, $\lambda_1$ is the smallest positive eigenvalue of $-\Delta$ on $\T$ and ${\bf B_1}>1$ is an absolute constant which only depends on $\varepsilon,  b$ and any dimensional constants. 

Then the following estimate holds: there exists some constant ${\bf B_2}>1$, which only depends on $\varepsilon, b$ and any dimensional constants, such that 
\begin{eqnarray*}
\left\|\langle u \rangle \left(\widetilde{t_0} \right)\right\|_{L_y^2}^2 %
&\le&  {\bf B_2} \exp \left( {\bf B_2} \|u_{\notparallel}(0)\|_{L^2}^4 \right) \bigg( a^2 \left\|u_{\notparallel}(0) \right\|_{L^2}^6 \nonumber \\
&&  \quad \quad \quad \quad + \left(\frac{\gamma}{\lambda_\gamma} \right)^{\frac{1}{2}} \left\|u_{\notparallel}(0) \right\|_{L^2}^4+\left\|\langle u \rangle(0) \right\|_{L_y^2}^2 \bigg).
\end{eqnarray*}
\end{lem}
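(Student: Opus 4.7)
The plan is to perform an energy estimate on $\langle u \rangle$ directly from \eqref{20210822eq03}, using the bootstrap assumptions (together with Proposition~\ref{20210823prop01}-style control) to treat $u_{\notparallel}$ as a driving source, while the smallness hypothesis \eqref{20211016eq02} absorbs the unstable self-interactions in $\langle u\rangle$. Observe first that only $\partial_y^2$ survives after $\int_{\T}(\cdot)\,dx$ (the $\partial_x^2$ part being a total $x$-derivative), and that terms linear in $u_{\notparallel}$ vanish because $\int_{\T} u_{\notparallel}\, dx = 0$. Expanding $(\langle u\rangle+u_{\notparallel})^2$ and $(\langle u\rangle+u_{\notparallel})^3$, multiplying the projected equation by $\langle u \rangle$, integrating over $\T$, and integrating by parts twice gives
\begin{equation*}
\tfrac{1}{2}\tfrac{d}{dt}\|\langle u\rangle\|_{L_y^2}^2 + \varepsilon \gamma \|\partial_y^2 \langle u\rangle\|_{L_y^2}^2 = \gamma \int_{\T} \partial_y^2 \langle u\rangle \cdot \bigl[a\Phi_3(y) + b \Phi_2(y)\bigr]\,dy,
\end{equation*}
where $\Phi_3(y) := \langle u\rangle^3 + 3\langle u\rangle \|u_{\notparallel}\|_{L_x^2}^2 + \int_{\T} u_{\notparallel}^3\,dx$ and $\Phi_2(y) := \langle u\rangle^2 + \|u_{\notparallel}\|_{L_x^2}^2$.

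The heart of the argument is to show that the self-interaction pieces $a\langle u\rangle^3$ and $b\langle u\rangle^2$ inside $\Phi_3, \Phi_2$ can be absorbed into the dissipation $\varepsilon\gamma\|\partial_y^2 \langle u\rangle\|^2$ using \eqref{20211016eq02}. For the $b\langle u\rangle^2$ term I would apply Cauchy--Schwarz together with the 1D mean-zero Gagliardo--Nirenberg inequality $\|\langle u\rangle\|_{L_y^4}^2 \leq C\|\partial_y^2 \langle u\rangle\|^{1/4}\|\langle u\rangle\|^{7/4}$ to obtain the bound $C\gamma|b|\|\partial_y^2 \langle u\rangle\|^{5/4}\|\langle u\rangle\|^{7/4}$, then Young's inequality with conjugate exponents $(8/5, 8/3)$ to split this as $\tfrac{\varepsilon \gamma}{8}\|\partial_y^2 \langle u\rangle\|^2 + {\bf B_1}\gamma \|\langle u\rangle\|^{14/3}$, with ${\bf B_1}$ depending only on $\varepsilon, b$. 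Writing $\|\langle u\rangle\|^{14/3} \leq K^{8/3}\|\langle u\rangle\|^2$ and using Poincar\'e ($\|\partial_y^2 \langle u\rangle\|^2 \geq \lambda_1^2\|\langle u\rangle\|^2$), the hypothesis $K^{8/3} \leq \varepsilon\lambda_1^2/(4{\bf B_1})$ is precisely what re-absorbs the residue into $\tfrac{\varepsilon \gamma}{4}\|\partial_y^2 \langle u\rangle\|^2$. This is where the exponent $3/8$ in \eqref{20211016eq02} originates. The cubic self-interaction $a\langle u\rangle^3$ is handled analogously (via $\|\langle u\rangle\|_{L_y^6}^3 \leq C\|\partial_y^2 \langle u\rangle\|^{1/2}\|\langle u\rangle\|^{5/2}$), with $|a|<1$ keeping the prefactor under control.

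For the remaining mixed and purely-$u_{\notparallel}$ contributions, I would use Cauchy--Schwarz to peel off $\|\partial_y^2 \langle u\rangle\|$, absorb a further fraction of the dissipation through Young, and control the $u_{\notparallel}$-dependent factors using the 2D Gagliardo--Nirenberg inequalities $\|u_{\notparallel}\|_{L^4}^4 \leq C\|u_{\notparallel}\|^3\|\Delta u_{\notparallel}\|$ and $\|u_{\notparallel}\|_{L^6}^6 \leq C\|u_{\notparallel}\|^4\|\Delta u_{\notparallel}\|^2$. Together with $\|\langle u\rangle\|_{L_y^\infty} \leq C\|\partial_y^2 \langle u\rangle\|^{1/4}\|\langle u\rangle\|^{3/4}$ applied to the mixed term $3a\langle u\rangle\|u_{\notparallel}\|_{L_x^2}^2$, this produces a differential inequality
\begin{equation*}
\tfrac{d}{dt}\|\langle u\rangle\|_{L_y^2}^2 + \tfrac{\varepsilon \gamma \lambda_1^2}{2}\|\langle u\rangle\|_{L_y^2}^2 \leq \mathcal{A}(t)\|\langle u\rangle\|_{L_y^2}^2 + \mathcal{B}(t),
\end{equation*}
in which $\mathcal{A}(t)$ encodes the Gr\"onwall feedback from the mixed term and $\mathcal{B}(t) \lesssim \gamma a^2\|u_{\notparallel}\|^4\|\Delta u_{\notparallel}\|^2 + \gamma b^2 \|u_{\notparallel}\|^3 \|\Delta u_{\notparallel}\|$ collects the purely-$u_{\notparallel}$ source.

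To close, I would apply Gr\"onwall on $[0, \widetilde{t_0}]$. Bootstrap (2) gives $\varepsilon\gamma\int_0^{\widetilde{t_0}}\|\Delta u_{\notparallel}\|^2\,d\tau \leq 10\|u_{\notparallel}(0)\|^2$ while bootstrap (1) yields $\int_0^{\widetilde{t_0}} \|u_{\notparallel}\|^p\,d\tau \lesssim \|u_{\notparallel}(0)\|^p/\lambda_\gamma$ for $p\geq 2$. Cauchy--Schwarz in time then gives $\int \mathcal B\, d\tau \lesssim a^2\|u_{\notparallel}(0)\|^6 + (\gamma/\lambda_\gamma)^{1/2}\|u_{\notparallel}(0)\|^4$ and $\int\mathcal A\, d\tau \lesssim \|u_{\notparallel}(0)\|^4$, matching the statement. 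The main obstacle will be choosing the Young pairings in the mixed term $3a\langle u\rangle\|u_{\notparallel}\|_{L_x^2}^2$ so that $\mathcal A$ depends on $u_{\notparallel}$ exactly through the combination $\|u_{\notparallel}\|^2\|\Delta u_{\notparallel}\|^2$, whose time integral is controlled by a $(\gamma, \lambda_\gamma)$-independent multiple of $\|u_{\notparallel}(0)\|^4$; this is what yields the clean $\exp({\bf B_2}\|u_{\notparallel}(0)\|^4)$ Gr\"onwall factor in the final bound without extraneous $\gamma$-dependence in the exponent.
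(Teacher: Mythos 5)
Your plan is essentially the paper's proof: multiply the projected equation by $\langle u\rangle$, observe that only the $\partial_y^2$ piece survives the $x$-average and that cross terms linear in $u_{\notparallel}$ vanish, split into the self-interaction of $\langle u\rangle$, the mixed term, and the pure-$u_{\notparallel}$ source, absorb the self-interaction into the dissipation via the smallness of $K$ and 1D Poincar\'e, and then Gr\"onwall with the mixed term as the multiplicative coefficient $\mathcal A(t)$. The final bootstrap-based time integrals (Cauchy--Schwarz in time against $e^{-\lambda_\gamma\tau}$, bootstrap (2) for $\gamma\int\|\Delta u_{\notparallel}\|^2$) are exactly the ones the paper uses to produce the $\exp({\bf B_2}\|u_{\notparallel}(0)\|^4)$ factor, the $a^2\|u_{\notparallel}(0)\|^6$ term, and the $(\gamma/\lambda_\gamma)^{1/2}\|u_{\notparallel}(0)\|^4$ term.

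The one place where your proposed route diverges — and you flag it yourself as ``the main obstacle'' — is the mixed term $J_2 = 3a\gamma\int_{\T^2}\partial_y^2\langle u\rangle\,\langle u\rangle\, u_{\notparallel}^2\,dx\,dy$. Applying $\|\langle u\rangle\|_{L_y^\infty}\lesssim\|\partial_y^2\langle u\rangle\|^{1/4}\|\langle u\rangle\|^{3/4}$ here reintroduces a fractional power of $\|\partial_y^2\langle u\rangle\|$ that you must re-Young away, and whichever pairing you choose, the Gr\"onwall coefficient $\mathcal A$ comes out proportional to $\gamma a^{8/3}\|\Delta u_{\notparallel}\|^{4/3}\|u_{\notparallel}\|^4$ rather than $\gamma a^2\|\Delta u_{\notparallel}\|^2\|u_{\notparallel}\|^2$. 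The time integral of the former picks up an extra $(\gamma/\lambda_\gamma)^{1/3}$ and a different power of $\|u_{\notparallel}(0)\|$, so the exponential factor you get does not have the clean form $\exp({\bf B_2}\|u_{\notparallel}(0)\|^4)$ asserted in the lemma (it would still be quantitatively sufficient, but Proposition 4.2 is calibrated to the stated form). The fix is the paper's: do \emph{not} touch $\langle u\rangle$ with an $L^\infty$ estimate at all in $J_2$. After Cauchy--Schwarz and Young you are left with $Ba^2\gamma\int_{\T^2}\langle u\rangle^2 u_{\notparallel}^4$, and you simply bound $\int_{\T^2}\langle u\rangle^2 u_{\notparallel}^4\le\|u_{\notparallel}\|_{L^\infty}^4\|\langle u\rangle\|_{L_y^2}^2\le B\|\Delta u_{\notparallel}\|^2\|u_{\notparallel}\|^2\|\langle u\rangle\|_{L_y^2}^2$ via the 2D Gagliardo--Nirenberg inequality for $\|u_{\notparallel}\|_{L^\infty}$. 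That yields exactly $\mathcal A=Ba^2\gamma\|\Delta u_{\notparallel}\|^2\|u_{\notparallel}\|^2$ and closes the argument as stated. Everything else in your sketch (including your treatment of $J_1$ by Cauchy--Schwarz plus 1D Gagliardo--Nirenberg, which gives the same powers as the paper's extra integration by parts, and your plan for $J_3$, $J_4$) lines up with the paper.
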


\begin{proof} 
We begin with recalling that the constant $B$ that we are going to use in the proof might change line by line, but will only depend on $\varepsilon, b$ and any dimensional constants. It is important that $B$ is \emph{independent} of the choice of $a$ and $\gamma$. 

Multiplying $\langle u \rangle$ on both sides of \eqref{20210822eq03} and then integration by parts, we have
\begin{eqnarray} \label{20210829eq500} 
&& \frac{1}{2} \frac{d}{dt} \|\langle u \rangle \|_{L_y^2}^2+ \varepsilon \gamma \left\|\partial_y^2 \langle u \rangle \right\|_{L_y^2}^2 \nonumber \\
&& \quad \quad \quad =\gamma \int_{\T} \langle u\rangle \left(\int_{\T} \Delta \left[a\left(\langle u \rangle+u_{\notparallel} \right)^3+b \left(\langle u \rangle+u_{\notparallel} \right)^2\right] dx \right) dy \nonumber \\
&& \quad \quad \quad =\gamma \int_{\T} \langle u \rangle \left(\int_{\T} \partial_y^2 \left[a\left(\langle u \rangle+u_{\notparallel} \right)^3+b \left(\langle u \rangle+u_{\notparallel} \right)^2\right] dx \right) dy \nonumber \\
&& \quad \quad \quad =\gamma \int_{\T^2} \partial_y^2 \langle u \rangle \cdot  \left[ a(\langle u \rangle+u_{\notparallel})^3+b \left( \langle u \rangle+u_{\notparallel} \right)^2 \right]dxdy \nonumber \\
&& \quad \quad \quad :=J_1+J_2+J_3+J_4, 
\end{eqnarray}
where
$$
J_1:=\gamma \int_{\T} \partial_y^2 \langle u \rangle \left( a\langle u \rangle^3+b \langle u \rangle^2 \right)dy, \quad  J_2:=3a\gamma \int_{\T^2} \partial_y^2 \langle u \rangle \langle u \rangle u_{\notparallel}^2 dxdy, 
$$
$$
J_3:=a\gamma \int_{\T^2} \partial_y^2 \langle u \rangle u_{\notparallel}^3 dxdy
 \quad \textrm{and} \quad 
J_4:=b\gamma \int_{\T^2} \partial_y^2 \langle u \rangle u_{\notparallel}^2 dxdy.
$$
Note that here we have used the fact that
$$
\int_{\T^2} \partial_y^2 \langle u \rangle \langle u \rangle^2 u_{\notparallel}dxdy=\int_{\T^2}  \partial_y^2 \langle u \rangle \langle u \rangle u_{\notparallel}dxdy=0. 
$$

\medskip

\textit{Estimate of $J_1$.}  By Young's inequality and the Gagliardo-Nirenberg’s inequalities in 2D:
$$
\left\| \partial_y \langle u \rangle \right\|_{L_y^4} \le B \left\|\partial_y^2 \langle u \rangle \right\|_{L_y^2}^{\frac{5}{8}} \left\| \langle u \rangle \right\|_{L_y^2}^{\frac{3}{8}} 
 \quad \textrm{and} \quad 
\left\|\langle u \rangle \right\|_{L_y^4} \le B \left\| \partial_y^2 \langle u \rangle \right\|_{L_y^2}^{\frac{1}{8}} \left\| \langle u \rangle \right\|_{L_y^2}^{\frac{7}{8}},
$$
we have 
\begin{eqnarray*}
J_1%
&=& -\gamma \int_{\T} \left(\partial_y \langle u \rangle \right)^2  \cdot \left(3a\langle u \rangle^2+2b \langle u \rangle \right) dy \\
&\le& 3|a| \gamma \left\| \partial_y \langle u \rangle \right\|_{L_y^4}^2 \left\| \langle u \rangle \right\|_{L_y^4}^2+ 2|b| \gamma \left\|\partial_y \langle u \rangle \right\|_{L_y^4}^2 \left\| \langle u \rangle \right\|_{L_y^2} \\
&=& B|a|\gamma \left\|\partial_y^2 \langle u \rangle \right\|_{L_y^2}^{\frac{3}{2}} \left\| \langle u \rangle \right\|_{L_y^2}^{\frac{5}{2}} +B\gamma \left\|\partial_y^2 \langle u \rangle \right\|_{L_y^2}^{\frac{5}{4}} \left\| \langle u \rangle \right\|_{L_y^2}^{\frac{7}{4}} \\
&\le& \frac{\varepsilon \gamma}{8} \left\|\partial_y^2 \langle u \rangle \right\|_{L_y^2}^2+B(1+a^2)\gamma \left(\left\|\langle u\rangle\right\|_{L_y^2}^{10}+\left\|\langle u\rangle\right\|_{L_y^2}^{\frac{14}{3}} \right) \\
&\le& \frac{\varepsilon \gamma}{8} \left\| \partial_y^2 \langle u \rangle \right\|_{L_y^2}^2+B\gamma \cdot \left(K^8+K^{\frac{8}{3}} \right) \left\|\langle u \rangle \right\|_{L_y^2}^2 \\
&\le& \frac{\varepsilon \gamma}{8} \left\| \partial_y^2 \langle u \rangle \right\|_{L_y^2}^2+B \gamma K^{\frac{8}{3}}  \left\|\langle u \rangle \right\|_{L_y^2}^2 
\end{eqnarray*}
where in the second estimate above, we have used the assumption $|a|<1$ and in the second last estimate, we have used the assumption \eqref{20211016eq01}.  

\medskip

\textit{Estimate of $J_2$.} By Young's inequality, we have
\begin{eqnarray*}
J_2%
&\le& 3|a| \gamma \left\|\partial_y^2 \langle u \rangle\right\|_{L_y^2} \left(\int_{\T^2} \langle u \rangle^2 u_{\notparallel}^4 dxdy \right)^{\frac{1}{2}} \\ 
&\le& \frac{\varepsilon \gamma}{8} \left\|\partial_y^2 \langle u \rangle\right\|_{L_y^2}^2+Ba^2\gamma \int_{\T^2} \langle u \rangle^2 u_{\notparallel}^4 dxdy \\
&\le& \frac{\varepsilon \gamma}{8} \left\|\partial_y^2 \langle u \rangle\right\|_{L_y^2}^2+B\gamma \|u_{\notparallel}\|_{L^\infty}^4 \|\langle u \rangle\|_{L_y^2}^2 \\
&\le& \frac{\varepsilon \gamma}{8} \left\|\partial_y^2 \langle u \rangle\right\|_{L_y^2}^2+B\gamma \|\Delta u_{\notparallel}\|_{L^2}^2 \|u_{\notparallel}\|_{L^2}^2 \|\langle u \rangle\|_{L_y^2}^2, 
\end{eqnarray*}
where in the last estimate above, we have used the Gagliardo–Nirenberg's inequality in 2D:
$$
\|u_{\notparallel}\|_{L^\infty} \le B \|\Delta u_{\notparallel}\|_{L^2}^{\frac{1}{2}} \|u_{\notparallel}\|_{L^2}^{\frac{1}{2}}. 
$$

\medskip

\textit{Estimate of $J_3$.} By the Young's inequality and Gagliardo–Nirenberg's inequality in 2D:
$$
\|u_{\notparallel}\|_{L^6} \le B \|\Delta u_{\notparallel}\|_{L^2}^{\frac{1}{3}} \|u_{\notparallel}\|_{L^2}^{\frac{2}{3}}, 
$$
we have
\begin{eqnarray*}
J_3%
&\le& |a| \gamma \left\|\partial_y^2 \langle u \rangle \right\|_{L_y^2} \left\|u_{\notparallel}^3 \right\|_{L^2} \le \frac{\varepsilon \gamma}{8} \left\|\partial_y^2 \langle u \rangle\right\|_{L_y^2}^2+Ba^2\gamma \int_{\T^2} u_{\notparallel}^6 dxdy \\
&\le& \frac{\varepsilon \gamma}{8} \left\|\partial_y^2 \langle u \rangle\right\|_{L_y^2}^2+Ba^2 \gamma \|\Delta u_{\notparallel}\|_{L^2}^2 \|u_{\notparallel}\|_{L^2}^4.
\end{eqnarray*}

\medskip

\textit{Estimate of $J_4$.} The estimate of $J_4$ is similar to the one of $J_3$:
\begin{eqnarray*}
J_4%
&\le& |b| \gamma \left\|\partial_y^2 \langle u \rangle \right\|_{L_y^2} \left\|u_{\notparallel}^2 \right\|_{L^2} \le \frac{\varepsilon \gamma}{8} \left\|\partial_y^2 \langle u \rangle\right\|_{L_y^2}^2+B\gamma \int_{\T^2} u_{\notparallel}^4 dxdy \\ 
&\le&  \frac{\varepsilon \gamma}{8} \left\|\partial_y^2 \langle u \rangle\right\|_{L_y^2}^2+B\gamma \|\Delta u_{\notparallel}\|_{L^2} \|u_{\notparallel}\|_{L^2}^3,
\end{eqnarray*}
where we have used the Gagliardo–Nirenberg's inequality in 2D in the last estimate above:
$$
\|u_{\notparallel}\|_{L^4} \le B\|\Delta u_{\notparallel}\|_{L^2}^{\frac{1}{4}} \|u_{\notparallel}\|_{L^2}^{\frac{3}{4}}. 
$$

\medskip

Combining all the estimates of the terms $J_1, J_2, J_3$ and $J_4$ with \eqref{20210829eq500}, we have
\begin{eqnarray} \label{20210829eq51} 
&& \frac{1}{2} \frac{d}{dt} \|\langle u \rangle \|_{L_y^2}^2+ \frac{\varepsilon \gamma}{2} \left\|\partial_y^2 \langle u \rangle \right\|_{L_y^2}^2 \nonumber  \\
&&  \quad \quad \quad \quad  \le B\gamma K^{\frac{8}{3}} \left\|\langle u \rangle \right\|_{L_y^2}^2 + Ba^2\gamma \|\Delta u_{\notparallel}\|_{L^2}^2 \|u_{\notparallel}\|_{L^2}^2 \|\langle u \rangle\|_{L_y^2}^2 \nonumber \\
&& \quad \quad \quad \quad \quad \quad + Ba^2\gamma \|\Delta u_{\notparallel}\|_{L^2}^2 \|u_{\notparallel}\|_{L^2}^4 + B\gamma \|\Delta u_{\notparallel}\|_{L^2} \|u_{\notparallel}\|_{L^2}^3.
\end{eqnarray}
Note that by the Poincare's inequality in 1D and \eqref{20211016eq02}, we have 
$$
B\gamma K^{\frac{8}{3}} \left\| \langle u \rangle \right\|^2_{L_y^2} \le B \gamma \cdot \frac{\varepsilon \lambda_1^2}{4B} \cdot  \frac{1}{\lambda_1^2} \left\| \partial_y^2 \langle u \rangle \right\|_{L_y^2}^2=\frac{\varepsilon \gamma}{4} \left\| \partial_y^2 \langle u \rangle \right\|_{L_y^2}^2.
$$
This together with \eqref{20210829eq51} further gives
\begin{eqnarray} \label{20210829eq609}
&& \frac{1}{2} \frac{d}{dt} \|\langle u \rangle\|_{L_y^2}^2+\frac{\varepsilon \gamma}{4} \|\partial_y^2 \langle u \rangle \|_{L_y^2}^2 \le  B\gamma \big(a^2\|\Delta u_{\notparallel}\|_{L^2}^2 \|u_{\notparallel}\|_{L^2}^4 \nonumber \\
&& \quad \quad \quad \quad \quad \quad \quad  \quad + \|\Delta u_{\notparallel}\|_{L^2} \|u_{\notparallel}\|_{L^2}^3+\|\Delta u_{\notparallel}\|_{L^2}^2 \|u_{\notparallel}\|_{L^2}^2 \|\langle u \rangle\|_{L_y^2}^2 \big). 
\end{eqnarray}
In particular, we have the following ODE:
\begin{eqnarray} \label{20210829eq61}
&& \frac{1}{2} \frac{d}{dt} \|\langle u \rangle\|_{L^2}^2-B\gamma \|\Delta u_{\notparallel}\|_{L^2}^2 \|u_{\notparallel}\|_{L^2}^2 \|\langle u \rangle\|_{L_y^2}^2 \nonumber \\
&& \quad \quad \quad \quad \le  B\gamma \left(a^2\|\Delta u_{\notparallel}\|_{L^2}^2 \|u_{\notparallel}\|_{L^2}^4+ \|\Delta u_{\notparallel}\|_{L^2} \|u_{\notparallel}\|_{L^2}^3 \right). 
\end{eqnarray}
Let
$$
\rho(t):=\exp \left(-B\gamma \int_0^t \|\Delta u_{\notparallel}(\tau)\|_{L^2}^2 \|u_{\notparallel}(\tau)\|_{L^2}^2 d\tau \right)
$$
be the integrating factor, and solve the ODE \eqref{20210829eq61}, we have
\begin{eqnarray} \label{20210829eq62}
&& \left\|\langle u \rangle(t) \right\|_{L_y^2}^2 \le \frac{\|\langle u \rangle(0) \|_{L_y^2}^2}{\rho(t)} +\frac{Ba^2\gamma}{\rho(t)} \int_0^t \rho(\tau) \|\Delta u_{\notparallel}(t)\|_{L^2}^2 \|u_{\notparallel}(\tau)\|_{L^2}^4 d\tau \nonumber \\
&& \quad \quad \quad \quad \quad \quad \quad +\frac{B\gamma}{\rho(t)} \int_0^t \rho(\tau)  \|\Delta u_{\notparallel}(\tau)\|_{L^2} \|u_{\notparallel}(\tau)\|_{L^2}^3 d\tau \nonumber\\
&& \quad \quad \le \frac{\|\langle u \rangle(0) \|_{L_y^2}^2}{\rho(t)} +\frac{Ba^2\gamma}{\rho(t)} \int_0^t  \|\Delta u_{\notparallel}(t)\|_{L^2}^2 \|u_{\notparallel}(\tau)\|_{L^2}^4 d\tau \nonumber \\
&& \quad \quad \quad \quad \quad \quad \quad +\frac{B\gamma}{\rho(t)} \int_0^t  \|\Delta u_{\notparallel}(\tau)\|_{L^2} \|u_{\notparallel}(\tau)\|_{L^2}^3 d\tau. 
\end{eqnarray}
By the bootstrap assumptions, we first estimate the term $\frac{1}{\rho(t)}$ as follows:
\begin{eqnarray} \label{20210829eq63}
\frac{1}{\rho(t)}%
&=& \exp \left(B\gamma \int_0^t \|\Delta u_{\notparallel}(\tau)\|_{L^2}^2 \|u_{\notparallel}(\tau)\|_{L^2}^2 d\tau \right) \nonumber \\
&\le& \exp \left( B \cdot \varepsilon \gamma \|u_{\notparallel}(0)\|_{L^2}^2 \int_0^t \|\Delta u_{\notparallel}(\tau)\|_{L^2}^2 d\tau \right) \nonumber \\
&\le& \exp \left( B \|u_{\notparallel}(0)\|_{L^2}^4 \right). 
\end{eqnarray}
Next, we estimate the two integrals in \eqref{20210829eq62} as follows:
\begin{eqnarray} \label{20210829eq64}
 \gamma \int_0^t \|\Delta u_{\notparallel}(\tau)\|^2_{L^2} \|u_{\notparallel}(\tau)\|_{L^2}^4 d\tau%
 &\le&  B \|u_{\notparallel}(0)\|_{L^2}^4 \cdot \varepsilon \gamma \int_0^t \|\Delta u_{\notparallel}(\tau) \|_{L^2}^2 d\tau \nonumber \\
 &\le & B \|u_{\notparallel}(0)\|_{L^2}^6
\end{eqnarray}
and
\begin{eqnarray} \label{20210829eq65}
&& \gamma \int_0^t \|\Delta u_{\notparallel}(\tau) \|_{L^2} \|u_{\notparallel}(\tau)\|_{L^2}^3 d\tau \le B\|u_{\notparallel}(0)\|_{L^2}^3 \cdot \gamma \int_0^t e^{-\frac{3 \lambda_\gamma \tau}{4}} \|\Delta u_{\notparallel}(\tau) \|_{L^2} d\tau \nonumber \\
&&  \quad \quad =B \|u_{\notparallel}(0)\|^3_{L^2} \cdot  \gamma^{\frac{1}{2}} \cdot \left(\int_0^t e^{-\frac{3\lambda_\gamma \tau}{2}} d\tau \right)^{\frac{1}{2}} \cdot \left(\varepsilon \gamma \int_0^t \left\|\Delta u_{\notparallel}(\tau) \right\|_{L^2}^2 d\tau \right)^{\frac{1}{2}} \nonumber \\ 
&& \quad  \quad  \le  B  \cdot \left(\frac{\gamma}{\lambda_{\gamma}}  \right)^{\frac{1}{2}} \|u_{\notparallel}(0)\|_{L^2}^4.
\end{eqnarray}

\medskip

Therefore, by \eqref{20210829eq62}, \eqref{20210829eq63}. \eqref{20210829eq64} and \eqref{20210829eq65}, we have
\begin{eqnarray*} \label{20210829eq66}
\left\|\langle u \rangle \left(\widetilde{t_0} \right)\right\|_{L_y^2}^2 %
&\le&  B \exp \left( B\|u_{\notparallel}(0)\|_{L^2}^4 \right) \bigg( a^2\left\|u_{\notparallel}(0) \right\|_{L^2}^6 \nonumber \\
&&  \quad \quad \quad \quad + \left(\frac{\gamma}{\lambda_\gamma} \right)^{\frac{1}{2}} \left\|u_{\notparallel}(0) \right\|_{L^2}^4+\left\|\langle u \rangle(0) \right\|_{L_y^2}^2 \bigg).
\end{eqnarray*} 
The proof is complete. 
\end{proof}

\begin{prop} \label{avgprop}
Assume the bootstrap assumptions and $|a|$ is sufficiently small with satisfying
\begin{equation} \label{20211017eq04}
a^2 {\bf B_2} \exp \left({\bf B_2} \left\|u_{\notparallel}(0) \right\|_{L^2}^4 \right) \left\|u_{\notparallel}(0) \right\|_{L^2}^6 \le  \min \left\{ \frac{1}{12}, \ \frac{1}{12} \left(\frac{\varepsilon \lambda_1^2}{4{\bf B_1}} \right)^{\frac{3}{4}} \right\},
\end{equation} 
where ${\bf B_1}, {\bf B_2}>1$ are the constants defined in Lemma \ref{priopest}. Then there exists a $\gamma_0>0$, which only depends on $\varepsilon, b, \left\|u_{\notparallel}(0) \right\|_{L^2}$ and any dimensional constants , such that for any $0<\gamma<\gamma_0$ and for any initial data $\langle u \rangle (0, \cdot)$ of \eqref{20210822eq03} with satisfying 
\begin{equation} \label{20211017eq01} 
    \left\| \langle u \rangle (0, \cdot) \right\|_{L_y^2} \le \frac{1}{{\bf B_2} \exp( {\bf B_2} )}\min \left\{\frac{1}{12}, \ \frac{1}{12} \left(\frac{\varepsilon \lambda_1^2}{4{\bf B_1}} \right)^{\frac{3}{8}} \right\},
\end{equation}
the following estimates hold:  for any $0 \le t \le t_0$, 
\begin{equation} \label{20211017eq02}
    \left\| \langle u \rangle (t, \cdot) \right\|_{L_y^2}^2 + \varepsilon \gamma \int_0^t \left\|\partial_y^2 \langle u \rangle (\tau, \cdot) \right\|_{L_y^2}^2 d\tau \le {\bf B_3},
    \end{equation}
where ${\bf B_3}>0$ is a constant only depending on $\varepsilon, b, \left\| u_{\notparallel}(0) \right\|_{L^2}$ and any dimensional constants.  
\end{prop}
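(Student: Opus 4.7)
The plan is to prove the proposition by a continuity (bootstrap) argument built on top of the a priori estimate in Lemma \ref{priopest}. Set $K := \min\{1, (\varepsilon\lambda_1^2/(4{\bf B_1}))^{3/8}\}$, the threshold from \eqref{20211016eq02}. Condition \eqref{20211017eq01} gives $\|\langle u\rangle(0,\cdot)\|_{L^2_y} < K$, and since $\langle u\rangle \in C([0,T);H^2(\T))$ (as recalled right before \eqref{20210822eq03}), the map $t \mapsto \|\langle u\rangle(t,\cdot)\|_{L^2_y}$ is continuous. Define
$$T_* := \sup\{t \in [0, t_0] : \|\langle u\rangle(s,\cdot)\|_{L^2_y} \le K \text{ for all } s \in [0,t]\};$$
this supremum is attained by continuity and is strictly positive. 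The goal becomes to show $T_* = t_0$.

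Before launching the continuity argument, I would fix $\gamma_0 > 0$ small enough that
$$(\gamma/\lambda_\gamma)^{1/2}\, {\bf B_2}\exp\!\bigl({\bf B_2}\|u_{\notparallel}(0)\|_{L^2}^4\bigr)\,\|u_{\notparallel}(0)\|_{L^2}^4 \le K^2/12$$
for every $\gamma \in (0, \gamma_0)$. This is possible because Proposition \ref{20210823prop01} gives $\lambda_\gamma = C_{2,\varepsilon}\gamma^{2/(2+m)}$, so $\gamma/\lambda_\gamma = C_{2,\varepsilon}^{-1}\gamma^{m/(m+2)} \to 0$ as $\gamma \to 0$. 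Crucially, the resulting $\gamma_0$ depends only on $\varepsilon, b, \|u_{\notparallel}(0)\|_{L^2}$ and dimensional constants, as required by the statement. Meanwhile, the smallness assumption \eqref{20211017eq04} on $a$ is calibrated so that ${\bf B_2}\exp({\bf B_2}\|u_{\notparallel}(0)\|_{L^2}^4)\,a^2\|u_{\notparallel}(0)\|_{L^2}^6 \le K^2/12$, and \eqref{20211017eq01} is likewise calibrated so that ${\bf B_2}\exp({\bf B_2}\|u_{\notparallel}(0)\|_{L^2}^4)\,\|\langle u\rangle(0)\|_{L^2_y}^2 \le K^2/12$. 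Now suppose for contradiction that $T_* < t_0$. Applying Lemma \ref{priopest} with $\widetilde{t_0} = T_*$ bounds $\|\langle u\rangle(T_*)\|_{L^2_y}^2$ by the sum of three terms, each $\le K^2/12$, hence $\le K^2/4 < K^2$. Continuity of $\|\langle u\rangle(\cdot,\cdot)\|_{L^2_y}$ then lets the bound $\le K$ persist on a strict right neighborhood of $T_*$, contradicting maximality. Thus $T_* = t_0$, which yields the pointwise part of \eqref{20211017eq02}.

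For the integrated dissipation bound in \eqref{20211017eq02}, I would return to the absorbed differential inequality \eqref{20210829eq609} derived inside the proof of Lemma \ref{priopest} and integrate it over $[0,t]$ for any $t \in [0, t_0]$. Each resulting space-time integral on the right-hand side is controlled using the bootstrap assumptions on $u_{\notparallel}$ (exponential $L^2$ decay together with $\varepsilon\gamma \int_0^t \|\Delta u_{\notparallel}(\tau)\|_{L^2}^2\, d\tau \le 10\|u_{\notparallel}(0)\|_{L^2}^2$) combined with the now-uniform pointwise bound $\|\langle u\rangle(t,\cdot)\|_{L^2_y} \le K$ established in the previous step; these are precisely the estimates carried out in \eqref{20210829eq64}--\eqref{20210829eq65}. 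The anticipated main obstacle is the clean separation of constants: one must verify that $\gamma_0$ can be chosen independently of $a$ and of $\langle u\rangle(0)$. This is exactly what the structure of Lemma \ref{priopest} permits, since the only $\gamma$-dependence on its right-hand side enters through the factor $(\gamma/\lambda_\gamma)^{1/2}$, and the other two terms are controlled by the $a$-smallness \eqref{20211017eq04} and the initial-data smallness \eqref{20211017eq01} respectively.
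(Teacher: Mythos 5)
Your proof is correct and follows essentially the same route as the paper's: a continuity argument built on Lemma \ref{priopest}, with $\gamma_0$ chosen first from $\gamma/\lambda_\gamma \to 0$, the three terms on the right-hand side of Lemma \ref{priopest} each absorbed into $K^2/12$, and the dissipation bound obtained by integrating \eqref{20210829eq609} against the bootstrap assumptions and the pointwise bound just established. The only point worth noting is that your third-term calibration (as in the paper's) tacitly uses $\|u_{\notparallel}(0)\|_{L^2}\le 1$ to compare $\exp({\bf B_2}\|u_{\notparallel}(0)\|_{L^2}^4)$ with $\exp({\bf B_2})$; the paper invokes this explicitly, and it would be cleaner to state it.
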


\begin{proof}
(1). Since $\frac{\gamma}{\lambda_\gamma} \to 0$ as $\gamma \to 0$, this allows us to choose a $\gamma_0$ sufficiently small, such that for any $0<\gamma<\gamma_0$, it holds that 
\begin{equation} \label{20211017eq11}
{\bf B_2} \exp \left( {\bf B_2} \left\|u_{\notparallel}(0) \right\|_{L^2}^4 \right) \|u_{\notparallel}(0)\|_{L^2}^4  \left(\frac{\gamma}{\lambda_\gamma} \right)^{\frac{1}{2}}<  \min \left\{ \frac{1}{12}, \ \frac{1}{12} \left(\frac{\varepsilon \lambda_1^2}{4{\bf B_1}} \right)^{\frac{3}{4}} \right\}.
\end{equation} 
Note that here $\gamma_0$ only depends on $\varepsilon, b, \left\|u_{\notparallel}(0) \right\|_{L^2}$ and any dimensional constants. We now let
$$
K:= \min \left\{\frac{1}{2}, \ \frac{1}{2} \left(\frac{\varepsilon \lambda_1^2}{4{\bf B_1}} \right)^{\frac{3}{8}} \right\},
$$
in Lemma \ref{priopest} and $t(K)$ be the maximal time such that $\left\| \langle u \rangle (t, \cdot) \right\|_{L_y^2} \le K$ on $[0, t(K)]$. Without loss of generality, we might assume $K=\frac{1}{2} \left(\frac{\varepsilon \lambda_1^2}{4{\bf B_1}} \right)^{\frac{3}{8}}$, otherwise we can take a larger ${\bf B_1}$ in Lemma \ref{priopest} to make such an assumption hold. By the continuity of $L^2$ norm of the mild solution, $t(K)>0$. Our goal is to show that $t(K)=t_0$. Otherwise, assume $0<t(K)<t_0$. By Lemma \ref{priopest}, \eqref{20211017eq04},  \eqref{20211017eq01} and \eqref{20211017eq11}, we have for any $0<\gamma<\gamma_0$, 
\begin{eqnarray*}
&& \frac{1}{4} \left(\frac{\varepsilon \lambda_1^2}{4{\bf B_1}} \right)^{\frac{3}{4}} = K^2= \left\| \langle u \rangle (t(K)) \right\|_{L_y^2}^2 \nonumber \\
&& \quad \le a^2 {\bf B_2} \exp \left( {\bf B_2} \|u_{\notparallel}(0)\|_{L^2}^4 \right)  \left\|u_{\notparallel}(0) \right\|_{L^2}^6  \\
&& \quad \quad  \quad  + {\bf B_2} \exp \left( {\bf B_2} \|u_{\notparallel}(0)\|_{L^2}^4 \right) \left[\left(\frac{\gamma}{\lambda_\gamma} \right)^{\frac{1}{2}} \left\|u_{\notparallel}(0) \right\|_{L^2}^4+\left\|\langle u \rangle(0) \right\|_{L_y^2}^2  \right] \nonumber \\
&& \quad < \left[ \frac{1}{6}+  \frac{1}{12} \cdot \frac{{\bf B_2} \exp \left({{\bf B_2}\left\|u_{\notparallel}(0) \right\|_{L^2}^4} \right)}{{\bf B_2} \exp ({\bf B_2})}  \right] \left(\frac{\varepsilon \lambda_1^2}{4{\bf B_1}} \right)^{\frac{3}{4}} \\
&& \quad \le \frac{1}{4} \left(\frac{\varepsilon \lambda_1^2}{4{\bf B_1}} \right)^{\frac{3}{4}}, 
\end{eqnarray*}
where in the last estimate, we have used the fact that $\left\| u_{\notparallel}(0) \right\|_{L^2} \le 1$, which is guaranteed by the assumption \eqref{20211017eq04}. This clearly gives a contradiction and therefore the estimate 
\begin{equation} \label{20211017eq020}
\left\| \langle u \rangle (t, \cdot) \right\|_{L_y^2} \le \frac{1}{4} \left(\frac{\varepsilon \lambda_1^2}{4{\bf B_1}} \right)^{\frac{3}{4}}, 
\end{equation} 
has to be true until the maximal time, namely, $t(K)=t_0$. This gives the first part of the estimate \eqref{20211017eq02}. 

\medskip

(2). Now we turn to prove the second part of the estimate \eqref{20211017eq02}.  Taking the time integral on both sides of \eqref{20210829eq609} and using \eqref{20211017eq11},  \eqref{20211017eq020} and the bootstrap assumptions, we find that for any $0 \le t \le t_0$ and $0<\gamma<\gamma_0$,
\begin{eqnarray*}
\varepsilon \gamma \int_0^t \left\|\partial_y^2 \langle u \rangle(\tau) \right\|_{L_y^2}^2 d\tau%
&\le& B\gamma \int_0^t \left\|\Delta u_{\notparallel}(\tau) \right\|_{L^2}^2 \|u_{\notparallel}(\tau)\|_{L^2}^4  d\tau \\
&& +B\gamma \int_0^t \|\Delta u_{\notparallel}(\tau)\|_{L^2}^2  \|u_{\notparallel}(\tau)\|_{L^2}^2 \|\langle u \rangle(\tau)\|_{L_y^2}^2 d\tau \\
&& + B\gamma \int_0^t \|\Delta u_{\notparallel}(\tau)\|_{L^2} \|u_{\notparallel}(\tau)\|_{L^2}^3 d\tau \le {\bf B_3}, 
\end{eqnarray*}
where ${\bf B_3}$ is a constant which only depends on $\varepsilon, b$, $\left\|u_{\notparallel}(0)\right\|_{L^2}$ and any dimensional constants . The proof is complete. 
\end{proof}

\bigskip

\section{Bootstrap estimates and Proof of Theorem \ref{mainthm}} \label{bootest} 

In this section, we show that the bootstrap assumptions can be improved for $\gamma$ being sufficiently small, that is, for example, for $0 \le s \le t \le t_0$, 
\begin{enumerate}
    \item [(1).] $\|u_{\notparallel}(t)\|_{L^2} \le 15 e^{-\frac{\lambda_{\gamma}(t-s)}{4}} \|u_{\notparallel}(s)\|_{L^2}$;
    \item [(2).] $\varepsilon \gamma \int_s^t \|\Delta u_{\notparallel}(\tau)\|_{L^2}^2 d\tau  \le 5 \|u_{\notparallel}(s)\|_{L^2}^2$. 
\end{enumerate}
Note that this then gives a contradiction if $t_0<\infty$, and this allows us to conclude the global existence of the solution to the problem \eqref{maineq02}. Now we turn to some details. 

The following result shows that the second estimate in the bootstrap assumptions can be improved. 

\begin{prop} \label{bootestprop1}
Assume the bootstrap assumptions and \eqref{20211017eq04}. Moreover, we assume that  
\begin{equation} \label{20211018eq01}
|a| \le \frac{\varepsilon}{10^7{\bf L} \left\|u_{\notparallel}(0) \right\|^2_{L^2}}, 
\end{equation}
where ${\bf L}>0$ is some dimensional constant sufficiently large, then there exists a $0<\gamma_1 \le \gamma_0$ sufficiently small, which only depends on $\varepsilon, a, b,  \|\langle u \rangle(0)\|_{L^2}, \left\| u_{\notparallel}(0) \right\|_{L^2}$ and any dimensional constants, such that for any $0<\gamma<\gamma_1$ and any $0 \le s \le t \le t_0$,
\begin{equation} \label{20210904eq02}
\varepsilon \gamma \int_s^t \left\|\Delta u_{\notparallel}(\tau) \right\|_{L^2}^2 d\tau \le 5 \left\|u_{\notparallel}(s) \right\|_{L^2}^2. 
\end{equation} 
Here, $\gamma_0$ is defined in Proposition \ref{avgprop}. 
\end{prop}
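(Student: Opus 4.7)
The plan is to revisit the $L^2$ energy identity for $u_{\notparallel}$ used in Lemma \ref{20210827lem02}, but now close the estimate globally on $[0, t_0]$ rather than on a small interval, by exploiting the uniform bounds on $\langle u \rangle$ supplied by Proposition \ref{avgprop}. Concretely, I would begin by multiplying the equation \eqref{20210822eq04} for $u_{\notparallel}$ by $u_{\notparallel}$, integrating over $\T^2$, and integrating by parts to obtain
\begin{equation*}
\frac{1}{2} \frac{d}{dt} \|u_{\notparallel}\|_{L^2}^2 + \varepsilon \gamma \|\Delta u_{\notparallel}\|_{L^2}^2 = a\gamma \int_{\T^2} u_{\notparallel}\, \Delta(u^3)\, dxdy + b\gamma \int_{\T^2} u_{\notparallel}\, \Delta(u^2)\, dxdy,
\end{equation*}
where the two subtracted mean-in-$x$ terms vanish after testing against $u_{\notparallel}$.

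Next I would substitute $u = \langle u \rangle + u_{\notparallel}$ and expand $u^3$ and $u^2$ into monomials in $\langle u \rangle$ and $u_{\notparallel}$. For the purely $u_{\notparallel}$ contributions, I would move a derivative onto $u_{\notparallel}$ (writing, e.g., $\int u_{\notparallel}\Delta(u_{\notparallel}^3) = -\int \nabla u_{\notparallel}\cdot \nabla(u_{\notparallel}^3)$) and apply 2D Gagliardo--Nirenberg together with Young's inequality to bound such terms by $\frac{\varepsilon\gamma}{100}\|\Delta u_{\notparallel}\|_{L^2}^2$ plus quantities of the form $C\gamma\|\Delta u_{\notparallel}\|_{L^2}^2 \|u_{\notparallel}\|_{L^2}^4$, etc. For the mixed terms involving $\langle u \rangle$, I would separate out $\langle u \rangle$ via Hölder, using the 1D embeddings $\|\langle u\rangle\|_{L_y^\infty} \lesssim \|\partial_y\langle u\rangle\|_{L_y^2}^{1/2}\|\langle u\rangle\|_{L_y^2}^{1/2} + \|\langle u\rangle\|_{L_y^2}$ and interpolation on $\partial_y\langle u\rangle$, so that each such integral is controlled by $\frac{\varepsilon\gamma}{100}\|\Delta u_{\notparallel}\|_{L^2}^2$ plus a product of $\|\langle u\rangle\|_{L_y^2}$ or $\|\partial_y^2\langle u\rangle\|_{L_y^2}$ and powers of $\|u_{\notparallel}\|_{L^2}$ with $\|\Delta u_{\notparallel}\|_{L^2}$.

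Integrating in $\tau$ from $s$ to $t$ and absorbing all the $\frac{\varepsilon\gamma}{100}\|\Delta u_{\notparallel}\|_{L^2}^2$ pieces into the left-hand side, one is reduced to bounding time integrals of the types $\gamma\int\|\Delta u_{\notparallel}\|_{L^2}^2\|u_{\notparallel}\|_{L^2}^{k}\,d\tau$ and $\gamma\int\|\Delta u_{\notparallel}\|_{L^2}\|u_{\notparallel}\|_{L^2}^{k}\|\partial_y^2\langle u\rangle\|_{L_y^2}\,d\tau$. These are handled by (i) the bootstrap assumption $\|u_{\notparallel}(\tau)\|_{L^2} \leq 20 e^{-\lambda_\gamma(\tau-s)/4}\|u_{\notparallel}(s)\|_{L^2}$; (ii) the bootstrap bound $\varepsilon\gamma\int_s^t\|\Delta u_{\notparallel}\|_{L^2}^2\,d\tau \leq 10\|u_{\notparallel}(s)\|_{L^2}^2$; (iii) the uniform bounds $\|\langle u\rangle\|_{L_y^2} \leq \sqrt{{\bf B_3}}$ and $\varepsilon\gamma\int_0^t\|\partial_y^2\langle u\rangle\|_{L_y^2}^2\,d\tau \leq {\bf B_3}$ from Proposition \ref{avgprop}; and (iv) Cauchy--Schwarz in $\tau$ (mirroring the manipulation in \eqref{20210829eq65}), which repeatedly produces a factor of $(\gamma/\lambda_\gamma)^{1/2} \to 0$ as $\gamma\to 0$. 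The $a$-weighted cubic contributions are absorbed using the smallness assumption \eqref{20211018eq01}, while all remaining terms are made small by taking $\gamma < \gamma_1 \leq \gamma_0$ with $\gamma_1$ depending on $\varepsilon, a, b, \|\langle u\rangle(0)\|_{L^2}, \|u_{\notparallel}(0)\|_{L^2}$.

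The main obstacle is bookkeeping the mixed cubic term $a\gamma\int \langle u\rangle\, u_{\notparallel}^2\,\Delta u_{\notparallel}$ (and its relatives), because it involves three distinct factors at different scales; one must apply Gagliardo--Nirenberg to $u_{\notparallel}$ carefully so that the remaining power of $\|u_{\notparallel}\|_{L^2}$ matches the exponential decay from the first bootstrap assumption when integrated in time, and so that the $\|\langle u\rangle\|_{L_y^2}$ or $\|\partial_y^2\langle u\rangle\|_{L_y^2}$ factor gets paired (via Cauchy--Schwarz in $\tau$) with the $\varepsilon\gamma\int\|\Delta u_{\notparallel}\|_{L^2}^2\,d\tau$ budget without consuming all of it. Once these estimates are tuned, the combination of the $|a|$ smallness \eqref{20211018eq01} and the $(\gamma/\lambda_\gamma)^{1/2}$ factors improves the constant from $10$ to $5$, completing the bootstrap bound \eqref{20210904eq02}.
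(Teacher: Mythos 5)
Your proposal matches the paper's proof in both structure and key ideas: you test the $u_{\notparallel}$-equation against $u_{\notparallel}$, decompose the nonlinearity via $u = \langle u\rangle + u_{\notparallel}$ into pure and mixed monomials (the paper's $K_1,\dots,K_4$), absorb $O(\varepsilon\gamma)\|\Delta u_{\notparallel}\|_{L^2}^2$ by Young's inequality, and bound the resulting time integrals using the bootstrap decay, the bootstrap integral bound, Proposition~\ref{avgprop}, and Cauchy--Schwarz in $\tau$ to extract $(\gamma/\lambda_\gamma)^{\alpha}$ factors, with the $|a|$-smallness \eqref{20211018eq01} absorbing the one term (coming from $au_{\notparallel}^3$) that carries no such factor. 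This is essentially identical to the paper's argument.
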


\begin{proof}
Multiplying $u_{\notparallel}$ on both sides of \eqref{20210822eq04}, we have 
\begin{eqnarray} \label{20210902eq02}
 \frac{1}{2} \frac{d}{dt} \|u_{\notparallel}\|_{L^2}^2+\varepsilon \gamma \left\|\Delta u_{\notparallel}\right\|_{L^2}^2%
 &=& a\gamma \int_{\T^2} u_{\notparallel} \Delta \left[u_{\notparallel}^3+3 \langle u \rangle^2 u_{\notparallel}+3 \langle u \rangle u_{\notparallel}^2 \right] dxdy \nonumber \\
&&  +b\gamma \int_{\T^2} u_{\notparallel} \Delta \left[ u_{\notparallel}^2+2 \langle u \rangle u_{\notparallel} \right]dxdy \nonumber \\
&=& K_1+K_2+K_3+K_4, 
\end{eqnarray}
where 
$$
K_1:=\gamma \int_{\T^2} \Delta u_{\notparallel} \left(au_{\notparallel}^3+bu_{\notparallel}^2 \right) dxdy,  \quad K_2:=3a\gamma \int_{\T^2} \Delta u_{\notparallel} \langle u \rangle^2 u_{\notparallel} dxdy
$$
$$
K_3:=3a\gamma \int_{\T^2} \Delta u_{\notparallel} \langle u \rangle u_{\notparallel}^2 dxdy \quad \textrm{and} \quad K_4:=2b \gamma \int_{\T^2} \Delta u_{\notparallel} \langle u \rangle u_{\notparallel} dxdy. 
$$

\medskip

\textit{Estimate of $K_1$.} By the Young's inequality and the Gagliardo-Nirenberg's inequalities in 2D:
\begin{equation} \label{20210902eq11}
\left\|u_{\notparallel} \right\|_{L^4} \le B \|\Delta u_{\notparallel} \|_{L^2}^{\frac{1}{4}} \|u_{\notparallel}\|_{L^2}^{\frac{3}{4}}, 
\end{equation} 
and
$$
\left\| \nabla u_{\notparallel} \right\|_{L^4} \le B \left\| \Delta u_{\notparallel} \right\|_{L^2}^{\frac{3}{4}} \left\|u_{\notparallel} \right\|_{L^2}^{\frac{1}{4}},
$$
we have
\begin{eqnarray*}
K_1%
&\le& 3|a| \gamma \int_{\T^2} \left\|u_{\notparallel} \right|^2 \left| \nabla u_{\notparallel} \right|^2 dxdy+2|b| \gamma \int_{\T} \left| \nabla u_{\notparallel} \right|^2 \left\|u_{\notparallel} \right| dxdy \\\
&\le& 3|a|\gamma \left\|u_{\notparallel} \right\|_{L^4}^2 \left\|\nabla u_{\notparallel} \right\|_{L^4}^2+2|b| \gamma \left\|\nabla u_{\notparallel} \right\|_{L^4}^2 \left\|u_{\notparallel} \right\|_{L^2} \\
&\le& {\bf L} |a| \gamma \left\|\Delta u_{\notparallel} \right\|_{L^2}^2 \left\| u_{\notparallel} \right\|_{L^2}^2+B\gamma \left\|\Delta u_{\notparallel} \right\|_{L^2}^{\frac{3}{2}} \left\|u_{\notparallel} \right\|_{L^2}^{\frac{3}{2}},
\end{eqnarray*}
where ${\bf L}>0$ is some dimensional constant.

\medskip

\textit{Estimate of $K_2$.} By Young's inequality and the Gagliardo–Nirenberg's inequality in 1D:
\begin{equation} \label{20210902eq10}
\|\langle u \rangle\|_{L_y^\infty} \le B\left\|\partial_y^2 \langle u \rangle \right\|_{L_y^2}^\frac{1}{4} \left\|\langle u \rangle \right\|_{L_y^2}^{\frac{3}{4}}, 
\end{equation} 
 we have
\begin{eqnarray*}
K_2%
&\le& 3a\gamma \left\|\Delta u_{\notparallel} \right\|_{L^2}  \left( \int_{\T^2} \langle u \rangle^4 u_{\notparallel}^2 dxdy \right)^{\frac{1}{2}}\le  \frac{\varepsilon \gamma}{10} \left\|\Delta u_{\notparallel}\right\|_{L^2}^2+Ba^2\gamma \int_{\T^2} \langle u \rangle^4 u_{\notparallel}^2 dxdy \\
&\le& \frac{\varepsilon  \gamma}{10} \|\Delta u_{\notparallel}\|_{L^2}^2+Ba^2\gamma \left\|\langle u \rangle \right\|_{L_y^\infty}^4 \left\|u_{\notparallel} \right\|_{L^2}^2 \\
&\le&  \frac{\varepsilon  \gamma}{10} \left\| \Delta u_{\notparallel} \right\|_{L^2}^2+B\gamma \left\|\partial_y^2 \langle u \rangle \right\|_{L_y^2} \left\|\langle u \rangle \right\|_{L_y^2}^3 \left\|u_{\notparallel}\right\|_{L^2}^2,
\end{eqnarray*}
where in the last estimate, we have used the assumption that $|a|<1$.

\medskip

\textit{Estimate of $K_3$.} By Young's inequality, \eqref{20210902eq11} and \eqref{20210902eq10}, we have
\begin{eqnarray*}
K_3%
&\le& 3a\gamma \left\|\Delta u_{\notparallel} \right\|_{L^2} \left( \int_{\T^2} \langle u \rangle^2 u_{\notparallel}^4 dxdy \right)^{\frac{1}{2}} \le \frac{\varepsilon \gamma}{10} \left\|\Delta u_{\notparallel} \right\|_{L^2}^2+B\gamma \int_{\T^2} \langle u \rangle^2 u_{\notparallel}^4 dxdy \\
&\le& \frac{\varepsilon  \gamma}{10} \left\|\Delta u_{\notparallel} \right\|_{L^2}^2+B \gamma \left\|\langle u \rangle \right\|_{L_y^\infty}^2 \int_{\T^2} u_{\notparallel}^4 dxdy \\
&\le& \frac{\varepsilon  \gamma}{10} \left\|\Delta u_{\notparallel} \right\|_{L^2}^2+B\gamma \left\|\partial_y^2 \langle u \rangle \right\|_{L_y^2}^{\frac{1}{2}} \left\| \langle u \rangle \right\|_{L_y^2} \left\|\Delta u_{\notparallel} \right\|_{L^2} \left\|u_{\notparallel} \right\|_{L^2}^3.
\end{eqnarray*}

\medskip

\textit{Estimate of $K_4$.} Again, by Young's inequality and the Gagliardo–Nirenberg's inequalities \eqref{20210902eq10} and \eqref{20210902eq11}, we see that
\begin{eqnarray*}
K_4%
&\le& 2b\gamma \left\|\Delta u_{\notparallel} \right\|_{L^2} \left( \int_{\T^2} \langle u \rangle^2 u_{\notparallel}^2 dxdy \right)^{\frac{1}{2}} \le \frac{\varepsilon  \gamma}{10} \left\|\Delta u_{\notparallel} \right\|_{L^2}^2+B\gamma \int_{\T^2} \langle u \rangle^2 u_{\notparallel}^2 dxdy \\
&\le& \frac{\varepsilon  \gamma}{10} \left\|\Delta u_{\notparallel} \right\|_{L^2}^2+B\gamma \left\| \langle u \rangle \right\|_{L_y^\infty}^2 \left\|u_{\notparallel} \right\|_{L^2}^2 \\
&\le& \frac{\varepsilon  \gamma}{10} \left\|\Delta u \right\|_{L^2}^2+B\gamma \left\|\partial_y^2 \langle u \rangle \right\|_{L_y^2}^{\frac{1}{2}} \left\|\langle y \rangle \right\|_{L_y^2}^{\frac{3}{2}} \left\|u_{\notparallel} \right\|_{L^2}^2. 
\end{eqnarray*}

\medskip

Combining \eqref{20210902eq02} with estimates of the terms $K_1, K_2, K_3$ and $K_4$, we derive that
\begin{eqnarray} \label{20210902eq04} 
&& \frac{1}{2} \frac{d}{dt} \left\|u_{\notparallel} \right\|_{L^2}^2+\frac{\varepsilon  \gamma}{4} \left\|\Delta u_{\notparallel} \right\|_{L^2}^2 \le {\bf L} |a| \gamma \left\|\Delta u_{\notparallel} \right\|_{L^2}^2 \left\|u_{\notparallel} \right\|_{L^2}^2 \nonumber \\
&& \quad \quad \quad B\gamma \bigg[ \left\|\Delta u_{\notparallel} \right\|_{L^2}^{\frac{3}{2}} \left\|u_{\notparallel} \right\|_{L^2}^{\frac{3}{2}}+ \left\|\partial_y^2 \langle u \rangle \right\|_{L_y^2} \left\|\langle u \rangle \right\|_{L_y^2}^3 \left\|u_{\notparallel}\right\|_{L^2}^2 \nonumber  \\
&& \quad \quad \quad    +  \left\|\partial_y^2 \langle u \rangle \right\|_{L_y^2}^{\frac{1}{2}} \left\| \langle u \rangle \right\|_{L_y^2} \left\|\Delta u_{\notparallel} \right\|_{L^2} \left\|u_{\notparallel} \right\|_{L^2}^3+  \left\|\partial_y^2 \langle u \rangle \right\|_{L_y^2}^{\frac{1}{2}} \left\|\langle y \rangle \right\|_{L_y^2}^{\frac{3}{2}} \left\|u_{\notparallel} \right\|_{L^2}^2 \bigg],  
\end{eqnarray}
which implies 
\begin{equation} \label{20210902eq03}
\varepsilon \gamma \int_s^t \left\|\Delta u_{\notparallel} (\tau) \right\|_{L^2}^2 d\tau \le 2\left\|u_{\notparallel}(s) \right\|_{L^2}^2+\widetilde{K_0}+\widetilde{K_1}+\widetilde{K_2}+\widetilde{K_3}+\widetilde{K_4},
\end{equation}
where 
$$
\widetilde{K_0}:=4{\bf L} |a| \gamma \int_s^t \left\|\Delta u_{\notparallel}(\tau) \right\|_{L^2}^2 \left\|u_{\notparallel}(\tau) \right\|_{L^2}^2 d\tau,
$$
$$
\widetilde{K_1}:=L\gamma \int_s^t \left\|\Delta u_{\notparallel}(\tau) \right\|_{L^2}^{\frac{3}{2}} \left\|u_{\notparallel}(\tau) \right\|_{L^2}^{\frac{3}{2}} d\tau, 
$$
$$
\widetilde{K_2}:=B\gamma \int_s^t \left\|\partial_y^2 \langle u \rangle (\tau) \right\|_{L_y^2} \left\|\langle u \rangle (\tau) \right\|_{L_y^2}^3 \left\|u_{\notparallel} (\tau) \right\|_{L^2}^2 d\tau, 
$$
$$
\widetilde{K_3}:=B\gamma \int_s^t \left\|\partial_y^2 \langle u \rangle \right\|_{L_y^2}^{\frac{1}{2}} \left\| \langle u \rangle \right\|_{L_y^2} \left\|\Delta u_{\notparallel} \right\|_{L^2} \left\|u_{\notparallel} \right\|_{L^2}^3 d\tau
$$
and
$$
\widetilde{K_4}:=B\gamma \int_s^t \left\|\partial_y^2 \langle u \rangle (\tau) \right\|_{L_y^2}^{\frac{1}{2}} \left\|\langle u \rangle(\tau) \right\|_{L_y^2}^{\frac{3}{2}} \left\|u_{\notparallel} (\tau) \right\|_{L^2}^2 d\tau. 
$$

\medskip

\textit{Estimate of $\widetilde{K_0}$.} By the bootstrap assumptions and \eqref{20211018eq01}, we have
\begin{eqnarray*}
\widetilde{K_0}%
&\le& \frac{1600{\bf L}|a|}{\varepsilon} \|u_{\notparallel}(0)\|_{L^2}^2 \cdot \varepsilon \gamma \int_s^t \left\|\Delta u_{\notparallel}(\tau) \right\|_{L^2}^2 d\tau \\
&\le& \frac{16000 {\bf L} |a|}{\varepsilon} \|u_{\notparallel}(0)\|_{L^2}^2 \cdot \|u_{\notparallel}(s) \|_{L^2}^2 \\
&\le& \frac{\|u_{\notparallel}(s)\|_{L^2}^2}{20}.
\end{eqnarray*}

\textit{Estimate of $\widetilde{K_1}$.} By the bootstrap assumptions, we have 
\begin{eqnarray*}
\widetilde{K_1} %
&\le& B \gamma \int_s^t e^{-\frac{3 \lambda_\gamma (t-s)}{8}} \left\|\Delta u_{\notparallel}(\tau) \right\|_{L^2}^{\frac{3}{2}} d\tau \cdot \left\|u_{\notparallel}(s)\right\|_{L^2}^{\frac{3}{2}} \\ 
&\le& B\gamma \left\|u_{\notparallel}(s) \right\|_{L^2}^{\frac{3}{2}}  \left(\int_s^t e^{-\frac{3 \lambda_\gamma(\tau-s)}{2}} d\tau \right)^{\frac{1}{4}} \left(\int_s^t \left\|\Delta u_{\notparallel}(\tau) \right\|_{L^2}^2 d\tau \right)^{\frac{3}{4}} \\
&\le& B \cdot \left(\frac{\gamma}{\lambda_\gamma} \right)^{\frac{1}{4}} \cdot \left(\varepsilon \gamma \int_s^t \left\|\Delta u_{\notparallel}(\tau) \right\|_{L^2}^2 d\tau \right)^{\frac{3}{4}} \cdot \left\|u_{\notparallel}(s) \right\|_{L^2}^{\frac{3}{2}} \\
&\le& B \left(\frac{\gamma}{\lambda_\gamma} \right)^{\frac{1}{4}} \cdot \|u_{\notparallel}(s) \|_{L^2}^3 \\
&\le& B \|u_{\notparallel}(0)\|_{L^2} \left(\frac{\gamma}{ \lambda_\gamma} \right)^{\frac{1}{4}}  \cdot \left\|u_{\notparallel}(s) \right\|_{L^2}^2, 
\end{eqnarray*} 
where in the last estimate, we have used the estimate $\|u_{\notparallel}(s)\|_{L^2} \le 20 \|u_{\notparallel}(0)\|_{L^2}$, which is a consequence of the first estimate in the bootstrap assumptions.

\textit{Estimate of $\widetilde{K_2}$.}  By the bootstrap assumptions and Proposition \ref{avgprop}, we have 
\begin{eqnarray*}
\widetilde{K_2}%
&\le& C\gamma \int_s^t \left\|\partial_y^2 \langle u \rangle(\tau) \right\|_{L_y^2} \left\|u_{\notparallel} (\tau)\right\|_{L^2}^2 d\tau \\
&\le& C \gamma \left\|u_{\notparallel}(s) \right\|_{L^2}^2 \int_s^t e^{-\frac{\lambda_\gamma(\tau-s)}{2}} \left\|\partial_y^2 \langle u \rangle(\tau) \right\|_{L_y^2} d\tau \\
&\le& C\gamma \left\|u_{\notparallel}(s) \right\|_{L^2}^2 \left(\int_s^t e^{-\lambda_\gamma(\tau-s)} d\tau \right)^{\frac{1}{2}} \left(\int_s^t \left\|\partial_y^2 \langle u \rangle (\tau) \right\|_{L_y^2}^2 d\tau \right)^{\frac{1}{2}} \\
&\le& C \left(\frac{\gamma}{\lambda_{\gamma}} \right)^{\frac{1}{2}} \cdot \left\|u_{\notparallel}(s) \right\|_{L^2}^2.
\end{eqnarray*}
Note that since we have used Proposition \ref{avgprop}, the implicit constants in the above estimates can also depend on $a$, $\left\| \langle u\rangle (0) \right\|_{L^2}$ and $\left\| u_{\notparallel}(0) \right\|_{L^2}$. 

\medskip

\textit{Estimate of $\widetilde{K_3}$.} Again, using the bootstrap assumptions and Proposition \ref{avgprop}, we see that
\begin{eqnarray*}
\widetilde{K_3}%
&\le& C\gamma \left\|u_{\notparallel}(s)\right\|_{L^2}^3 \int_s^t e^{-\frac{3\lambda_{\gamma}(\tau-s)}{4}} \left\|\partial_y^2 \langle u \rangle (\tau) \right\|_{L_y^2}^{\frac{1}{2}} \left\|\Delta u_{\notparallel}(\tau) \right\|_{L^2} d\tau \\
&\le& C\gamma  \left\|u_{\notparallel}(s)\right\|_{L^2}^3  \left(\int_s^t e^{-3\lambda_{\gamma}(\tau-s)} d\tau \right)^{\frac{1}{4}}   \left(\int_s^t \left\|\partial_y^2 \langle u \rangle \right\|_{L_y^2}^2 d\tau \right)^{\frac{1}{4}} \\
&& \quad \quad \quad \cdot \left( \int_s^t \left\|\Delta u_{\notparallel}(\tau) \right\|_{L^2}^2 d\tau \right)^{\frac{1}{2}} \\
&\le& C \left(\frac{\gamma}{ \lambda_{\gamma}} \right)^{\frac{1}{4}} \left\|u_{\notparallel}(s) \right\|_{L^2}^3  \left(\varepsilon \gamma \int_s^t \left\|\partial_y^2 \langle u \rangle \right\|_{L_y^2}^2 d\tau \right)^{\frac{1}{4}} \left( \varepsilon \gamma \int_s^t \left\|\Delta u_{\notparallel}(\tau) \right\|_{L^2}^2 d\tau \right)^{\frac{1}{2}} \\
&\le& C \left(\frac{\gamma}{\lambda_{\gamma}} \right)^{\frac{1}{4}} \left\|u_{\notparallel}(s) \right\|_{L^2}^4 \le   C  \left\|u_{\notparallel}(0)\right\|_{L^2}^2 \left(\frac{\gamma}{\lambda_{\gamma}} \right)^{\frac{1}{4}} \left\|u_{\notparallel}(s)\right\|_{L^2}^2.
\end{eqnarray*}

\medskip

\textit{Estimate of $\widetilde{K_4}$.} Similarly, we have
\begin{eqnarray*}
\widetilde{K_4}%
&\le& C\gamma \int_s^t \left\|\partial_y^2 \langle u \rangle (\tau) \right\|_{L_y^2}^{\frac{1}{2}} \left\|u_{\notparallel}(\tau) \right\|_{L^2}^2 d\tau \\
&\le& C\gamma \left\|u_{\notparallel}(s) \right\|_{L^2}^2 \int_s^t e^{-\frac{\lambda_{\gamma}(\tau-s)}{2}}\left\|\partial_y^2 \langle u \rangle (\tau) \right\|_{L_y^2}^{\frac{1}{2}} d\tau \\
&\le& C\gamma^{\frac{3}{4}} \left\|u_{\notparallel}(s) \right\|_{L^2}^2  \left(\int_s^t e^{-\frac{2\lambda_{\gamma}(\tau-s)}{3}} d\tau \right)^{\frac{3}{4}}  \left(\gamma \int_s^t \left\|\partial_y^2 \langle u \rangle (\tau) \right\|_{L_y^2}^2 d\tau \right)^{\frac{1}{4}} \\
&\le& C \left(\frac{\gamma}{ \lambda_{\gamma}} \right)^{\frac{3}{4}} \left\|u_{\notparallel}(s) \right\|_{L^2}^2.
 \end{eqnarray*}
 
 \medskip

Therefore, by \eqref{20210902eq03} and the estimates of the terms $\widetilde{K_0}$, $\widetilde{K_1}$, $\widetilde{K_2}, \widetilde{K_3}$ and $\widetilde{K_4}$, we have
\begin{eqnarray} \label{20210904eq01} 
&& \varepsilon \gamma \int_s^t \left\|\Delta u_{\notparallel}(\tau) \right\|_{L^2}^2 d\tau \le 3\|u_{\notparallel}(s)\|_{L^2}^2\nonumber \\
&& \quad +C \|u_{\notparallel}(s)\|_{L^2}^2 \left[\left(\frac{\gamma}{ \lambda_{\gamma}} \right)^{\frac{3}{4}}+\left(\frac{\gamma}{\lambda_{\gamma}} \right)^{\frac{1}{2}}+ \|u_{\notparallel}(0)\|_{L^2}^2 \left(\frac{\gamma}{\lambda_{\gamma}} \right)^{\frac{1}{4}}\right].
\end{eqnarray}
Note that $\frac{\gamma}{\lambda_\gamma} \to 0$ as $\lambda \to 0$, it suffices to pick $\gamma_1$ sufficiently small, such that 
$$
C  \left[\left(\frac{\gamma}{ \lambda_{\gamma}} \right)^{\frac{3}{4}}+\left(\frac{\gamma}{\lambda_{\gamma}} \right)^{\frac{1}{2}}+ \|u_{\notparallel}(0)\|_{L^2}^2 \left(\frac{\gamma}{ \lambda_{\gamma}} \right)^{\frac{1}{4}}\right]<2, 
$$
and this together with \eqref{20210904eq01}  clearly implies the desired estimate \eqref{20210904eq02}. 
\end{proof}

Our next goal is to improve the first estimate in the bootstrap assumption. We derive this via several steps. We start with estimating $\|u_{\notparallel}(t)\|_{L^2}$, where $t$ is sufficiently close $s$.

\begin{prop} \label{bootestprop2}
Under the assumption of Proposition \ref{bootestprop1}, there exists a $0<\gamma_2 \le \gamma_0$ which only depends on $\varepsilon, a, b, \left\|\langle u \rangle(0)\right\|_{L^2}, \left\|u_{\notparallel}(0) \right\|_{L^2}$ and any dimensional constants, such that for any $0<\gamma<\gamma_2$ and any $0\le s \le t \le t_0$, 
\begin{equation} \label{20210904eq04}
\left\|u_{\notparallel}(t) \right\|_{L^2} \le \frac{3}{2} \left\| u_{\notparallel}(s) \right\|_{L^2}.    
\end{equation}
\end{prop}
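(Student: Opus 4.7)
The plan is to revisit the differential inequality \eqref{20210902eq04} derived in the proof of Proposition \ref{bootestprop1}, but this time, instead of dropping the $\frac{d}{dt}\|u_{\notparallel}\|_{L^2}^2$ term and extracting the dissipation integral, we do the opposite: we drop the (nonnegative) dissipation contribution and retain only the energy term on the left-hand side. Integrating from $s$ to $t$ gives
\begin{equation*}
\left\|u_{\notparallel}(t)\right\|_{L^2}^2 \le \left\|u_{\notparallel}(s)\right\|_{L^2}^2 + 2 \bigl(\widetilde{K_0}+\widetilde{K_1}+\widetilde{K_2}+\widetilde{K_3}+\widetilde{K_4}\bigr),
\end{equation*}
where $\widetilde{K_0},\ldots,\widetilde{K_4}$ are exactly the quantities that were estimated in the proof of Proposition \ref{bootestprop1}.

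Next, I would re-use the estimates for $\widetilde{K_0},\ldots,\widetilde{K_4}$ derived in Proposition \ref{bootestprop1}, but with one crucial improvement: since Proposition \ref{bootestprop1} has now been established, I may replace all factors of $\varepsilon\gamma\int_s^t\|\Delta u_{\notparallel}\|_{L^2}^2\, d\tau$ occurring in those bounds by $5\|u_{\notparallel}(s)\|_{L^2}^2$ rather than the weaker bootstrap bound $10\|u_{\notparallel}(s)\|_{L^2}^2$. The result is that each $\widetilde{K_i}$ is controlled by $\|u_{\notparallel}(s)\|_{L^2}^2$ times a combination of small factors: powers of $\gamma/\lambda_\gamma$ (which tends to $0$ as $\gamma\to 0$ by Proposition \ref{20210823prop01}) and the quantity $|a|\|u_{\notparallel}(0)\|_{L^2}^2/\varepsilon$ (which is small by \eqref{20211018eq01}). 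Combining, we obtain
\begin{equation*}
\left\|u_{\notparallel}(t)\right\|_{L^2}^2 \le \left\|u_{\notparallel}(s)\right\|_{L^2}^2 \left(1 + C\Phi(\gamma,a)\right),
\end{equation*}
where $\Phi(\gamma,a) \to 0$ as $\gamma \to 0$ (the dependence on $a$ being already controlled by \eqref{20211018eq01}), and $C$ depends only on $\varepsilon, a, b, \|\langle u\rangle(0)\|_{L^2}, \|u_{\notparallel}(0)\|_{L^2}$ and dimensional constants.

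Finally, I would choose $\gamma_2 \le \gamma_1$ small enough so that $C\Phi(\gamma,a) \le \frac{5}{4}$ for all $0<\gamma<\gamma_2$, which yields $\|u_{\notparallel}(t)\|_{L^2}^2 \le \frac{9}{4}\|u_{\notparallel}(s)\|_{L^2}^2$, and taking square roots gives \eqref{20210904eq04}. The only real subtlety is bookkeeping: one must carefully check that every perturbation term in the proof of Proposition \ref{bootestprop1} carries an extra small factor (either $\gamma/\lambda_\gamma$ to some positive power, or $|a|$) in front of $\|u_{\notparallel}(s)\|_{L^2}^2$, so that there is genuinely a small-parameter regime in which the multiplicative constant is close to $1$. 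This is essentially built into the structure of the estimates of $\widetilde{K_0},\ldots,\widetilde{K_4}$, so no new analytic ideas beyond those in Proposition \ref{bootestprop1} are required; this is the key step because it is exactly the mechanism that lets us close the bootstrap on the $L^2$-norm.
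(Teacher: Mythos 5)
Your proposal is correct and follows essentially the same route as the paper: integrate \eqref{20210902eq04} from $s$ to $t$, drop the nonnegative dissipation term, observe that $\widetilde{K_0}$ is a fixed small multiple of $\|u_{\notparallel}(s)\|_{L^2}^2$ thanks to \eqref{20211018eq01} while $\widetilde{K_1},\dots,\widetilde{K_4}$ each carry a positive power of $\gamma/\lambda_\gamma$, and shrink $\gamma$ to close. Two very minor points: the ``crucial improvement'' of replacing the bootstrap bound $10\|u_{\notparallel}(s)\|_{L^2}^2$ by the sharper $5\|u_{\notparallel}(s)\|_{L^2}^2$ from Proposition \ref{bootestprop1} is not actually needed (the paper just re-uses the $\widetilde{K_i}$ estimates as derived), and your statement that $\Phi(\gamma,a)\to 0$ as $\gamma\to 0$ is slightly off since the $\widetilde{K_0}$ contribution does not vanish with $\gamma$ — but you note in passing that it is controlled by \eqref{20211018eq01}, so the argument still closes exactly as in the paper.
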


\begin{proof}
Note that by \eqref{20210902eq04}, it is immediate that
\begin{eqnarray*}
\left\|u_{\notparallel}(t) \right\|_{L^2}^2 %
&\le&   \left\|u_{\notparallel}(s) \right\|_{L^2}^2+\frac{\widetilde{K_0}+\widetilde{K_1}+\widetilde{K_2}+\widetilde{K_3}+\widetilde{K_4}}{2} \\
&\le& 1.1 \left\|u_{\notparallel}(s) \right\|_{L^2}^2+ \frac{\widetilde{K_1}+\widetilde{K_2}+\widetilde{K_3}+\widetilde{K_4}}{2}
\end{eqnarray*}
where $\widetilde{K_0}, \widetilde{K_1}, \widetilde{K_2}, \widetilde{K_3}$ and $\widetilde{K_4}$ are defined in the proof of Proposition \ref{bootestprop1} (recall that $\widetilde{K_0} \le \frac{\|u_{\notparallel}(s)\|_{L^2}^2}{10}$. The desired estimate \eqref{20210904eq04} then follows from the argument that we have used in \eqref{20210902eq03}. 
\end{proof}

Next, we estimate $\|u_{\notparallel}(t)\|_{L^2}$, where $t$ is relatively ``far away" from $s$. We start with rewriting the Duhamel's formula \eqref{20210823eq01} into a slightly different form: for any $\widetilde{\tau}>0$ with $0 \le s \le s+\widetilde{\tau} \le t_0$, we can write
\begin{eqnarray} \label{20210905eq01}
u_{\notparallel}(s+\widetilde{\tau})%
&=& \calS_{\widetilde{\tau}} \left(u_{\notparallel}(s) \right)  +a\gamma \int_s^{\widetilde{\tau}+s} \calS_{\widetilde{\tau}+s-\tau} \bigg[ \Delta \left(u_{\notparallel}^3+3\langle u \rangle^2 u_{\notparallel}+3 \langle u \rangle u_{\notparallel}^2 \right) \nonumber \\
&& \quad \quad \quad \quad \quad \quad \quad \quad \quad  \quad -\int_{\T}\Delta \left(u_{\notparallel}^3+3\langle u \rangle^2 u_{\notparallel}+3 \langle u \rangle u_{\notparallel}^2 \right) dx \bigg] d\tau \nonumber \\
&& +b\gamma \int_s^{\widetilde{\tau}+s} \calS_{\widetilde{\tau}+s-\tau} \left[ \Delta\left(u_{\notparallel}^2+2\langle u \rangle u_{\notparallel} \right)-\int_{\T}\Delta\left(u_{\notparallel}^2+2\langle u \rangle u_{\notparallel} \right)  \right] d\tau. 
\end{eqnarray}

\begin{prop} \label{bootestprop3}
Assume the bootstrap assumptions, \eqref{20211017eq04} , \eqref{20211018eq01} and 
\begin{equation} \label{20211026eq100}
|a| \le \frac{\varepsilon}{10^6 {\bf L'}{\bf B_3}^{\frac{1}{2}} \|u_{\notparallel}(0)\|_{L^2}},
\end{equation}
where ${\bf B_3}$ is the constant defined in Proposition \ref{avgprop} and ${\bf L'>0}$ is some dimensional constant. Let $\tau^*:=\frac{4}{\lambda_{\gamma}}$. If $t_0 \ge \tau^*$,  then there exists a $0<\gamma_3 \le \gamma_0$, which only depends on $\varepsilon, a, b, \left\| \langle u \rangle(0) \right\|_{L^2}$ and $\left\| u_{\notparallel}(0) \right\|_{L^2}$, such that for any $0<\gamma<\gamma_3$, one has
\begin{equation} \label{20210905eq02}
\left\|u_{\notparallel}(\tau^*+s) \right\|_{L^2} \le \frac{1}{e} \left\|u_{\notparallel}(s) \right\|_{L^2}.
\end{equation}
\end{prop}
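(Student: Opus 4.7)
The plan is to take $L^2$-norms in the Duhamel representation \eqref{20210905eq01} and bound each piece via the semigroup decay estimate of Proposition \ref{20210823prop01}, together with the Gagliardo--Nirenberg inequalities already used in Lemma \ref{20210827lem01} and Proposition \ref{bootestprop1}. Since $u_{\notparallel}(s)$ is $x$-mean-zero, Proposition \ref{20210823prop01} yields
\[
\|\calS_{\tau^*}(u_{\notparallel}(s))\|_{L^2}\le 10\,e^{-\lambda_\gamma \tau^*}\|u_{\notparallel}(s)\|_{L^2}=10\,e^{-4}\|u_{\notparallel}(s)\|_{L^2}.
\]
Numerically $10\,e^{-4}\approx 0.183<1/e\approx 0.368$, so there is a margin of roughly $0.18\|u_{\notparallel}(s)\|_{L^2}$ to absorb the nonlinear contributions.

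For each nonlinear term in \eqref{20210905eq01}, the bracketed integrand has the form $G-\int_\T G\,dx$, hence is $x$-mean-zero, so Proposition \ref{20210823prop01} again gives
\[
\|\calS_{\tau^*+s-\tau}(G-\langle G\rangle)\|_{L^2}\le 20\,e^{-\lambda_\gamma(\tau^*+s-\tau)}\|G\|_{L^2}.
\]
This reduces matters to estimating $L^2$-norms of $\Delta(u_{\notparallel}^3),\ \Delta(\langle u\rangle^2 u_{\notparallel}),\ \Delta(\langle u\rangle u_{\notparallel}^2),\ \Delta(u_{\notparallel}^2),\ \Delta(\langle u\rangle u_{\notparallel})$. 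I would apply the Gagliardo--Nirenberg estimates \eqref{20210827eq01}, \eqref{20210827eq20}, and \eqref{20210902eq10} exactly as in the proof of Proposition \ref{bootestprop1} to bound these by products of $\|u_{\notparallel}\|_{L^2}$, $\|\Delta u_{\notparallel}\|_{L^2}$, $\|\langle u\rangle\|_{L_y^2}$, and $\|\partial_y^2\langle u\rangle\|_{L_y^2}$.

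The resulting integrals over $[s,s+\tau^*]$ are then handled by Hölder's inequality, separating out the exponential weight (which integrates to a factor of $\lambda_\gamma^{-1/2}$ or $\lambda_\gamma^{-1/4}$), and invoking: the improved bootstrap $\varepsilon\gamma\int_s^{s+\tau^*}\|\Delta u_{\notparallel}\|_{L^2}^2\,d\tau\le 5\|u_{\notparallel}(s)\|_{L^2}^2$ from Proposition \ref{bootestprop1}; the uniform bound $\varepsilon\gamma\int_0^t\|\partial_y^2\langle u\rangle\|_{L_y^2}^2\,d\tau\le {\bf B_3}$ and $\|\langle u\rangle\|_{L_y^2}\le {\bf B_3}^{1/2}$ from Proposition \ref{avgprop}; and the pointwise bootstrap $\|u_{\notparallel}(\tau)\|_{L^2}\le 20\|u_{\notparallel}(0)\|_{L^2}$. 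This should produce an estimate of the shape
\[
\sum_i \|(\text{nonlinear term}_i)\|_{L^2}\le C\bigl[|a| + (\gamma/\lambda_\gamma)^{1/4}+(\gamma/\lambda_\gamma)^{1/2}\bigr]\|u_{\notparallel}(s)\|_{L^2},
\]
where $C$ depends on $\varepsilon,b,\|\langle u\rangle(0)\|_{L_y^2},\|u_{\notparallel}(0)\|_{L^2}$. Combined with the linear bound, smallness of $|a|$ via \eqref{20211026eq100} together with $\gamma_3\le\gamma_0$ chosen so that the $\gamma/\lambda_\gamma$ contributions are $\ll 1$ gives a total bound below $1/e$, proving \eqref{20210905eq02}.

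The main obstacle is the cross term $3a\gamma\Delta(\langle u\rangle^2 u_{\notparallel})$: since $\|\langle u\rangle\|_{L_y^2}$ is only controlled by a possibly large constant ${\bf B_3}^{1/2}$ (not a small parameter), smallness for this term must come purely from $|a|$ and the factor $\tau^*\sim\lambda_\gamma^{-1}$ does not help on its own. This is precisely the role of the quantitative hypothesis \eqref{20211026eq100}, which is strictly stronger than \eqref{20211018eq01} because it measures $|a|$ against $\|u_{\notparallel}(0)\|_{L^2}$ rather than its square. The analogous term $3a\gamma\Delta(\langle u\rangle u_{\notparallel}^2)$ inherits an extra factor of $\|u_{\notparallel}(s)\|_{L^2}$, which is already small by the standing assumption, and the quadratic $b$-terms acquire extra powers of $\gamma/\lambda_\gamma$ from the exponential weight (as the corresponding integrand in \eqref{20210905eq01} has no $a$-prefactor to work with but only a single $\|u_{\notparallel}\|_{L^2}$), so they are made small by choosing $\gamma_3$ sufficiently small.
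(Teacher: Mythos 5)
Your high-level route (Duhamel $\Rightarrow$ semigroup decay on the linear part $\Rightarrow$ contraction plus Gagliardo--Nirenberg and H\"older on the nonlinear Duhamel integrals, with smallness from $\gamma/\lambda_\gamma$ and from $|a|$) is essentially what the paper does. One cosmetic difference: you invoke the exponential decay of $\calS$ on the nonlinear Duhamel integrands; the paper simply uses $\|\calS_t\|_{L^2\to L^2}\le 1$ and extracts the $\lambda_\gamma^{-1}$ smallness from the \emph{length} $\tau^*=4/\lambda_\gamma$ of the time window via H\"older. Both routes produce identical powers of $\gamma/\lambda_\gamma$, so that choice is immaterial.

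However, the paragraph where you identify \emph{which} nonlinear contribution forces the hypothesis \eqref{20211026eq100} is wrong in both directions. You claim the obstacle is $3a\gamma\Delta(\langle u\rangle^2 u_{\notparallel})$ because $\|\langle u\rangle\|_{L_y^2}$ is only controlled by the possibly large constant ${\bf B_3}^{1/2}$. But that is not the issue: every piece of $\Delta(\langle u\rangle^2 u_{\notparallel})$ (namely $\langle u\rangle^2\Delta u_{\notparallel}$, $|\partial_y\langle u\rangle|^2 u_{\notparallel}$, $\langle u\rangle\partial_y^2\langle u\rangle\,u_{\notparallel}$, and $\langle u\rangle\partial_y\langle u\rangle\,\partial_y u_{\notparallel}$) carries a \emph{strictly fractional} total power of $\|\Delta u_{\notparallel}\|_{L^2}$ and $\|\partial_y^2\langle u\rangle\|_{L_y^2}$ after the Gagliardo--Nirenberg substitutions (\eqref{GN1-1}--\eqref{GN2-2}), so the H\"older exponents leave a positive power of $\tau^*$ in the time integral, yielding a positive power of $\gamma/\lambda_\gamma$; the constant ${\bf B_3}^{1/2}$ is simply absorbed into $C$, and no $|a|$-smallness is needed for that term. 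Conversely, you dismiss $3a\gamma\Delta(\langle u\rangle u_{\notparallel}^2)$ on the grounds that $\|u_{\notparallel}(s)\|_{L^2}$ is ``already small by the standing assumption,'' but there is no such assumption --- the theorem's smallness hypothesis is on $\|\langle u\rangle(0)\|_{L_y^2}$, not on $\|u_{\notparallel}(0)\|_{L^2}$. In fact it is precisely the sub-term $u_{\notparallel}^2\partial_y^2\langle u\rangle$ inside $\Delta(\langle u\rangle u_{\notparallel}^2)$ that is the critical one: it is bounded by $\|\Delta u_{\notparallel}\|_{L^2}\|u_{\notparallel}\|_{L^2}\|\partial_y^2\langle u\rangle\|_{L_y^2}$, carrying the \emph{full} power one of both dissipation norms, so Cauchy--Schwarz in time pairs them exactly with no leftover $(\tau^*)^\theta$ factor, and the integral is $O(|a|{\bf B_3}^{1/2}\|u_{\notparallel}(0)\|_{L^2}/\varepsilon)\cdot\|u_{\notparallel}(s)\|_{L^2}$. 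This is exactly what \eqref{20211026eq100} is designed to tame. The cubic term $a\gamma\Delta(u_{\notparallel}^3)$ suffers the same deficiency (full power two of $\|\Delta u_{\notparallel}\|_{L^2}$) and is controlled by \eqref{20211018eq01}. So the proof strategy is sound, but your accounting of where the $|a|$-smallness enters --- which is the substantive content of this step --- is incorrect.
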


\begin{proof} 
Taking $L^2$ norm on both sides of \eqref{20210905eq01}, we have
\begin{equation} \label{20210908eq10}
\left\|u_{\notparallel}(\tau^*+s) \right\|_{L^2} \le H_1+H_2+H_3, 
\end{equation} 
where
$$
H_1:=\left\|\calS_{\tau^*}(u_{\notparallel}(s)) \right\|_{L^2}, \quad 
H_2:=2|a| \gamma \int_s^{\tau^*+s} \left\|\Delta \left(u_{\notparallel}^3+3\langle u \rangle^2 u_{\notparallel}+3\langle u\rangle u_{\notparallel}^2 \right) \right\|_{L^2} d\tau 
$$
and
$$
H_3:=2|b|\gamma \int_s^{\tau^*+s} \left\|\Delta \left(u_{\notparallel}^2+2\langle u \rangle u_{\notparallel} \right) \right\|_{L^2} d\tau. 
$$

\medskip

\textit{Estimate of $H_1$.} By Proposition \ref{20210823prop01}, we have
$$
H_1 \le 10e^{-\lambda_\gamma \tau^*} \|u_{\notparallel}(s)\|_{L^2}=  \frac{10}{e^4} \|u_{\notparallel}(s)\|_{L^2}. 
$$

\medskip

The estimates of $H_2$ and $H_3$ are the most technical parts of this paper. For simplicity, we would like to first collect all the Gagliardo–Nirenberg's inequalities that one might need for these estimates. 

\medskip

$\bullet$ Gagliardo–Nirenberg's inequalities in 1D:
\begin{equation} \label{GN1-1}
\|\langle u \rangle\|_{L_y^\infty} \le C \left\| \partial_y^2 \langle u \rangle \right\|_{L_y^2}^{\frac{1}{4}} \left\| \langle u \rangle \right\|_{L_y^2}^{\frac{3}{4}}, \tag{$G_{1, 1}$} 
\end{equation}
and
\begin{equation} \label{GN1-2}
\left\|\partial_y \langle u \rangle \right\|_{L_y^4} \le C \left\|\partial_y^2 \langle u \rangle \right\|_{L_y^2}^{\frac{5}{8}} \left\| \langle u \rangle \right\|_{L_y^2}^{\frac{3}{8}}. \tag{$G_{1, 2}$} 
\end{equation}

$\bullet$ Gagliardo–Nirenberg's inequalities in 2D:
\begin{equation} \label{GN2-1} 
\left\|u_{\notparallel} \right\|_{L^\infty} \le C \left\|\Delta u_{\notparallel} \right\|_{L^2}^{\frac{1}{2}} \|u_{\notparallel}\|_{L^2}^{\frac{1}{2}} \tag{$G_{2, 1}$} 
\end{equation}
and
\begin{equation} \label{GN2-2} 
\left\| \nabla u_{\notparallel} \right\|_{L^4} \le C \left\|\Delta u_{\notparallel} \right\|_{L^2}^{\frac{3}{4}} \left\|u_{\notparallel} \right\|_{L^2}^{\frac{1}{4}}. \tag{$G_{2, 2}$} 
\end{equation}

\medskip

\textit{Estimate of $H_2$.}  By triangle inequality, we further bound $H_2$ as follows:
\begin{equation} \label{20210909eq01}
H_2 \le H_{2, 1}+H_{2, 2}+H_{2, 3},
\end{equation} 
where 
$$
H_{2, 1}:=2|a|\gamma \int_s^{\tau^*+s} \left\|\Delta \left(u_{\notparallel}^3 \right) \right\|_{L^2} d\tau,
\quad H_{2, 2}:=6|a|\gamma \int_s^{\tau^*+s} \left\|\Delta \left( \langle u \rangle^2 u_{\notparallel} \right) \right\|_{L^2} d\tau
$$
and
$$
H_{2, 3}:=6|a|\gamma \int_s^{\tau^*+s} \left\|\Delta \left(\langle u \rangle u_{\notparallel}^2 \right) \right\|_{L^2} d\tau.
$$

\medskip

\textbf{Estimate of $H_{2, 1}$.} Note that 
$$
\Delta(u_{\notparallel}^3)=3u_{\notparallel}^2 \Delta u_{\notparallel}+6u_{\notparallel} \left|\nabla u_{\notparallel} \right|^2.
$$
Therefore, by the Gagliardo–Nirenberg's inequalities \eqref{GN2-1} and \eqref{GN2-2}, we have
\begin{eqnarray} \label{20210909eq21}
H_{2, 1}%
&=& 2|a|\gamma \int_s^{\tau^*+s} \left\|3u_{\notparallel}^2 \Delta u_{\notparallel}+6u_{\notparallel} \left| \nabla u_{\notparallel} \right|^2 \right\|_{L^2} d\tau \nonumber \\
&\le& 6|a|\gamma \int_s^{\tau^*+s} \left\|u_{\notparallel}  \right\|_{L^\infty}^2 \left\|\Delta u_{\notparallel} \right\|_{L^2} d\tau+12|a| \gamma \int_s^{\tau^*+s} \left\|u_{\notparallel} \right\|_{L^\infty} \left\|\nabla u_{\notparallel} \right\|_{L^4}^2 d\tau  \nonumber \\
&\le& {\bf L} |a| \gamma \int_s^{\tau^*+s} \left\|\Delta u_{\notparallel} \right\|_{L^2}^2 \left\|u_{\notparallel} \right\|_{L^2} d\tau \nonumber \\
&\le&  \frac{20{\bf L}|a| \|u_{\notparallel}(s) \|_{L^2}}{\varepsilon} \cdot \left(\varepsilon  \gamma \int_s^{\tau^*+s} \left\|\Delta u_{\notparallel}(\tau) \right\|_{L^2}^2 d\tau \right) \nonumber \\
&\le& \frac{200 {\bf L}|a|}{\varepsilon} \cdot \|u_{\notparallel}(s) \|_{L^2}^3 \le \frac{80000 {\bf L}|a| \|u_{\notparallel}(0)\|_{L^2}^2}{\varepsilon} \cdot \|u_{\notparallel}(s)\|_{L^2} \le \frac{\|u_{\notparallel}(s)\|_{L^2}}{20},
\end{eqnarray}
where in the second last estimate above, we have used the assumption \eqref{20211018eq01} for some dimensional constant ${\bf L}>0$ and the fact that $\|u_{\notparallel}(s)\|_{L^2} \le 20 \|u_{\notparallel}(0)\|_{L^2}$, which is a consequence of the first estimate in the bootstrap assumptions. 

\medskip

\textbf{Estimate of $H_{2, 2}$.} A simple computation gives 
$$
\Delta \left( \langle u \rangle^2 u_{\notparallel} \right)=\langle u \rangle^2 \Delta u_{\notparallel}+2 \left|\partial_y \langle u\rangle \right|^2 u_{\notparallel}+2\langle u \rangle \partial_y^2 \langle u \rangle u_{\notparallel}+4 \langle u\rangle \partial_y \langle u\rangle \partial_y u_{\notparallel}. 
$$
Therefore, by the Gagliardo–Nirenberg's inequalities \eqref{GN1-1}, \eqref{GN1-2}, \eqref{GN2-1} and \eqref{GN2-2}, we have
\begin{eqnarray} \label{20210909eq20} 
H_{2, 2}%
& \le & 6|a|\gamma \int_s^{\tau^*+s} \left\|\langle u \rangle^2 \Delta u_{\notparallel} \right\|_{L^2} d\tau+ 12|a|\gamma \int_s^{\tau^*+s} \left\| \left(\partial_y \langle u\rangle \right)^2 u_{\notparallel} \right\|_{L^2} d\tau \nonumber \\
&& +12|a| \gamma \int_s^{\tau^*+s} \left\| \langle u \rangle \partial_y^2 \langle u \rangle u_{\notparallel} \right\|_{L^2} d\tau+24|a| \gamma \int_s^{\tau^*+s} \left\|\langle u \rangle \partial_y \langle u \rangle \partial_y u_{\notparallel} \right\|_{L^2} d\tau \nonumber\\
&\le& 6|a|\gamma \int_s^{\tau^*+s} \left\|\langle u\rangle \right\|_{L_y^\infty}^2 \left\|\Delta u_{\notparallel} \right\|_{L^2} d\tau+12|a| \gamma \int_s^{\tau^*+s} \left\|u_{\notparallel} \right\|_{L^\infty} \left\|\partial_y \langle u \rangle \right\|_{L_y^4}^2 d\tau \nonumber\\
&&+ 12|a|\gamma \int_s^{\tau^*+s} \left\|\langle u\rangle \right\|_{L_y^\infty} \left\|u_{\notparallel} \right\|_{L^\infty} \left\|\partial_y^2 \langle u\rangle \right\|_{L^2} d\tau \nonumber \\
&& +24|a|\gamma \int_s^{\tau^*+s} \left\| \langle u\rangle \right\|_{L_y^\infty} \left\|\partial_y \langle u\rangle \right\|_{L_y^4} \left\|\nabla u_{\notparallel} \right\|_{L^4} d\tau  \nonumber \\
&\le& H_{2, 2, 1}+H_{2, 2, 2}+H_{2, 2, 3}, 
\end{eqnarray}
where
$$
H_{2, 2, 1}:=C\gamma \int_s^{\tau^*+s} \left\| \partial_y^2 \langle u \rangle \right\|_{L_y^2}^{\frac{1}{2}} \left\|\langle u\rangle\right\|_{L_y^2}^{\frac{3}{2}} \left\|\Delta u_{\notparallel} \right\|_{L^2} d\tau, 
$$
$$
H_{2, 2, 2}:=C\gamma \int_s^{\tau^*+s} \left\|\Delta u_{\notparallel} \right\|_{L^2}^{\frac{1}{2}} \left\|u_{\notparallel} \right\|_{L^2}^{\frac{1}{2}} \left\|\partial_y^2 \langle u \rangle \right\|_{L_y^2}^{\frac{5}{4}} \left\|\langle u \rangle \right\|_{L_y^2}^{\frac{3}{4}} d\tau, 
$$
and
$$
H_{2, 2, 3}:=C\gamma \int_s^{\tau^*+s} \left\|\partial_y^2 \langle u \rangle \right\|_{L_y^2}^{\frac{7}{8}} \left\|\langle u \rangle \right\|_{L_y^2}^{\frac{9}{8}} \left\|\Delta u_{\notparallel} \right\|_{L^2}^{\frac{3}{4}} \left\|u_{\notparallel} \right\|_{L^2}^{\frac{1}{4}} d\tau. 
$$
Note that here we have used the estimate 
\begin{eqnarray*}
&& 12|a| \gamma \int_s^{\tau^*+s} \left\|u_{\notparallel} \right\|_{L^\infty} \left\|\partial_y \langle u \rangle \right\|_{L_y^4}^2 d\tau \\
&& \quad \quad \quad  \quad \quad \quad +12 |a| \gamma \int_s^{\tau^*+s} \left\|\langle u\rangle \right\|_{L_y^\infty} \left\|u_{\notparallel} \right\|_{L^\infty} \left\|\partial_y^2 \langle u\rangle \right\|_{L_y^2} d\tau  \le H_{2, 2, 2}. 
\end{eqnarray*}

\medskip

\underline{Estimate of $H_{2, 2, 1}$.} By the bootstrap assumptions and Proposition \ref{avgprop}, we see that
\begin{eqnarray*}
H_{2, 2, 1}%
&\le& C\gamma \int_s^{\tau^*+s} \left\|\partial_y^2 \langle u \rangle \right\|_{L_y^2}^{\frac{1}{2}} \left\|\Delta u_{\notparallel} \right\|_{L^2} d\tau \\
&\le& C \gamma \left(\int_s^{\tau^*+s} d\tau \right)^{\frac{1}{4}}  \left(\int_s^{\tau^*+s} \left\|\partial_y^2 \langle u\rangle \right\|_{L_y^2}^2 d\tau \right)^{\frac{1}{4}} \left(\int_s^{\tau^*+s} \|\Delta u_{\notparallel} \|_{L^2}^2 d\tau \right)^{\frac{1}{2}}  \\
&\le& C \cdot \left(\frac{\gamma}{\lambda_{\gamma}} \right)^{\frac{1}{4}} \left\|u_{\notparallel}(s) \right\|_{L^2}. 
\end{eqnarray*}

\medskip

\underline{Estimate of $H_{2, 2, 2}$.} Using the bootstrap assumptions and Proposition \ref{avgprop} again, we have
\begin{eqnarray*}
H_{2, 2, 2}%
&\le& C\gamma \left\|u_{\notparallel}(s) \right\|_{L^2}^{\frac{1}{2}} \int_s^{\tau^*+s} \left\|u_{\notparallel} \right\|_{L^2}^{\frac{1}{2}} \left\|\partial_y^2 \langle u\rangle \right\|_{L_y^2}^{\frac{5}{4}} d\tau \\
&\le& C \gamma \left\|u_{\notparallel}(s) \right\|_{L^2}^{\frac{1}{2}} \left(\int_s^{\tau^*+s} d\tau \right)^{\frac{1}{8}} \left( \int_s^{\tau^*+s} \left\|\Delta u_{\notparallel} \right\|_{L^2}^2 d\tau \right)^{\frac{1}{4}} \\
&& \quad \quad \quad \quad \quad  \cdot \left(\int_s^{\tau^*+s} \left\|\partial_y^2 \langle u \rangle \right\|_{L^2}^2 d\tau \right)^{\frac{5}{8}} \\
&\le& C \cdot \left(\frac{\gamma}{\lambda_{\gamma}} \right)^{\frac{1}{8}} \left\|u_{\notparallel}(s)\right\|_{L^2}.
\end{eqnarray*}

\underline{Estimate of $H_{2, 2, 3}$.} Similarly, we have
\begin{eqnarray*}
H_{2, 2, 3}%
&\le& C\gamma  \|u_{\notparallel}(s)\|_{L^2}^{\frac{1}{4}} \int_s^{\tau^*+s} \left\|\partial_y^2 \langle u \rangle \right\|_{L_y^2}^{\frac{7}{8}} \left\|\Delta u_{\notparallel} \right\|_{L^2}^{\frac{3}{4}} d\tau \\
&\le& C \left\|u_{\notparallel}(s) \right\|_{L^2}^{\frac{1}{4}} \left(\int_s^{\tau^*+s} d\tau \right)^{\frac{3}{16}} \left( \int_s^{\tau^*+s} \left\|\partial_y^2 \langle u \rangle \right\|_{L_y^2}^2 d\tau \right)^{\frac{7}{16}} \\
&& \quad \quad \quad \quad \quad \cdot \left(\int_s^{\tau^*+s} \left\|\Delta u_{\notparallel} \right\|_{L^2}^2 d\tau \right)^{\frac{3}{8}} \\
&\le& C \cdot \left(\frac{\gamma}{\lambda_{\gamma}} \right)^{\frac{3}{16}} \left\|u_{\notparallel}(s) \right\|_{L^2}.
\end{eqnarray*}

To this end, combining all the estimates of $H_{2, 2, 1}, H_{2, 2, 2}$ and $H_{2, 2, 3}$ with \eqref{20210909eq20}, we conclude that
\begin{equation} \label{20210909eq22}
H_{2, 2} \le C \cdot \left[ \left(\frac{\gamma}{\lambda_{\gamma}} \right)^{\frac{1}{4}}+  \left(\frac{\gamma}{\lambda_{\gamma}} \right)^{\frac{1}{8}}+ \left(\frac{\gamma}{\lambda_{\gamma}} \right)^{\frac{3}{16}} \right] \cdot \|u_{\notparallel}(s)\|_{L^2}. 
\end{equation} 

\textbf{Estimate of $H_{2, 3}$.} First of all, we note that
$$
\Delta \left( \langle u\rangle u_{\notparallel}^2 \right)= 2\langle u\rangle \left|\nabla u_{\notparallel} \right|^2+2 \langle u \rangle u_{\notparallel} \Delta u_{\notparallel}+u_{\notparallel}^2 \partial_y^2 \langle u \rangle+4\partial_y \langle u \rangle u_{\notparallel} \partial_y u_{\notparallel}. 
$$
This gives
\begin{eqnarray*}
H_{2, 3}%
&\le& 12|a|\gamma \int_s^{\tau^*+s} \left\| \langle u \rangle \left|\nabla u_{\notparallel} \right|^2 \right\|_{L^2} d\tau+12|a| \gamma \int_s^{\tau^*+s} \left\|\langle u \rangle u_{\notparallel} \Delta u_{\notparallel} \right\|_{L^2} d\tau \\
&& +6|a|\gamma \int_s^{\tau*+s} \left\|u_{\notparallel} \partial_y^2 \langle u \rangle \right\|_{L^2}d\tau+24|a| \gamma \int_s^{\tau^*+s} \left\|\partial_y \langle u \rangle u_{\notparallel} \partial_y u_{\notparallel} \right\|_{L^2} d\tau. 
\end{eqnarray*}
Using the Gagliardo–Nirenberg's inequalities \eqref{GN1-1}, \eqref{GN1-2}, \eqref{GN2-1} and \eqref{GN2-2}, we can further bound the last term above by 
\begin{eqnarray} \label{20210909eq23}
&& 12 |a| \gamma \int_s^{\tau^*+s} \left\|\langle u\rangle \right\|_{L_y^\infty} \left\|\nabla u_{\notparallel} \right\|_{L^4}^2 d\tau \nonumber +6|a|\gamma \int_s^{\tau^*+s} \left\|u_{\notparallel}\right\|_{L^\infty}^2 \left\|\partial_y^2 \langle u \rangle \right\|_{L_y^2} d\tau \nonumber \\
&& \quad \quad +12|a|\gamma \int_s^{\tau^*+s} \left\|\langle u\rangle \right\|_{L_y^\infty} \left\|u_{\notparallel} \right\|_{L^\infty} \left\|\Delta u_{\notparallel} \right\|_{L^2} d\tau \nonumber \\ 
&& \quad \quad +24|a| \gamma \int_s^{\tau^*+s} \left\|u_{\notparallel} \right\|_{L^\infty} \left\|\partial_y \langle u\rangle \right\|_{L_y^4} \left\|\partial_y u_{\notparallel} \right\|_{L^4} d\tau \nonumber \\
&& \le  H_{2, 3, 1}+H_{2, 3, 2}+H_{2, 3, 3},
\end{eqnarray} 
where 
$$
H_{2, 3, 1}:=C\gamma \int_s^{\tau^*+s} \left\|\partial_y^2 \langle u\rangle \right\|_{L_y^2}^{\frac{1}{4}} \left\| \langle u \rangle \right\|_{L_y^2}^{\frac{3}{4}} \left\|\Delta u_{\notparallel} \right\|_{L^2}^{\frac{3}{2}} \left\|u_{\notparallel} \right\|_{L^2}^{\frac{1}{2}} d\tau,
$$
$$
H_{2, 3, 2}:={\bf L'} |a| \gamma \int_s^{\tau^*+s} \left\|\Delta u_{\notparallel} \right\|_{L^2} \left\|u_{\notparallel} \right\|_{L^2} \left\|\partial_y^2
 \langle u\rangle \right\|_{L^2} d\tau 
 $$
and
$$
H_{2, 3, 3}:=C\gamma \int_s^{\tau^*+s} \left\|\Delta u_{\notparallel} \right\|_{L^2}^{\frac{5}{4}} \left\|u_{\notparallel} \right\|_{L^2}^{\frac{3}{4}} \left\|\partial_y^2 \langle u\rangle \right\|_{L_y^2}^{\frac{5}{8}} \left\|\langle u\rangle \right\|_{L_y^2}^{\frac{3}{8}} d\tau.
$$
Here, ${\bf L'}>0$ is again some dimensional constant, and in the last estimate of \eqref{20210909eq23}, we have used the estimate 
\begin{eqnarray*}
&& 12|a| \gamma \int_s^{\tau^*+s} \left\|\langle u\rangle \right\|_{L_y^\infty} \left\|\nabla u_{\notparallel} \right\|_{L^4}^2 d\tau \\
&& \quad \quad \quad \quad +12 |a|\gamma \int_s^{\tau^*+s} \left\|\langle u\rangle \right\|_{L_y^\infty} \left\|u_{\notparallel} \right\|_{L^\infty} \left\|\Delta u_{\notparallel} \right\|_{L^2} d\tau \le H_{2, 3, 1}. 
\end{eqnarray*}

\underline{Estimate of $H_{2, 3, 1}$.} By the bootstrap assumptions and Proposition \ref{avgprop}, we have
\begin{eqnarray*}
H_{2, 3, 1}%
&\le& C\gamma \left\|u_{\notparallel}(s) \right\|_{L^2}^{\frac{1}{2}} \int_s^{\tau^*+s} \left\|\partial_y^2 \langle u \rangle \right\|_{L_y^2}^{\frac{1}{4}} \left\|\Delta u_{\notparallel} \right\|_{L^2}^{\frac{3}{2}} d\tau \\
&\le& C\gamma \left\|u_{\notparallel}(s) \right\|_{L^2}^{\frac{1}{2}} \left(\int_s^{\tau^*+s} d\tau \right)^{\frac{1}{8}} \left(\int_s^{\tau^*+s} \left\|\partial_y^2 \langle u\rangle \right\|_{L_y^2}^2 d\tau \right)^{\frac{1}{8}} \\
&&  \quad \quad \quad \quad \cdot \left(\int_s^{\tau^*+s} \left\|\Delta u_{\notparallel} \right\|_{L^2}^2 \right)^{\frac{3}{4}} \\
&\le& C \cdot \left(\frac{\gamma}{\lambda_\gamma} \right)^{\frac{1}{8}} \left\|u_{\notparallel}(s) \right\|_{L^2}^2 \le  C\left\|u_{\notparallel}(0)\right\|_{L^2}\cdot \left(\frac{\gamma}{\lambda_\gamma} \right)^{\frac{1}{8}} \left\|u_{\notparallel}(s) \right\|_{L^2}. 
\end{eqnarray*}

\underline{Estimate of $H_{2, 3, 2}$.} Using the bootstrap assumptions and Proposition \ref{avgprop} again, we have
\begin{eqnarray*}
H_{2, 3, 2}%
&\le& {\bf L'} |a| \gamma \left\|u_{\notparallel}(s)\right\|_{L^2}  \int_s^{\tau^*+s} \left\|\Delta u_{\notparallel} \right\|_{L^2} \left\|\partial_y^2 \langle u \rangle \right\|_{L_y^2} d\tau \\
&\le& \frac{20{\bf L'} |a| \left\|u_{\notparallel}(s) \right\|_{L^2}}{\varepsilon} \cdot  \left(\varepsilon  \gamma \int_s^{\tau^*+s} \left\|\Delta u_{\notparallel} \right\|_{L^2}^2 d\tau \right)^{\frac{1}{2}} \\
&&  \quad \quad \quad \quad \quad \quad \quad \quad \quad \quad \cdot \left(\varepsilon  \gamma \int_s^{\tau^*+s} \left\|\partial_y^2 \langle u\rangle \right\|^2_{L^2} d\tau \right)^{\frac{1}{2}} \\
&\le& \frac{200 {\bf L'} |a| {\bf B_3}^{\frac{1}{2}} \left\|u_{\notparallel}(s) \right\|^2_{L^2}}{\varepsilon} \\
&\le& \frac{4000 {\bf L'} |a| {\bf B_3}^{\frac{1}{2}} \left\|u_{\notparallel}(0) \right\|_{L^2}}{\varepsilon} \cdot \|u_{\notparallel}(s) \|_{L^2} \le \frac{\left\|u_{\notparallel}(s) \right\|_{L^2}}{20}.
\end{eqnarray*}

\underline{Estimate of $H_{2, 3, 3}$.} Similarly, we have  
\begin{eqnarray*}
H_{2, 3, 3}%
&\le& C \gamma \left\|u_{\notparallel}(s) \right\|_{L^2}^{\frac{3}{4}} \int_s^{\tau^*+s} \left\|\Delta u_{\notparallel}\right\|_{L^2}^{\frac{5}{4}} \left\|\partial_y^2 \langle u \rangle \right\|_{L_y^2}^{\frac{5}{8}} d\tau \\ 
&\le& C \left\|u_{\notparallel}(s)\right\|_{L^2}^{\frac{3}{4}} \left(\int_s^{\tau^*+s} d\tau \right)^{\frac{1}{16}} \left(\int_s^{\tau^*+s} \left\|\Delta u_{\notparallel} \right\|_{L^2}^2 d\tau \right)^{\frac{5}{8}} \\
&&  \quad \quad \quad \quad \cdot \left(\int_s^{\tau^*+s} \left\|\partial_y^2 \langle u \rangle \right\|_{L_y^2}^2 \right)^{\frac{5}{16}} \\
&\le& C \cdot \left(\frac{\gamma}{\lambda_\gamma} \right)^{\frac{1}{16}} \left\|u_{\notparallel}(s) \right\|_{L^2}^2 \le  C\left\|u_{\notparallel}(0)\right\|_{L^2}\cdot \left(\frac{\gamma}{\lambda_\gamma} \right)^{\frac{1}{16}} \left\|u_{\notparallel}(s) \right\|_{L^2}. 
\end{eqnarray*}

Combining all the estimates of $H_{2, 3, 1}, H_{2, 3, 2}$ and $H_{2, 3, 3}$ together with \eqref{20210909eq23}, this gives
\begin{equation} \label{20210910eq01}
 H_{2, 3} \le \frac{\|u_{\notparallel}(s)\|_{L^2}}{20} + C\left\|u_{\notparallel}(0) \right\|_{L^2} \left[\left(\frac{\gamma}{\lambda_{\gamma}} \right)^{\frac{1}{8}} +\left(\frac{\gamma}{\lambda_{\gamma}} \right)^{\frac{1}{16}} \right] \left\|u_{\notparallel}(s) \right\|_{L^2}.
\end{equation} 

\medskip

Finally, by \eqref{20210909eq01}, \eqref{20210909eq21}, \eqref{20210909eq22} and \eqref{20210910eq01}, we get
\begin{eqnarray} \label{20210910eq02}
H_2%
&\le& \frac{\|u_{\notparallel}(s)\|_{L^2}}{10}+ C \left[ \left(\frac{\gamma}{\lambda_{\gamma}} \right)^{\frac{1}{4}}+  \left(\frac{\gamma}{\lambda_{\gamma}} \right)^{\frac{1}{8}}+ \left(\frac{\gamma}{\lambda_{\gamma}} \right)^{\frac{3}{16}} \right] \cdot \|u_{\notparallel}(s)\|_{L^2} \nonumber \\
&&\quad \quad  + C\left\|u_{\notparallel}(0) \right\|_{L^2} \left[\left(\frac{\gamma}{\lambda_{\gamma}} \right)^{\frac{1}{8}} +\left(\frac{\gamma}{\lambda_{\gamma}} \right)^{\frac{1}{16}} \right] \left\|u_{\notparallel}(s) \right\|_{L^2}.
\end{eqnarray}
The estimate of $H_2$ is complete. 

\medskip

\textit{Estimate of $H_3$.} The estimate of $H_3$ is similar to the one of $H_2$, and hence we only sketch the proof here. Note that
$$
\Delta \left(u_{\notparallel}^2 \right)=2\left|\nabla u_{\notparallel} \right|^2+2u_{\notparallel} \Delta u_{\notparallel}
$$
and
$$
\Delta \left( \langle u \rangle u_{\notparallel} \right)=\langle u \rangle \Delta u_{\notparallel}+\partial_y^2 \langle u \rangle u_{\notparallel}+2\partial_y \langle u \rangle \partial_y u_{\notparallel}.
$$
Therefore,
\begin{eqnarray} \label{20210910eq03}
H_3%
&\le& 2|b|\gamma \int_s^{\tau^*+s} \left\|\Delta \left(u_{\notparallel}^2 \right) \right\|_{L^2} d\tau+4|b|\gamma \int_s^{\tau^*+s} \left\|\Delta \left(\langle u \rangle u_{\notparallel} \right) \right\|_{L^2} d\tau \nonumber \\
&\le& 4|b|\gamma \int_s^{\tau^*+s} \left\|\nabla u_{\notparallel} \right\|_{L^4}^2 d\tau+4|b|\gamma \int_s^{\tau^*+s} \left\|u_{\notparallel} \Delta u_{\notparallel} \right\|_{L^2} d\tau \nonumber \\
&&+4|b|\gamma \int_s^{\tau^*+s} \left\|\langle u\rangle u_{\notparallel} \right\|_{L^2} d\tau+4|b|\gamma \int_s^{\tau^*+s} \left\|\partial_y^2 \langle u \rangle u_{\notparallel} \right\|_{L^2} d\tau \nonumber \\
&&+8|b|\gamma \int_s^{\tau^*+s} \left\|\partial_y \langle u \rangle \partial_y u_{\notparallel} \right\|_{L^2} d\tau \nonumber \\
&\le& 4|b|\gamma \int_s^{\tau^*+s} \left\|\nabla u_{\notparallel} \right\|_{L^4}^2 d\tau+4|b|\gamma \int_s^{\tau^*+s} \left\|u_{\notparallel} \right\|_{L^\infty} \left\| \Delta u_{\notparallel} \right\|_{L^2} d\tau \nonumber \\
&&+4|b|\gamma \int_s^{\tau^*+s} \left\|\langle u \rangle \right\|_{L_y^\infty} \left\| u_{\notparallel} \right\|_{L^2} d\tau+4|b|\gamma \int_s^{\tau^*+s} \left\| u_{\notparallel} \right\|_{L^\infty} \left\|\partial_y^2 \langle u \rangle \right\|_{L_y^2} d\tau \nonumber \\
&&+8|b|\gamma \int_s^{\tau^*+s} \left\|\partial_y \langle u \rangle \right\|_{L_y^4} \left\|\partial_y u_{\notparallel} \right\|_{L^4} d\tau.
\end{eqnarray}
We bound the right hand side of \eqref{20210910eq03} again by using the bootstrap assumptions and Proposition \ref{avgprop}, which gives
\begin{equation} \label{20210910eq04}
H_3 \le C \left[\left(\frac{\gamma}{\lambda_{\gamma}} \right)^{\frac{3}{8}}+\left(\frac{\gamma}{\lambda_{\gamma}} \right)^{\frac{1}{4}}+\left(\frac{\gamma}{\lambda_{\gamma}} \right)^{\frac{5}{16}} \right] \left\|u_{\notparallel}(s) \right\|_{L^2}.
\end{equation}
The estimate of $H_4$ is complete. 

\medskip

Finally, we combine all the estimates for $H_1, H_2, H_3$ and $H_4$ together with \eqref{20210908eq10}, and this gives
\begin{eqnarray} \label{20210910eq11}
\left\|u_{\notparallel}(\tau^*+s) \right\|_{L^2}%
&\le& \left(\frac{10}{e^4}+\frac{1}{10} \right) \left\|u_{\notparallel}(s) \right\|_{L^2} \nonumber \\
&&+C\left(1+\left\|u_{\notparallel}(0) \right\|_{L^2} \right) \cdot \calF \left(\frac{\gamma}{\lambda_{\gamma}} \right) \left\|u_{\notparallel}(s) \right\|_{L^2},
\end{eqnarray}
where $\calF$ is a function on $\R_+$ satisfying $\calF(\alpha) \to 0$ as $\alpha \to 0$ (the explicit formula for $\calF$ can be derived directly from the estimates of $H_2, H_3$ and $H_4$). Note that $\frac{10}{e^4}+\frac{1}{10}<\frac{1}{e}$. Therefore, the desired estimate \eqref{20210905eq02} then follows if we pick a $\gamma_3>0$ sufficiently small, which only depends on $\varepsilon, a, b, B, \left\|\langle u \rangle(0) \right\|_{L^2}$ and $\left\| u_{\notparallel}(0) \right\|_{L^2}$, such that for any $0<\gamma \le \gamma_3$, one has
$$
C\left(1+\left\|u_{\notparallel}(0) \right\|_{L^2} \right) \cdot \calF \left(\frac{\gamma}{\lambda_{\gamma}} \right)  \le \frac{1}{e}-\frac{10}{e^4}-\frac{1}{10}.
$$
The proof is complete. 
\end{proof}

We are now ready to improve the first estimate in the bootstrap assumptions.

\begin{prop}
Assume the bootstrap assumptions, \eqref{20211017eq04} , \eqref{20211018eq01} and \eqref{20211026eq100}. Then there exists a $\gamma^*>0$, which only depends on $\varepsilon, a, b, \left\|\langle u \rangle(0) \right\|_{L^2}, \left\|u_{\notparallel}(0) \right\|_{L^2}$ and any dimensional constants, such that for any $0<\gamma \le \gamma^*$ and $0 \le s \le t \le t_0$, 
$$
\left\|u_{\notparallel}(t) \right\|_{L^2} \le 15 e^{\frac{-  \lambda_{\gamma}(t-s)}{4}} \left\|u_{\notparallel}(s) \right\|_{L^2}.
$$
\end{prop}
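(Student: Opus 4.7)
The plan is to bootstrap the two results already proved in this section: Proposition \ref{bootestprop2} gives short-time stability (growth by at most a factor $\tfrac{3}{2}$), while Proposition \ref{bootestprop3} gives a fixed geometric decay by $e^{-1}$ over each time window of length $\tau^* := 4/\lambda_\gamma$. The final estimate would then be obtained by chaining enough copies of Proposition \ref{bootestprop3} together and absorbing the leftover portion via Proposition \ref{bootestprop2}. Accordingly I would set $\gamma^* := \min\{\gamma_2, \gamma_3\}$, where $\gamma_2$ and $\gamma_3$ are the thresholds produced in Propositions \ref{bootestprop2} and \ref{bootestprop3} respectively; then for any $0 < \gamma \le \gamma^*$ both propositions are simultaneously available on $[0, t_0]$.

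Fix $0 \le s \le t \le t_0$. I would write $t - s = n\tau^* + r$ with $n \in \mathbb{Z}_{\ge 0}$ and $0 \le r < \tau^*$. When $n \ge 1$, iterating Proposition \ref{bootestprop3} on the consecutive windows $[s + k\tau^*, s + (k+1)\tau^*]$ for $k = 0, 1, \ldots, n-1$ (all contained in $[0, t_0]$ since $s + n\tau^* \le t \le t_0$) would yield
$$
\left\|u_{\notparallel}(s + n\tau^*)\right\|_{L^2} \le e^{-n}\left\|u_{\notparallel}(s)\right\|_{L^2},
$$
and a single application of Proposition \ref{bootestprop2} on the residual interval $[s + n\tau^*, t]$ of length $r < \tau^*$ would give $\|u_{\notparallel}(t)\|_{L^2} \le \tfrac{3}{2}\, e^{-n} \|u_{\notparallel}(s)\|_{L^2}$; the case $n = 0$ is handled directly by Proposition \ref{bootestprop2}. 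Using the inequality $n \ge (t-s)/\tau^* - 1 = \lambda_\gamma (t-s)/4 - 1$, one obtains $e^{-n} \le e\cdot e^{-\lambda_\gamma(t-s)/4}$, and therefore
$$
\left\|u_{\notparallel}(t)\right\|_{L^2} \le \tfrac{3e}{2}\, e^{-\lambda_\gamma(t-s)/4} \left\|u_{\notparallel}(s)\right\|_{L^2}.
$$
Since $\tfrac{3e}{2} \approx 4.08 < 15$, the stated constant is comfortably satisfied.

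I do not anticipate any serious obstacle here: all of the analytic content has been packaged into Propositions \ref{bootestprop2} and \ref{bootestprop3}, and the present step is a purely combinatorial bookkeeping argument that converts per-window decay into uniform exponential decay across $[s, t]$. The generous constant $15$ (versus the effective $\tfrac{3e}{2}$) leaves plenty of slack and, crucially, the argument imposes no further smallness condition on $\gamma$ beyond those already extracted from $\gamma_2$ and $\gamma_3$, so the choice $\gamma^* = \min\{\gamma_2, \gamma_3\}$ suffices.
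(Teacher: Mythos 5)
Your proof is correct and follows essentially the same argument as the paper: split $[s,t]$ into $n$ windows of length $\tau^*=4/\lambda_\gamma$ plus a residual of length $<\tau^*$, apply Proposition \ref{bootestprop3} on each full window, use Proposition \ref{bootestprop2} on the leftover, and observe $\tfrac{3e}{2}<15$. The paper's choice of $\gamma^*$ also folds in $\gamma_0,\gamma_1$, but since $\gamma_2,\gamma_3\le\gamma_0$ and $\gamma_1$ is only needed for the other bootstrap improvement, your $\gamma^*=\min\{\gamma_2,\gamma_3\}$ suffices for this proposition.
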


\begin{proof}
Let $0<\gamma^*<\min \left\{\gamma_0, \gamma_1, \gamma_2, \gamma_3 \right\}$, so that all the restrictions in Proposition \ref{avgprop}, Proposition \ref{bootestprop1}, Proposition \ref{bootestprop2} and Proposition \ref{bootestprop3} are satisfied. If $t_0<\tau^*$, then we have $t-s \le \tau^*=\frac{4}{\lambda_\gamma}$, and hence by Proposition \ref{bootestprop2},
$$
\left\|u_{\notparallel}(t) \right\|_{L^2} \le \frac{3}{2} \left\|u_{\notparallel}(s) \right\|_{L^2} \le \frac{15}{e} \left\|u_{\notparallel}(s)\right\|_{L^2} \le 15 e^{-\frac{\lambda_{\gamma}(t-s)}{4}} \left\|u_{\notparallel}(s) \right\|_{L^2}.
$$
If $t_0 \ge \tau^*$, then for any $0 \le s \le t \le t_0$, we can find some $n \in \Z_+$, such that $t \in [n \tau^*+s, (n+1)\tau^*+s)$. Then by Proposition \ref{bootestprop2} and Proposition \ref{bootestprop3}, we have
\begin{eqnarray*}
\left\|u_{\notparallel}(t) \right\|_{L^2}%
&\le& \frac{3}{2} \cdot \left\|u_{\notparallel}(n\tau^*+s) \right\|_{L^2} \le \frac{3}{2e^n} \cdot \left\|u_{\notparallel}(s) \right\|_{L^2} \\
&\le& \frac{3}{2} \cdot e^{\left(1-\frac{t-s}{\tau^*} \right)} \cdot \left\|u_{\notparallel}(s) \right\|_{L^2} \\
&=& \frac{3}{2} \cdot e^{-\frac{t-s}{\tau^*}} \cdot \left\|u_{\notparallel}(s) \right\|_{L^2} \\
&\le& 15 e^{-\frac{\lambda_\gamma(t-s)}{4}} \cdot \left\|u_{\notparallel}(s) \right\|_{L^2}.
\end{eqnarray*}
This concludes the proof of the proposition.
\end{proof}

\begin{proof} [Proof of Theorem \ref{mainthm}]
We first show that under the assumption of Theorem \ref{mainthm}, the solution to the problem \eqref{maineq02} exists globally. Note that it suffices to show that the maximal time $t_0=\infty$ in the bootstrap assumptions, while the regularity of the solution $u$ follows clearly from the bootstrap assumptions and Proposition \ref{bootestprop2}. Assume $t_0<\infty$. Then on the time interval $[0, t_0]$, we always have for $0 \le s \le t \le t_0$, 
\begin{enumerate}
    \item [(1).] $\|u_{\notparallel}(t)\|_{L^2} \le 15 e^{-\frac{ \lambda_{\gamma}(t-s)}{4}} \left\|u_{\notparallel}(s) \right\|_{L^2}$;
    \item [(2).] $\gamma \int_s^t \left\|\Delta u_{\notparallel}(\tau) \right\|_{L^2}^2 d\tau \le 5 \left\|u_{\notparallel}(s) \right\|_{L^2}^2$. 
\end{enumerate}
These estimates together with the continuity at $t_0$, we are able to find a $\epsilon>0$ sufficiently small, such that the bootstrap assumptions also hold on $[0, t_0+\epsilon]$, which contradicts to the maximality of $t_0$. The proof of the first part is complete.

Next we show that $\|u(t)\|_{L^2}$ decays exponentially. By the bootstrap assumptions and the fact that $t_0=\infty$, it suffices to show that $\|\langle u \rangle (t) \|_{L^2}$ decay exponentially, which is a consequence of a refined version of \eqref{20210829eq61}. More precisely, by \eqref{20210829eq609}, we have 
\begin{eqnarray*}
&& \frac{1}{2} \frac{d}{dt} \|\langle u \rangle\|_{L_y^2}^2+\frac{\varepsilon \gamma \lambda_1^2}{4} \|\langle u \rangle \|_{L_y^2}^2 \le  B\gamma \big(a^2\|\Delta u_{\notparallel}\|_{L^2}^2 \|u_{\notparallel}\|_{L^2}^4 \nonumber \\
&& \quad \quad \quad \quad \quad \quad \quad  \quad + \|\Delta u_{\notparallel}\|_{L^2} \|u_{\notparallel}\|_{L^2}^3+\|\Delta u_{\notparallel}\|_{L^2}^2 \|u_{\notparallel}\|_{L^2}^2 \|\langle u \rangle\|_{L_y^2}^2 \big),
\end{eqnarray*}
where we recall that $\lambda_1>0$ is the smallest positive eigenvalue of $-\Delta$ on $\T^2$. This gives
the following ODE:
\begin{eqnarray} \label{20211026eq200}
&& \frac{1}{2} \frac{d}{dt} \|\langle u \rangle\|_{L^2}^2+\left[\frac{\varepsilon \gamma \lambda_1^2}{4}-B\gamma \|\Delta u_{\notparallel}\|_{L^2}^2 \|u_{\notparallel}\|_{L^2}^2 \right] \|\langle u \rangle\|_{L_y^2}^2 \nonumber \\
&& \quad \quad \quad \quad \le  B\gamma \left(a^2\|\Delta u_{\notparallel}\|_{L^2}^2 \|u_{\notparallel}\|_{L^2}^4+ \|\Delta u_{\notparallel}\|_{L^2} \|u_{\notparallel}\|_{L^2}^3 \right). 
\end{eqnarray}
The proof then follows closely to the argument in Proposition \ref{avgprop}. Let
$$
\widetilde{\rho}(t):=\exp \left(\frac{\varepsilon \gamma \lambda_1^2 t}{4}-B\gamma \int_0^t \left\|\Delta u_{\notparallel}\right\|_{L^2}^2  \left\|u_{\notparallel} \right\|_{L^2}^2 d\tau \right)
$$
be the integrating factor and therefore
\begin{eqnarray} \label{20211027eq01}
&& \left\|\langle u \rangle(t) \right\|_{L_y^2}^2 \le \frac{\|\langle u \rangle(0) \|_{L_y^2}^2}{\widetilde{\rho}(t)} +\frac{Ba^2\gamma}{\widetilde{\rho}(t)} \int_0^t \widetilde{\rho}(\tau) \|\Delta u_{\notparallel}(t)\|_{L^2}^2 \|u_{\notparallel}(\tau)\|_{L^2}^4 d\tau \nonumber \\
&& \quad \quad \quad \quad \quad \quad \quad +\frac{B\gamma}{\widetilde{\rho}(t)} \int_0^t \widetilde{\rho}(\tau)  \|\Delta u_{\notparallel}(\tau)\|_{L^2} \|u_{\notparallel}(\tau)\|_{L^2}^3 d\tau.
\end{eqnarray}
Using \eqref{20210829eq63}, we have 
\begin{equation} \label{20211027eq02}
\frac{1}{\widetilde{\rho}(t)} \le \exp\left(-\frac{\varepsilon \gamma \lambda_1^2 t}{4}+B \left\|u_{\notparallel}(0) \right\|_{L^2} \right). 
\end{equation}
While for the two integrals in \eqref{20211027eq01}, by the bootstrap assumptions, we have 
\begin{eqnarray} \label{20211027eq03} 
&& \gamma \int_0^t \widetilde{\rho}(\tau) \left\|\Delta u_{\notparallel}(t) \right\|_{L^2}^2 \|u_{\notparallel}(\tau)\|_{L^2}^4 d\tau \nonumber \\
&& \quad \quad \quad \le  B\gamma \left\|u_{\notparallel}(0)\right\|_{L^2}^4 \int_0^t \exp \left(\frac{\varepsilon \gamma \lambda_1^2 t}{4}-\lambda_\gamma t \right) \left\|\Delta u_{\notparallel}(\tau) \right\|_{L^2}^2 d\tau 
\end{eqnarray}
and
\begin{eqnarray} \label{20211027eq04}
&& \gamma \int_0^t \widetilde{\rho}(\tau) \left\|\Delta u_{\notparallel}(\tau) \right\|_{L^2} \|u_{\notparallel}(\tau)\|_{L^2}^3 d\tau \nonumber \\
&& \quad \quad \quad B\gamma \|u_{\notparallel}(0)\|_{L^2}^3 \int_0^t \exp \left(\frac{\varepsilon \gamma \lambda_1^2 t}{4}-\frac{3\lambda_\gamma t}{4} \right) \left\|u_{\notparallel}(\tau) \right\|_{L^2}^2 d\tau.
\end{eqnarray}
Recall that $\lambda_\gamma \simeq \gamma^{\frac{2}{2+m}}$, therefore, for $\gamma$ sufficiently small, we have
$$
 \frac{\varepsilon \gamma \lambda_1^2}{4}-\frac{3\lambda_\gamma}{4} \le 0.  
$$
Therefore, \eqref{20211027eq03} and \eqref{20211027eq04} together with  \eqref{20210829eq64} and \eqref{20210829eq65}, respectively, give
\begin{equation} \label{20211027eq05}
\gamma \int_0^t \widetilde{\rho}(\tau) \left\|\Delta u_{\notparallel}(\tau) \right\|^2_{L^2} \|u_{\notparallel}(\tau)\|_{L^2}^4 d\tau \le B\|u_{\notparallel}(0)\|_{L^2}^6
\end{equation}
and 
\begin{equation} \label{20211027eq06}  
\gamma \int_0^t \widetilde{\rho}(\tau) \left\|\Delta u_{\notparallel}(\tau) \right\|_{L^2} \|u_{\notparallel}(\tau)\|_{L^2}^3 d\tau \le B\|u_{\notparallel}(0)\|_{L^2}^4
\end{equation}
Finally, combining \eqref{20211027eq01}, \eqref{20211027eq02}, \eqref{20211027eq05} and \eqref{20211027eq06}, we see that for any $t \ge 0$, 
\begin{eqnarray*}
\left\| \langle u \rangle(t) \right\|_{L_y^2}^2%
&\le&  B \exp\left(-\frac{\varepsilon \gamma \lambda_1^2 t}{4}+B\|u_{\notparallel}(0)\|_{L^2} \right) \\
&& \quad \quad \quad \cdot  \left[\left\|u_{\notparallel}(0)\right\|_{L^6}+\left\|\|u_{\notparallel}(0)\right\|_{L^4}+\left\| \langle u\rangle (0) \right\|^2_{L^2} \right].
\end{eqnarray*}
The proof is complete. 
\end{proof}


\begin{thebibliography}{aa}

\bibitem{BC17}
J. Bedrossian and M. Coti Zelati,
Enhanced dissipation, hypoellipticity, and anomalous small noise inviscid limits in shear flows,
\textit{Arch. Ration. Mech. Anal.} \textbf{224} (2017), no. 3, 1161--1204.

\bibitem{BH17}
J. Bedrossian and S. He,
Suppression of blow-up in Patlak--Keller--Segel via shear flows,
\textit{SIAM J. Math. Anal.} \textbf{49} (2017), no. 6, 4722--4766.

\bibitem{CH58}
J. W. Cahn and J. E. Hilliard,
Free energy of a nonuniform system. I. Interfacial free energy,
\textit{J. Chem. Phys.} \textbf{28} (1958), no. 2, 258--267.

\bibitem{Cahn61}
J. W. Cahn,
On spinodal decomposition,
\textit{Acta Metall.} \textbf{9} (1961), no. 9, 795--801.



\bibitem{Cahn65}
J. W. Cahn,
Phase separation by spinodal decomposition in isotropic systems,
\textit{J. Chem. Phys.} \textbf{42} (1965), 93--99.

\bibitem{CKRZ08}
P. Constantin, A. Kiselev, L. Ryzhik, and A. Zlato\v{s},
Diffusion and mixing in fluid flow,
\textit{Ann. of Math. (2)} \textbf{168} (2008), no. 2, 643--674.

\bibitem{CDE20}
M. Coti Zelati, M. G. Delgadino, and T. M. Elgindi,
On the relation between enhanced dissipation timescales and mixing rates,
\textit{Comm. Pure Appl. Math.} \textbf{73} (2020), no. 6, 1205--1244.

\bibitem{CDFM21}
M. Coti Zelati, M. Dolce, Y. Feng, and A. L. Mazzucato,
Global existence for the two-dimensional Kuramoto--Sivashinsky equation with a shear flow,
\textit{J. Evol. Equ.} \textbf{21} (2021), no. 4, 5079--5099.

\bibitem{ES86}
C. M. Elliott and S. Zheng,
On the Cahn--Hilliard equation,
\textit{Arch. Rational Mech. Anal.} \textbf{96} (1986), no. 4, 339--357.

\bibitem{Ell89}
C. M. Elliott,
The Cahn--Hilliard model for the kinetics of phase separation,
in \textit{Mathematical Models for Phase Change Problems ({\'O}bidos, 1988)},
\textit{Internat. Ser. Numer. Math.} \textbf{88}, Birkh{\"a}user, Basel, 1989, pp. 35--73.

\bibitem{FFIT19}
Y. Feng, Y. Feng, G. Iyer, and J.-L. Thiffeault,
Phase separation in the advective Cahn--Hilliard equation,
\textit{J. Nonlinear Sci.} \textbf{30} (2020), no. 6, 2821--2845.

\bibitem{FHX21}
Y. Feng, B. Hu, and X. Xu,
Suppression of epitaxial thin film growth by mixing,
\textit{J. Differential Equations} \textbf{317} (2022), 561--602.

\bibitem{FHXZ21}
Y. Feng, B. Hu, X. Xu, and Y. Zhang,
Global existence of a non-local semilinear parabolic equation with advection and applications to shear flow,
\textit{Commun. Math. Sci.} \textbf{23} (2025), no. 2, 431--464.

\bibitem{FI19}
Y. Feng and G. Iyer,
Dissipation enhancement by mixing,
\textit{Nonlinearity} \textbf{32} (2019), no. 5, 1810--1851.

\bibitem{FM19}
Y. Feng and A. L. Mazzucato,
Global existence for the two-dimensional Kuramoto--Sivashinsky equation with advection,
arXiv preprint, arXiv:2009.04029, 2020.

\bibitem{GV02}
V. A. Galaktionov and J. L. V\'azquez,
The problem of blow-up in nonlinear parabolic equations,
\textit{Discrete Contin. Dyn. Syst.} \textbf{8} (2002), 399--433.

\bibitem{He18}
S. He,
Suppression of blow-up in parabolic--parabolic Patlak--Keller--Segel via strictly monotone shear flows,
\textit{Nonlinearity} \textbf{31} (2018), no. 8, 3651--3688.

\bibitem{IXZ21}
G. Iyer, X. Xu, and A. Zlato\v{s},
Convection-induced singularity suppression in the Keller--Segel and other nonlinear PDEs,
\textit{Trans. Amer. Math. Soc.} (2021),
doi: 10.1090/tran/8195.

\bibitem{LeeKimJeongShin14}
D. Lee, J. Kim, S. Lee, Y. Choi, and J. Shin,
Physical, mathematical, and numerical derivations of the Cahn--Hilliard equation,
\textit{Comput. Mater. Sci.} \textbf{81} (2014), 216--225.

\bibitem{Peg89}
R. L. Pego,
Front migration in the nonlinear Cahn--Hilliard equation,
\textit{Proc. Roy. Soc. London Ser. A} \textbf{422} (1989), no. 1863, 261--278.

\bibitem{ZSS21a}
Y. Zhang, L. M. Smith, and S. N. Stechmann,
Effects of clouds and phase changes on fast-wave averaging: a numerical assessment,
\textit{J. Fluid Mech.} \textbf{920} (2021), A49.

\bibitem{ZSS21b}
Y. Zhang, L. M. Smith, and S. N. Stechmann,
Fast-wave averaging with phase changes: asymptotics and application to moist atmospheric dynamics,
\textit{J. Nonlinear Sci.} \textbf{31} (2021), no. 2, Paper No. 38.

\bibitem{ZSS22}
Y. Zhang, L. M. Smith, and S. N. Stechmann,
Convergence to precipitating quasi-geostrophic equations with phase changes: asymptotics and numerical assessment,
\textit{Philos. Trans. Roy. Soc. A} \textbf{380} (2022), no. 2226, Paper No. 20210030.

\bibitem{Z86}
S. Zheng,
Asymptotic behavior of solution to the Cahn--Hilliard equation,
\textit{Applicable Analysis} \textbf{23} (1986), no. 3, 165--184.



\end{thebibliography}
\end{document}